\documentclass[11pt,reqno]{amsart}
\usepackage[T1]{fontenc}
\usepackage[utf8]{inputenc}
\usepackage{lmodern}
\usepackage{amsmath,amssymb,mathtools,bm}
\usepackage[a4paper,margin=28mm]{geometry}
\usepackage{microtype}
\usepackage{enumitem,needspace}
\usepackage[hidelinks,pdfencoding=auto]{hyperref}
\numberwithin{equation}{section}
\newtheorem{theorem}{Theorem}[section]
\newtheorem{lemma}[theorem]{Lemma}
\newtheorem{proposition}[theorem]{Proposition}
\newtheorem{corollary}[theorem]{Corollary}
\theoremstyle{definition}
\newtheorem{assumption}[theorem]{Assumption}
\theoremstyle{remark}
\newtheorem{remark}[theorem]{Remark}
\newcommand{\R}{\mathbb R}
\newcommand{\E}{\mathbb E}
\newcommand{\Prob}{\mathbb P}
\newcommand{\F}{\mathcal F}
\newcommand{\thh}{\widehat\theta}
\newcommand{\bhat}{\widehat b}
\newcommand{\ahat}{\widehat a}
\newcommand{\ph}{\varphi}
\newcommand{\ups}{\upsilon}
\newcommand{\tr}{\operatorname{tr}}
\newcommand{\diag}{\operatorname{diag}}
\newcommand{\as}{\quad\text{a.s.}}
\newcommand{\norm}[1]{\lVert#1\rVert}

\newcommand{\ind}{\mathbf 1}
\newcommand{\calB}{\mathcal B}
\newcommand{\ellfun}{\mathsf L}
\setlist[enumerate]{label=(\roman*),leftmargin=*,itemsep=2pt,topsep=4pt}
\allowdisplaybreaks[1]
\title[Self-Tuning Regulation with Unknown Input Gain]{Strong Consistency, Optimal Tracking and Sharp Rates for the {\AA}str{\"o}m--Wittenmark Self-Tuning Regulator with Unknown Input Gain}
\author{Zhaobo Liu}
\address{Institute for Advanced Study, Shenzhen University, Shenzhen 518060, China}
\email{liuzhaobo@szu.edu.cn}
\subjclass[2020]{93E35, 93E24, 93C40, 93E12}
\keywords{Stochastic adaptive control, self-tuning regulation, recursive least squares, minimum-variance tracking, strong consistency, logarithm laws}
\thanks{This work was supported in part by the National Natural Science Foundation of China under Grant 12401585, the Guangdong Basic and Applied Basic Research Foundation under Grant 2024A1515011542, and the General Program of Shenzhen Natural Science Foundation under Grant JCYJ20250604181037012.}
\hypersetup{pdftitle={Strong Consistency, Optimal Tracking and Sharp Rates for the Astrom-Wittenmark Self-Tuning Regulator with Unknown Input Gain},pdfauthor={Zhaobo Liu}}
\date{}
\begin{document}
\begin{abstract}
We study strong consistency and optimal tracking of the {\AA}str{\"o}m--Wittenmark self-tuning regulator with unknown input gain. Existing results establish full parameter consistency for the unmodified recursion under additional growth conditions on reference information. Other approaches obtain performance guarantees by adjusting the estimates used in feedback or adding decaying probing signals. For a class of minimum-phase linear systems with martingale difference noise, we prove that ordinary least squares with certainty-equivalent control achieves stability and optimal tracking almost surely for each bounded reference, without reference excitation, gain adjustment or added probing. When at least two parameters are estimated, all parameter estimates are strongly consistent, with squared estimation error norm $O(\log\log\log n/\log n)$ almost surely. An exact law of the iterated logarithm under zero reference shows that this rate cannot in general be improved. We also derive a logarithm law for cumulative squared tracking residuals that makes their dependence on reference energy explicit. A chi-square limit yields asymptotically exact parameter confidence ellipsoids for each bounded reference. For consistency, we prove a logarithmic lower bound on the cumulative squared difference between an auxiliary least-squares prediction of the noise and the reference. If the gain error persisted, this bound and the actual least-squares recursion would give incompatible bounds on the same weighted squared sum. This contradiction establishes gain consistency before stability.
\end{abstract}
\maketitle
\enlargethispage{3pt}

\section{Introduction}\label{sec:introduction}
Minimum variance tracking seeks to attain the smallest achievable mean squared tracking error. In adaptive control, the plant coefficients must also be learned from observations. We study this problem for a single-input single-output system
\begin{equation*}
 A(z)y_{n+1}=B(z)u_n+w_{n+1},\qquad n\ge0,
\end{equation*}
where $y_n$, $u_n$ and $w_n$ denote the output, input and process noise, and $z$ is the backward shift. The plant polynomials are
\begin{equation*}
 A(z)=1+\sum_{i=1}^p a_i z^i,\qquad
 B(z)=\sum_{j=1}^q b_j z^{j-1}.
\end{equation*}
Here $p\ge0$ and $q\ge1$ are fixed integer order bounds. All coefficients $a_i$ and $b_j$ are unknown, with $b_1\ne0$. Given a bounded reference, the objective is to keep the average input and output energy bounded and make the average squared tracking error converge to the noise variance, the minimum attainable with known plant coefficients.

The self-tuning regulator (STR) of {\AA}str{\"o}m and Wittenmark \cite{AW73} combines ordinary unweighted least squares (LS), implemented recursively, with certainty-equivalent (CE) control. It estimates the unknown coefficients from past observations and substitutes the current estimates into the minimum variance control law. The inputs also determine subsequent estimation data, so estimation errors influence both tracking and further learning. The information conditions underlying convergence analyses of recursive schemes \cite{Ljung77} and LS estimation \cite{LW82,CG86} must therefore be checked along the closed-loop trajectory. When $b_1$ is unknown, the controller divides by its estimate. Even when the gain estimate is nonzero~\cite{MC85}, a value close to zero can produce a large input.

Existing designs address this dependence by changing the estimator or controller, or by supplying additional excitation. Stochastic gradient \cite{GRC80}, modified LS \cite{SG82,Chen84} and weighted LS \cite{Bercu95} alter the parameter update. For non-Gaussian disturbances, Filipovi\'c \cite{Filipovic99} uses a nonlinear score function in the estimation recursion to obtain stability and optimal tracking without modifying the gain matrix. On the control side, adjustments to the estimates used in feedback limit the effect of small estimated gains \cite{ZG21}, while robust minimum variance designs use prior gain information to obtain stability and performance guarantees under model perturbations \cite{PLK89}. Probing inputs supply information independently of the prescribed reference \cite{LW86,CG87,LW87,BV10PE}. More recent work quantifies learning costs through finite time guarantees for identification and adaptive control of autoregressive exogenous systems \cite{Lale21} and regret bounds for minimum variance regulators combining probing with input clipping \cite{Singh24}.

For the original LS-based STR, stability and optimal tracking are established for bounded references when the input gain is known, under suitable model and noise assumptions \cite{GC91,Guo95,BV10SC}. Under coprimeness and degree conditions, Guo \cite{Guo95} also established strong consistency of the remaining parameter estimates without requiring reference excitation. For an unknown gain and unit delay, Kumar \cite{Kumar90} used Bayesian embedding to establish optimal tracking of bounded deterministic references and almost sure convergence of the parameter estimates to a random multiple of the true parameter vector. His analysis excludes a Lebesgue null set of true parameters. An additional average energy condition on the filtered reference then yields convergence to the true parameter vector. Guo \cite{Guo94,Guo95} established stability, optimal tracking and parameter consistency, together with convergence rates, for the original controller when the reference information grows sufficiently fast. For bounded references without this growth requirement, Guo \cite[Theorem~6.3]{Guo95} obtained stability and optimal tracking by adjusting the estimates used for control. His results with decaying probing signals also include parameter consistency \cite[Theorem~7.3]{Guo95}. However, these results do not establish whether, in the absence of reference excitation, the original regulator can achieve stability, optimal tracking and strong parameter consistency simultaneously without adjusting the feedback estimates or adding probing signals.

We answer this question by analyzing the information supplied by the process noise. When at least two parameters are estimated, we construct an auxiliary LS prediction of the noise and prove that its cumulative squared difference from each fixed bounded reference has a logarithmic lower bound almost surely. To use this information in the actual recursion, we combine Guo's estimates~\cite{Guo94,Guo95}, which apply before stability is established, with the LS identities. If the gain estimate failed to converge to its true value, these estimates and the auxiliary noise information would give incompatible upper and lower bounds on the same weighted squared sum. This contradiction establishes gain consistency.

Our main contributions are as follows. The consistency, rate, residual and distributional results below concern models with at least two estimated parameters.
\begin{enumerate}
\item We establish almost sure stability, optimal tracking and strong consistency of the parameter estimates for the original LS-based STR under every bounded reference. The squared error norms for the parameter estimates and their ratios to the input gain are respectively $O(\log\log\log n/\log n)$ and $O(\log\log n/n)$ almost surely, with the former rate sharp under zero reference.
\item We derive an asymptotic law relating cumulative squared tracking residuals to the log determinant of the LS information matrix, together with an expansion that makes the dependence on reference energy explicit.
\item We establish a chi-square limit yielding asymptotically exact confidence ellipsoids for each bounded reference, even when its empirical correlation matrices do not converge.
\end{enumerate}

Section~\ref{sec:model} formulates the model, regulator and assumptions. Section~\ref{sec:main} states all main results. Sections~\ref{sec:identification}--\ref{sec:cost} prove closed-loop consistency, the learning rates, and the residual and distributional laws, respectively. The appendices collect the supporting identities, probabilistic and filter estimates, and examples concerning zero divisors and transient moments.

\subsection{Notation}
Vector norms are Euclidean and matrix norms are induced, except for the Frobenius norm $\|\cdot\|_F$. For symmetric matrices, $H>0$ means positive definite and $H\succeq K$ means $H-K$ is positive semidefinite. Write $\|v\|_H^2=v^THv$ for $H>0$. The backward shift satisfies $zv_n=v_{n-1}$. For a causal filter $F(z)$, $[F(z)v]_n$ denotes its output at time $n$, with zero initial history unless specified otherwise. An external time subscript on an operator expression has the same meaning.

Asymptotic relations concern $n\to\infty$. For positive sequences, $a_n\asymp b_n$ means $cb_n\le a_n\le Cb_n$ eventually for some $c,C>0$, with the positive semidefinite order used for matrices. Constants, including those in $O(\cdot)$ and $\asymp$, are independent of time indices and test vectors but may depend on the fixed model, initial data, reference bound and exponents. Constants in pathwise estimates may also depend on the sample path and may differ between estimates.

\section{Problem Formulation}\label{sec:model}
\subsection{Model and regulator}
The required initial values $y_{1-p},\ldots,y_0$ and $u_{1-q},\ldots,u_{-1}$ are arbitrary finite deterministic values. When $p=0$, set $y_0=0$. The controller selects $u_0$. Extend $w_n$ by zero for $n\le0$. Let $y_n^*$, $n\ge1$, denote the reference signal to be tracked by $y_n$, and set $y_n^*=0$ for $n\le0$. With $d=p+q$, define
\[
 \theta=(-a_1,\ldots,-a_p,b_1,\ldots,b_q)^T\in\R^d,\qquad
 \ph_n=(y_n,\ldots,y_{n-p+1},u_n,\ldots,u_{n-q+1})^T.
\]
Empty component blocks are omitted, and sums over empty index sets are zero. Let $e_j\in\R^d$ denote the $j$th coordinate vector, $1\le j\le d$. Then $y_{n+1}=\theta^T\ph_n+w_{n+1}$. Ordinary recursive least squares is
\begin{align}
 \thh_{n+1}&=\thh_n+\frac{P_n\ph_n}{1+\ph_n^TP_n\ph_n}
       (y_{n+1}-\thh_n^T\ph_n),\label{eq:rls}\\
 P_{n+1}^{-1}&=P_n^{-1}+\ph_n\ph_n^T,\qquad P_0>0.\label{eq:info-update}
\end{align}
The initial estimate $\thh_0$ and matrix $P_0$ are deterministic. The original CE control equation is
\begin{equation}\label{eq:ce}
 \thh_n^T\ph_n=y_{n+1}^*.
\end{equation}
Writing $\thh_n=(-\ahat_{1,n},\ldots,-\ahat_{p,n},\bhat_{1,n},\ldots,\bhat_{q,n})^T$ gives
\begin{equation*}
 u_n=\frac{y_{n+1}^*+\sum_{i=1}^p\ahat_{i,n}y_{n-i+1}
                   -\sum_{j=2}^q\bhat_{j,n}u_{n-j+1}}{\bhat_{1,n}}.
\end{equation*}
The input is defined whenever $\bhat_{1,n}\ne0$. At each time $n\ge0$, the controller uses the current estimate $\thh_n$ and the known reference $y_{n+1}^*$ to select $u_n$. After observing $y_{n+1}$, the LS update produces $\thh_{n+1}$.

\subsection{Control objective}
Define $\pi_n$ and the tracking residual $\varepsilon_n$ by
\[
 \pi_n=\theta^T\ph_n=y_{n+1}-w_{n+1},\qquad \varepsilon_n=\pi_n-y_{n+1}^*,
\]
and the cumulative residual, regressor and reference energies by
\[
 R_n=\sum_{j=0}^{n-1}\varepsilon_j^2,\qquad
 r_n=1+\sum_{j=0}^{n-1}\norm{\ph_j}^2,\qquad
 E_n=\sum_{j=0}^{n-1}(y_{j+1}^*)^2.
\]
The tracking error satisfies $y_{n+1}-y_{n+1}^*=\varepsilon_n+w_{n+1}$.

Following~\cite[Section 1.2]{Guo94}, we assess control performance through sample averages. Stability means bounded average input and output energy,
\begin{equation}\label{eq:stability}
 \limsup_{n\to\infty}\frac1n\sum_{j=0}^{n-1}(y_j^2+u_j^2)<\infty\as
\end{equation}
Let $\sigma^2>0$ denote the noise variance. Tracking is asymptotically optimal when
\begin{equation}\label{eq:tracking}
 \frac1n\sum_{j=1}^{n}(y_j-y_j^*)^2\longrightarrow\sigma^2\as
\end{equation}
Strong consistency is the additional identification requirement $\thh_n\to\theta$ almost surely.

\subsection{Assumptions}
Define the filtration
$\F_n=\sigma(y_j^*:j\ge1;w_1,\ldots,w_n)$.
\begin{assumption}\label{ass:noise}
The reference satisfies $\sup_{n\ge1}|y_n^*|<\infty$ almost surely. The regressors are $\F_n$-measurable, and, for a deterministic $\sigma^2>0$ and some $\nu>2$,
\begin{equation}\label{eq:noise}
 \E[w_{n+1}\mid\F_n]=0,\qquad
 \E[w_{n+1}^2\mid\F_n]=\sigma^2,\qquad
 \sup_{n\ge0}\E[|w_{n+1}|^\nu\mid\F_n]<\infty\quad\text{a.s.}
\end{equation}
\end{assumption}
\begin{assumption}\label{ass:plant}
The polynomial $B$ has no zeros in $|z|\le1$, $\gcd(A-1,B)=1$, and at least one of $\deg(A-1)=p$ and $\deg B=q-1$ holds, with $\deg0=-\infty$.
\end{assumption}
For $p\ge1$, the degree condition is equivalent to $|a_p|+|b_q|>0$. These structural conditions also occur in Kumar's results on parameter convergence~\cite[Theorems 5(iii) and 6(iii)]{Kumar90} and Guo's consistency theorem for known input gain~\cite[Theorem 5.1]{Guo95}. The noise condition~\eqref{eq:noise} is imposed relative to a filtration containing the entire reference path. Appendix~\ref{app:noise} justifies conditioning on that path.

As in~\cite[eq. (70)]{Guo95}, assume that the original recursion is well defined:
\begin{equation}\label{eq:well-defined}
 \Prob\{\bhat_{1,n}\ne0\text{ for every integer }n\ge0\}=1.
\end{equation}
A nonzero initial gain estimate is sufficient if, for every $n\ge0$, the conditional distribution of $w_{n+1}$ given $\F_n$ almost surely assigns zero probability to every singleton. Bounded discrete noise need not satisfy \eqref{eq:well-defined}. Both claims are proved in Appendix~\ref{app:boundary}.

Under Assumption~\ref{ass:noise}, $\pi_n$ is the predictable part of $y_{n+1}$, and $\varepsilon_n$ is $\F_n$-measurable. Thus
\[
 \E[(y_{n+1}-y_{n+1}^*)^2\mid\F_n]=\varepsilon_n^2+\sigma^2.
\]
Since $b_1\ne0$, known coefficients would allow the controller to choose $u_n$ from $\theta^T\ph_n=y_{n+1}^*$, giving $\varepsilon_n=0$ and attaining the minimum conditional mean squared tracking error $\sigma^2$.

\section{Main Results}\label{sec:main}

\subsection{Stability, tracking and parameter convergence}

\begin{theorem}\label{thm:basic}
Under Assumptions~\ref{ass:noise}--\ref{ass:plant} and \eqref{eq:well-defined}, the recursion \eqref{eq:rls}--\eqref{eq:ce} satisfies
\begin{equation*}
 r_n=O(n),\qquad R_n=O(\log n)\as
\end{equation*}
Consequently, \eqref{eq:stability} and \eqref{eq:tracking} hold. If $d\ge2$, then
\begin{equation}\label{eq:uniform-rate}
 \norm{\thh_n-\theta}^2
 =O\!\left(\frac{\log\log\log n}{\log n}\right)\as
\end{equation}
\end{theorem}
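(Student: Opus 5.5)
The plan follows the route sketched in the introduction: first prove gain consistency $\bhat_{1,n}\to b_1$ before stability, then deduce stability and optimal tracking, and finally sharpen to the rate \eqref{eq:uniform-rate}.

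\textbf{Step 1: Guo-type estimates valid before stability.} Write $\tilde\theta_n=\theta-\thh_n$. From the LS identities and the martingale estimate of \cite{Guo94,Guo95} (the standard $\lambda_{\min}/\log$ bound for \eqref{eq:rls}--\eqref{eq:info-update} under \eqref{eq:noise} with $\nu>2$), I will obtain, almost surely,
\[
\norm{\tilde\theta_{n}}^2_{P_{n}^{-1}}+\sum_{j<n}\frac{(\tilde\theta_j^T\ph_j)^2}{1+\ph_j^TP_j\ph_j}=O(\log r_n).
\]
Since \eqref{eq:ce} gives $\tilde\theta_j^T\ph_j=\varepsilon_j$, this becomes a bound $O(\log r_n)$ on a weighted residual sum $\sum_j\alpha_j\varepsilon_j^2$. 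Here $\alpha_j=(1+\ph_j^TP_j\ph_j)^{-1}$.

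\textbf{Step 2: auxiliary noise information and gain consistency, the main obstacle.} Define an auxiliary LS prediction $\widehat w_{j+1}$ of $w_{j+1}$ from past data. For $d\ge2$, prove $\sum_{j<n}(\widehat w_{j+1}-y^*_{j+1})^2\ge c\log n$ almost surely for each fixed bounded reference. I would show this by a conditional-variance / martingale argument: $w_{j+1}$ enters the next regressor with coefficient $1$ through $y_{j+1}$. An LS predictor that is forced to track a fixed reference must therefore carry a residual of order $\ph_j^TP_j\ph_j$ at each step, and $\sum\ph_j^TP_j\ph_j\asymp\log\det P_n^{-1}$.

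Suppose, on an event of positive probability, that $\bhat_{1,n}\not\to b_1$ along a subsequence. Expand $\varepsilon_j$ using the CE equation. The gain error times $u_j$ expresses $\varepsilon_j$ as a nondegenerate multiple of the auxiliary quantity $\widehat w_{j+1}-y^*_{j+1}$, up to controlled errors, and the coprimeness and degree parts of Assumption~\ref{ass:plant} are what ensure nondegeneracy. This yields the lower bound $\sum\alpha_j\varepsilon_j^2\gtrsim(\log n)^{1+\delta}$, or a lower bound of order $\log r_n$ with a strictly larger constant. That is incompatible with Step 1 and gives a contradiction. Getting the weights $\alpha_j$ to match on both sides is the delicate part. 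I would first try to work on the blocks where $\ph_j^TP_j\ph_j$ is small, using $\sum_j\min(1,\ph_j^TP_j\ph_j)=O(\log r_n)$.

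\textbf{Step 3: stability and tracking.} Once $\bhat_{1,n}\to b_1\ne0$, the closed loop is a perturbation of the known-gain STR. Minimum phase of $B$ gives $|u_j|$ controlled by filtered outputs through the stable $1/B$ filter. The argument of \cite{GC91,Guo95} for known gain then yields $r_n=O(n)$. Step 1 gives $\max_{j\le n}\ph_j^TP_j\ph_j\to0$, so $\alpha_j\to1$ and $R_n=O(\log n)$. Then \eqref{eq:stability} is immediate. For \eqref{eq:tracking}, expand $(\varepsilon_j+w_{j+1})^2$, use the martingale LLN for $w^2$ (via $\nu>2$) and for the cross term $\sum\varepsilon_jw_{j+1}=O(R_n^{1/2+\eta})$, and use $R_n/n\to0$.

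\textbf{Step 4: the rate.} From $\norm{\tilde\theta_n}^2\le\log r_n/\lambda_{\min}(P_n^{-1})$, it suffices to show $\lambda_{\min}(P_n^{-1})\ge c\log n\cdot\log n/\log\log\log n$, i.e.\ order $(\log n)^2/\log\log\log n$ against $\log r_n\asymp\log n$.

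The direction $(\thh_n$ normalized$)$, which lies in the kernel of the CE constraint, is excited only by the noise. Quantify the auxiliary lower bound of Step 2 along that direction using an LIL for $\sum w_{j+1}\widehat w_{j+1}$. The $\log\log\log n$ loss then comes from the LIL applied on the logarithmic time scale $\log n$.

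I expect Step 2 to be the crux, and the triple logarithm in Step 4 to require careful bookkeeping.
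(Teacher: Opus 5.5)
Your overall architecture matches the paper's: gain consistency first, from Guo's pre-stability LS bounds plus auxiliary noise information, then stability, then rates. However, the mechanism you propose for Step~2 does not close.

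\textbf{Step~2.} Along $n\in\calB$ one has $r_n=O(n)$, $G_n\asymp nI$ and $J_n=O(\log n)$. Summing \eqref{eq:energy-step} therefore bounds your weighted sum $\sum_j\alpha_j\varepsilon_j^2$ by $\sigma^2\ell_n$ up to lower-order martingale terms, where $\ell_n=\log J_n+\log\det G_n+O(1)\le(d-1)\log n+O(\log\log n)$. The auxiliary noise information supplies only a multiple of $\log n$, so nothing produces $(\log n)^{1+\delta}$.

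Nor is there a usable constant separation. With zero reference, \eqref{eq:scale-ce} gives $\varepsilon_j=\pi_j=-\{\widehat f_j+(s_j-1)k_j^Tx_j\}/s_j$. For $|s_j|>1$ the energy inherited from $\widehat f_j\approx f_j$ falls below $(d-1)\sigma^2\log n$. Also, $k_j^Tx_j$ is not a controlled error, because under persistent gain error $k_n^TG_nk_n=\mu_n-J_n\sim\mu_n$.

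The missing idea is to leave the residuals and work with the Schur-complement scale information $J_n$. The normal equation $J_n(s_n-1)=T_n+k_n^TS_n$ holds with $T_n^2=O(\mu_n\log(e+\mu_n))$ and $U_n=O(H_n)$. With $|s_n-1|$ bounded away from zero, it forces $J_n=O(\sqrt{\mu_nH_n})$. This is $o(\mu_n)$ once the auxiliary bound has given $\log n=O(\mu_nH_n)$ and $\mu_n=O((\log n)^2)$. Then the identities $\chi_t=s_t(\pi_t+k_t^Tx_t)=y^*_{t+1}+(k_t-\rho_t)^Tx_t$ and $\chi_t^2=s_t^2(1+\lambda_t)\Delta J_t$, together with \eqref{eq:telescoping}, give $\sum_{t=m_n}^{n-1}\chi_t^2/\mu_t=o(\log\log n)$. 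On the other hand, $\chi_t\approx k_n^Tz_t+y^*_{t+1}$ and \eqref{eq:Gram-weighted} bound the same sum below by a multiple of $\log(n/m_n)\asymp\log\log n$, which is the contradiction.

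Your auxiliary bound also needs the right object and the right reason. The paper's $f_n$ is the LS prediction from the open-loop regressor $z_n=Q^T\ph_n^w$, which is nondegenerate by coprimeness and the degree condition. The law $\sum_jf_j^2\sim(d-1)\sigma^2\log n$ comes from the LS energy identity for that auxiliary regression. The essential extra fact is $\sum_jy^*_{j+1}f_j=o(\log n)$ (Lemma~\ref{lem:reference-cross}). No predictor is ``forced to track a reference''. Moreover, $\sum_j\ph_j^TP_j\ph_j\asymp\log\det P_n^{-1}$ for the closed-loop regressors is not available before stability.

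\textbf{Step~3.} Step~1 does not give $\max_j\ph_j^TP_j\ph_j\to0$. In these coordinates $\ph_n^TP_n\ph_n=\lambda_n+(\pi_n+k_n^Tx_n)^2/J_n$, and controlling the second term needs $J_n\asymp\mu_n\gtrsim\log n$. The paper proves this in Lemma~\ref{lem:bootstrap} by reusing the auxiliary noise bound and Lemma~\ref{lem:reference-Gram}. Only then does \eqref{eq:Guo-weighted} give $R_n=O(\log n)$.

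\textbf{Step~4.} The reduction is false. Under zero reference $\lambda_{\min}(P_n^{-1})\asymp\mu_n\asymp\log n$, so $V_n=O(\log n)$ gives only $\norm{\thh_n-\theta}^2=O(1)$. The paper first shows $\varepsilon_n=o(1)$. The LIL with bounded predictable coefficients then gives $S_n=O(\sqrt{nH_n})$, $T_n^2=O(\mu_n\ellfun(\mu_n))$ and $\norm{\ups_n}^2=O(H_n/n)$. It then makes $k_n^TS_n$ negligible in $J_n(s_n-1)=T_n+k_n^TS_n$ through the bound on $g_n$ in Lemma~\ref{lem:cross-block}. Your remark that the triple logarithm is an LIL on the scale $\mu_n\asymp\log n$ is right, but it enters through the scale score $T_n$, not through a larger $\lambda_{\min}(P_n^{-1})$.

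\textbf{The case $d=1$.} The stability and tracking claims include $d=1$, where gain consistency can fail (Remark~\ref{rem:d1}). That case needs a separate scalar argument.
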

The stability and tracking assertions are proved in Section~\ref{sec:identification}, and the parameter rate in Section~\ref{sec:rates}.
\begin{remark}\label{rem:d1}
Theorem~\ref{thm:basic} does not require the growth condition on reference information imposed in~\cite[Theorem 4.2]{Guo94} and~\cite[Theorem 7.2]{Guo95} for unknown input gain.
If $p=0$, $q=1$ and $y_n^*=0$, any nonzero initial gain estimate gives $u_n=0$ and $\bhat_{1,n}=\bhat_{1,0}$ for all $n$. Thus consistency can fail for $d=1$, although optimal tracking holds.
\end{remark}

For $d\ge2$, let the columns of $Q\in\R^{d\times(d-1)}$ form an orthonormal basis of $\theta^\perp$. Write
\begin{equation}\label{eq:coords}
 \thh_n=s_n\theta+Q\ups_n.
\end{equation}
Thus $s_n-1$ is the scale error, and $Q\ups_n$ is the component of $\thh_n-\theta$ orthogonal to $\theta$.

\begin{theorem}\label{thm:rates}
Under the conditions of Theorem~\ref{thm:basic}, with $d\ge2$, almost surely,
\begin{equation}\label{eq:rate-main}
 (s_n-1)^2=O\!\left(\frac{\log\log(E_n+\log n)}{E_n+\log n}\right),
 \qquad
 \norm{\ups_n}^2=O\!\left(\frac{\log\log n}{n}\right).
\end{equation}
Consequently,
\begin{equation}\label{eq:full-rate}
 \norm{\thh_n-\theta}^2
 =O\!\left(\frac{\log\log(E_n+\log n)}{E_n+\log n}\right).
\end{equation}
\end{theorem}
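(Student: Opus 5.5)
Our plan combines the standard least-squares error bound with a direction-by-direction analysis of the information matrix $P_{n+1}^{-1}$, using the decomposition \eqref{eq:coords}. We take as given the conclusions of Theorem~\ref{thm:basic}, namely $r_n=O(n)$, $R_n=O(\log n)$ and strong consistency. The starting point is the classical LS bound (Lai--Wei type):
\[
 (\thh_{n+1}-\theta)^TP_{n+1}^{-1}(\thh_{n+1}-\theta)=O(\log r_n)=O(\log n)\as
\]
The refinement to $\log\log$ factors will come from treating the $\theta$ direction and the orthogonal directions separately.

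\emph{Orthogonal directions.} The regressor splits as $\ph_j=\theta\,\pi_j/\norm{\theta}^2+QQ^T\ph_j$. Here $\pi_j=y^*_{j+1}+\varepsilon_j$, and $QQ^T\ph_j$ carries the noise through the closed-loop dynamics. Minimum phase (Assumption~\ref{ass:plant}) and stability let us write $\ph_j$ as a stable filter of $(w,y^*,\varepsilon)$. The coprimeness and degree conditions then give that the lags of $w$ appearing in $Q^T\ph_j$ span a nondegenerate excitation. So we aim to show
\[
 \lambda_{\min}\Bigl(Q^T\sum_{j<n}\ph_j\ph_j^TQ\Bigr)\ge cn\as,
\]
by the same argument Guo uses for known gain \cite[Theorem 5.1]{Guo95}, with the vanishing residual $R_n=O(\log n)$ as a perturbation. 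We handle the cross terms between $\theta$ and $Q$ directions with martingale bounds. To get $\norm{\ups_n}^2=O(\log\log n/n)$ rather than $O(\log n/n)$, we project the error equation onto the orthogonal coordinates and apply the law of the iterated logarithm to the martingale $\sum Q^T\ph_jw_{j+1}$, whose variance is of order $n$. We expect the coupling with the slowly converging scale component to contribute only lower order, after using $\sum \pi_j\,Q^T\ph_j$ estimates.

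\emph{Scale direction.} This is the main obstacle. In the $\theta$ direction the information is only
\[
 \sum_{j<n}\pi_j^2=E_n+O(\sqrt{E_n\log n})+O(\log n),
\]
plus the additional $\log n$ growth forced by noise. The last is exactly the logarithmic lower bound on the auxiliary noise-prediction discrepancy from the consistency proof. We would write $\theta^T(\thh_n-\theta)=(s_n-1)\norm{\theta}^2$ and use the CE identity $\thh_n^T\ph_n=y^*_{n+1}$, which gives
\[
 (s_n-1)\pi_n=-\varepsilon_n-\ups_n^TQ^T\ph_n.
\]
This identity ties $(s_n-1)^2$ to a ratio of a weighted martingale to the accumulated information $N_n:=E_n+\log n$. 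First we show that the effective information in the scale direction is $\asymp N_n$; the lower bound $\gtrsim\log n$ comes from the auxiliary prediction result. Then we apply a self-normalized LIL to the martingale $\sum \pi_j w_{j+1}$ (after removing the $\ups$-dependent part, which is $O(\log\log n)$-summable by the previous step). This yields $(s_n-1)^2=O(\log\log N_n/N_n)$. The delicate point is that the relevant information is only logarithmic when $E_n$ is small. In that regime a plain LIL is unavailable, so we would use the Lai--Wei weighted bound with the $\log\log$ of the information itself, i.e. a self-normalized LIL valid for arbitrary predictable weights.

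Finally, \eqref{eq:full-rate} follows from
\[
 \norm{\thh_n-\theta}^2=(s_n-1)^2\norm{\theta}^2+\norm{\ups_n}^2.
\]
Since $N_n\le Cn$ and $x\mapsto\log\log x/x$ is eventually decreasing, the $\ups$ term is dominated by the scale term.
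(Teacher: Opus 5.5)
Your overall architecture matches the paper's: the orthogonal rate comes from $G_n\asymp nI$ and a LIL for $S_n$, and the scale rate from a LIL for $T_n=T_0+\sum_{j<n}\pi_jw_{j+1}$ together with $J_n\asymp\mu_n\asymp E_n+\log n$. Your final combination step is also fine. The gap is in the scale step.

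The upper block of the normal equations reads $\mu_n(s_n-1)+g_n^T\ups_n=T_n$, or equivalently $J_n(s_n-1)=T_n+k_n^TS_n$ with $k_n=-G_n^{-1}g_n$. The LIL disposes of $T_n$, and that part is not delicate: $\pi_j$ is bounded after consistency, so Lemma~\ref{lem:bounded-LIL} gives $T_n^2=O(\mu_n\ellfun(\mu_n))$ however slowly $\mu_n$ grows. The coupling term, however, is not negligible ``by the previous step''. Your plan provides only $\norm{\ups_n}^2=O(\log\log n/n)$ and the generic cross-information bound $\norm{g_n}^2\le\mu_n\norm{G_n}=O(n\mu_n)$ (equivalently $|k_n^TS_n|^2\le\mu_nU_n$). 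Together these give only $|g_n^T\ups_n|^2=O(\mu_n\log\log n)$, hence only $(s_n-1)^2=O(\log\log n/(E_n+\log n))$. That has $\log\log n$ where the theorem has $\log\log(E_n+\log n)$. Under zero reference it gives $\log\log n/\log n$ instead of $\log\log\log n/\log n$, and Corollary~\ref{cor:sharp} shows the latter is the exact order, so the loss is genuine.

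The missing ingredient is a sharp bound on the cross information $g_n=g_0+\sum_{t<n}\pi_tx_t$, namely $\norm{g_n}=O(E_n+\sqrt{nH_n\log(e+E_n)})$ (Lemma~\ref{lem:cross-block}). This is not a martingale estimate, since $\pi_t$ and $x_t$ are both $\F_t$-measurable, and Cauchy--Schwarz on $\sum\varepsilon_tx_t$ gives only $O(\sqrt{n\log n})$. The paper obtains it from the CE identity you wrote down, but uses it inside $g_n$ rather than to bound $s_n-1$ directly. Once $|s_t|\ge1/2$, one has $\pi_tx_t=s_t^{-1}y_{t+1}^*x_t-s_t^{-1}x_tx_t^T\ups_t$. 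The pieces are then handled as follows:
\begin{itemize}
\item $\sum y_{t+1}^*z_t=O(\sqrt{nH_n})$ by a weighted filter LIL, and the deterministic reference component contributes $O(E_n)$.
\item The preliminary rate $(s_t-1)^2=O(H_t/\mu_t)$, summed against $(y_{t+1}^*)^2$, gives $O(H_n\log(e+E_n))$.
\item $\norm{\ups_t}^2=O(H_t/t)$ along the whole trajectory gives $\sum_{t<n}\norm{x_t}^2\norm{\ups_t}=O(\sqrt{nH_n})$.
\end{itemize}
This yields $|k_n^TS_n|^2=O(H_nE_n^2/n+H_n^2\log(e+E_n))=O(\mu_n\ellfun(\mu_n))$, which contributes only $O(\ellfun(\mu_n)/\mu_n)$ to $(s_n-1)^2$.

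So the argument has to be staged. First obtain the crude rate $(s_n-1)^2=O(H_n/\mu_n)$ from the $T_n$ LIL, $U_n=O(H_n)$ and $|k_n^TS_n|^2\le\mu_nU_n$. Then obtain $\norm{\ups_n}^2=O(H_n/n)$ via $\ups_n=\rho_n+k_n(s_n-1)$ with $\norm{k_n}^2=O(\mu_n/n)$; your vague ``coupling'' step in the orthogonal part needs exactly this crude scale rate. Next feed both rates into the bound on $g_n$. Only then return to the scale equation.
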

The proof is given in Section~\ref{sec:rates}.

\begin{corollary}\label{cor:ratios}
Under the conditions of Theorem~\ref{thm:basic}, with $d\ge2$,
\begin{equation*}
 \norm{\frac{\thh_n}{\bhat_{1,n}}-\frac\theta{b_1}}^2
 =O(\log\log n/n)\as
\end{equation*}
\end{corollary}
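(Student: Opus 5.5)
The plan is to derive the corollary directly from the decomposition \eqref{eq:coords} and the rates in Theorem~\ref{thm:rates}. Write $\thh_n=s_n\theta+Q\ups_n$ and note that $\bhat_{1,n}=e_{p+1}^T\thh_n=s_nb_1+e_{p+1}^TQ\ups_n$. The scale factor $s_n$ then cancels to first order in the ratio:
\[
 \frac{\thh_n}{\bhat_{1,n}}-\frac{\theta}{b_1}
 =\frac{b_1\thh_n-\bhat_{1,n}\theta}{b_1\bhat_{1,n}}
 =\frac{b_1Q\ups_n-(e_{p+1}^TQ\ups_n)\theta}{b_1\bhat_{1,n}}.
\]
Here the $s_n$ terms cancel exactly, since $b_1 s_n\theta-s_nb_1\theta=0$. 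The numerator is therefore linear in $\ups_n$, and its norm is bounded by $(|b_1|+\norm\theta)\norm{\ups_n}$, because $\norm{Q}=1$ and $|e_{p+1}^TQ\ups_n|\le\norm{\ups_n}$.

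For the denominator, Theorem~\ref{thm:basic} with $d\ge2$ gives $\thh_n\to\theta$ almost surely, so $\bhat_{1,n}\to b_1\ne0$. Hence $|b_1\bhat_{1,n}|\ge b_1^2/2$ eventually, along each sample path in the probability-one event where the theorem holds. By \eqref{eq:well-defined}, $\bhat_{1,n}\ne0$ for all $n$ almost surely, so the ratio is defined at every time, and only finitely many early terms fall outside the eventual bound. These affect only the pathwise constant.

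Combining the two bounds gives
\[
 \norm{\frac{\thh_n}{\bhat_{1,n}}-\frac\theta{b_1}}^2\le\frac{4(|b_1|+\norm\theta)^2}{b_1^4}\norm{\ups_n}^2
\]
eventually, and the second estimate in \eqref{eq:rate-main}, $\norm{\ups_n}^2=O(\log\log n/n)$, finishes the argument. There is no real obstacle once Theorem~\ref{thm:rates} is available; all the difficulty lies in the orthogonal rate $\norm{\ups_n}^2=O(\log\log n/n)$ proved there. The only point to check is the algebraic cancellation of $s_n$, which is why the slower scale rate does not enter.
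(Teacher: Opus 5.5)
Your proof is correct and matches the paper's argument. The paper uses the same exact identity $\frac{\thh_n}{\bhat_{1,n}}-\frac{\theta}{b_1}=\frac{(I-\theta e_{p+1}^T/b_1)Q\ups_n}{\bhat_{1,n}}$, in which the scale error $s_n-1$ cancels. It then combines this identity with $\bhat_{1,n}\to b_1\ne0$ and the transverse rate $\norm{\ups_n}^2=O(\log\log n/n)$ from \eqref{eq:rate-main}.
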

The proof is given in Section~\ref{sec:rates}.
\begin{remark}
Parameter ratios remove the common scale error, so Corollary~\ref{cor:ratios} gives the squared error bound $O(\log\log n/n)$ almost surely for the feedback coefficients in the CE control law \eqref{eq:ce}.
\end{remark}

\subsection{Logarithm laws and sharpness}
We now refine the bound $R_n=O(\log n)$ by relating cumulative squared tracking residuals to the LS information matrix. Write
\begin{equation}\label{eq:ell}
 \ell_n=\log\frac{\det(P_n^{-1})}{\det(P_0^{-1})}.
\end{equation}

\begin{theorem}\label{thm:cost}
Under the conditions of Theorem~\ref{thm:basic}, with $d\ge2$, almost surely,
\begin{align}
 R_n&=\sigma^2\ell_n+
 O\!\left(\sqrt{\log n\,\log\log\log n}\right)\label{eq:R-ell}\\
 &=\sigma^2\{(d-1)\log n+\log(E_n+\log n)\}
 +O\!\left(\sqrt{\log n\,\log\log\log n}\right).\label{eq:R-explicit}
\end{align}
In particular $R_n/(\sigma^2\ell_n)\to1$.
\end{theorem}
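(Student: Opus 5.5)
The plan is to derive \eqref{eq:R-ell} from the exact Lyapunov identity for ordinary recursive least squares evaluated along the closed loop. The rate bounds of Theorem~\ref{thm:rates} and the information-matrix estimates obtained in Section~\ref{sec:rates} would then control every remainder. Write $\tilde\theta_n=\thh_n-\theta$, $a_n=\ph_n^TP_n\ph_n$ and $V_n=\tilde\theta_n^TP_n^{-1}\tilde\theta_n$. By the CE equation \eqref{eq:ce}, $\varepsilon_n=\theta^T\ph_n-\thh_n^T\ph_n=-\tilde\theta_n^T\ph_n$. Expanding \eqref{eq:rls}--\eqref{eq:info-update} in the standard way should give
\begin{equation*}
 V_n+\sum_{j=0}^{n-1}\frac{\varepsilon_j^2}{1+a_j}
 =V_0+\sum_{j=0}^{n-1}\frac{a_j}{1+a_j}w_{j+1}^2
 -2\sum_{j=0}^{n-1}\frac{\varepsilon_jw_{j+1}}{1+a_j}.
\end{equation*}
Since $\det P_{j+1}^{-1}=(1+a_j)\det P_j^{-1}$, we have $\ell_n=\sum_{j<n}\log(1+a_j)$. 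Proving \eqref{eq:R-ell} therefore reduces to showing that each of the following terms is $O(\sqrt{\log n\,\log\log\log n})$:
\begin{enumerate}
\item $V_n$;
\item the cross martingale $\sum_{j<n}\varepsilon_jw_{j+1}/(1+a_j)$;
\item the noise martingale $\sum_{j<n} a_j(w_{j+1}^2-\sigma^2)/(1+a_j)$;
\item the Taylor discrepancy $\sum_{j<n}[\log(1+a_j)-a_j/(1+a_j)]\le\sum_{j<n} a_j^2$;
\item the weighting discrepancy $\sum_{j<n}\varepsilon_j^2a_j/(1+a_j)$.
\end{enumerate}

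For (i), I would work in the coordinates \eqref{eq:coords}. I would use the fact, which should come out of the proof of Theorem~\ref{thm:rates}, that in the basis $(\theta,Q)$ the matrix $P_n^{-1}$ has $\theta$-direction entry of order $E_n+\log n$, $Q$-block of order $n$, and a cross term small enough for a Schur-complement argument. Then $V_n\asymp(s_n-1)^2(E_n+\log n)+n\norm{\ups_n}^2$, which is $O(\log\log n)$ by \eqref{eq:rate-main}. For (ii), the local martingale convergence theorem (or the martingale LIL) gives $O(\sqrt{R_n\log\log R_n})$, and $R_n=O(\log n)$ from Theorem~\ref{thm:basic} turns this into exactly $O(\sqrt{\log n\,\log\log\log n})$. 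This is where the error term comes from.

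For (iii)--(v), the key input is a pointwise bound on $a_j$. I expect $a_j\le\norm{\ph_j}^2\lambda_{\min}(P_j^{-1})^{-1}$ to be too crude, because the smallest eigenvalue is only of order $\log j$. Instead I would split $\ph_j$ along $\theta$ and $\theta^\perp$. The $\theta$-component is $\pi_j=y^*_{j+1}+\varepsilon_j$, so $a_j\lesssim\pi_j^2/(E_j+\log j)+\norm{\ph_j}^2/j$. Since $\norm{\ph_j}^2=o(j^{2/\nu})$ almost surely under the $\nu$-th moment condition, and the increments $\pi_j^2/(E_{j+1}+\log j)$ sum like $\log(E_n+\log n)$ while being individually small, I would aim for $\sum_j a_j^{1+\delta}<\infty$ or at least $\sum_{j<n}a_j^2=O(\log\log n)$. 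Given such a bound, (iv) is immediate. Item (v) is at most $\max_{j\ge m}a_j\cdot R_n$ plus a bounded contribution from $j<m$, and I would sharpen this to $\sum_j\varepsilon_j^2a_j=O(\log\log n)$ using $\varepsilon_j^2\lesssim\norm{\tilde\theta_j}^2\norm{\ph_j}^2$ together with the parameter rate \eqref{eq:uniform-rate}. For (iii), Chow's theorem with exponent $\min(\nu/2,2)>1$ applied to $w^2-\sigma^2$ gives $O\bigl((\sum a_j^{\nu/2\wedge2})^{1/(\nu/2\wedge2)}\log^{\cdot}\bigr)$, which is $o(\sqrt{\log n})$ once the $a_j$ are summable in a suitable power. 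Controlling these $a_j$ sums is the step I expect to be the main obstacle. The slow direction $\theta$ has only logarithmic information, so the leverages along that direction are not uniformly tiny, and a careful direction-split bound is needed rather than an eigenvalue bound.

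Finally, for \eqref{eq:R-explicit}, I would compute $\ell_n$ from the block structure of $P_n^{-1}$ in the orthonormal basis $(\theta/\norm\theta,Q)$. The $Q$-block is $\asymp n$ with eigenvalues bounded above and below by multiples of $n$, contributing $(d-1)\log n+O(1)$. The Schur complement in the $\theta$ direction equals $\sum_{j<n}\pi_j^2$ minus a correction controlled by the cross term, and it is $\asymp E_n+R_n+O(1)\asymp E_n+\log n$, contributing $\log(E_n+\log n)+O(1)$. An $O(1)$ error is absorbed into $O(\sqrt{\log n\,\log\log\log n})$. Since $\ell_n\ge(d-1)\log n\to\infty$, the ratio statement $R_n/(\sigma^2\ell_n)\to1$ follows.
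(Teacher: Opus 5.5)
Your architecture matches the paper's. Your one-step identity is \eqref{eq:energy-step}, and items (iii)--(v) are what Lemma~\ref{lem:general-energy} packages: Lemma~\ref{lem:energy} shows that $R_n+2\sum_{t<n}\varepsilon_tw_{t+1}+V_n-\sigma^2\ell_n$ converges. Your direction split is the paper's bound $h_n=\lambda_n+(\pi_n+k_n^Tx_n)^2/J_n=O(\norm{x_n}^2/n+\pi_n^2/\mu_n)$. The remainder comes from the LIL of Lemma~\ref{lem:bounded-LIL} together with $R_n=O(\log n)$, and \eqref{eq:R-explicit} follows from $\det M_n=J_n\det G_n$ and \eqref{eq:rate-start}.

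One step does not work as written: item (v). With $m$ fixed, $\max_{j\ge m}a_j\cdot R_n$ is only $o(\log n)$. That suffices for $R_n/(\sigma^2\ell_n)\to1$, but not for the remainder $O(\sqrt{\log n\,\log\log\log n})$. Your proposed sharpening through $\varepsilon_j^2\lesssim\norm{\tilde\theta_j}^2\norm{\ph_j}^2$ and \eqref{eq:uniform-rate} breaks down. The slow rate in \eqref{eq:uniform-rate} is the scale error. It enters
$\varepsilon_j=-(s_j-1)(\pi_j+k_j^Tx_j)-\rho_j^Tx_j$
only through $\pi_j+k_j^Tx_j$, whose energy is the scale information $J_n$. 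Plain Cauchy--Schwarz instead charges it against the whole regressor, whose energy is of order $n$. Using only \eqref{eq:uniform-rate}, $a_j=O(1/\log j)$ and $\sum_{j<n}\norm{\ph_j}^2=O(n)$, your bound for $\sum_{j<n}\varepsilon_j^2a_j$ is $O(n\log\log\log n/(\log n)^2)$. The finer split $a_j\asymp\pi_j^2/\mu_j+\norm{x_j}^2/j$ still leaves terms such as $\sum_{j<n}\norm{\tilde\theta_j}^2\norm{\ph_j}^2\norm{x_j}^2/j$. With only $\nu>2$ moments, the available estimates ($\max_{j<n}\norm{x_j}^2=O(n^\gamma)$, $\sum_{j<n}\norm{x_j}^2=O(n)$) control these only up to a power of $n$.

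The repair, which is what the paper uses, is Cauchy--Schwarz in the $P_j^{-1}$ metric: $\varepsilon_j^2=(\tilde\theta_j^T\ph_j)^2\le a_jV_j$.
\begin{enumerate}
\item Since $\varepsilon_j=o(1)$ by Lemma~\ref{lem:bounded-predictor}, $\pi_j$ is bounded. Your split together with $\log n=O(\mu_n)$ then gives $a_j=O(1/\log j)$.
\item With $V_j=O(H_j)$ from your item (i), $a_j^2V_j=O(a_j^{\nu_0})$ for any $1<\nu_0<2$.
\item Hence $\sum_j\varepsilon_j^2a_j\le\sum_ja_j^2V_j<\infty$, by the summability of $a_j^{\nu_0}$ that your leverage argument already provides.
\end{enumerate}
Alternatively, Abel summation of $a_j=O(1/\log j)$ against $R_j=O(\log j)$ gives $O(\log\log n)$.

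Two smaller points. First, the LIL in (ii) needs bounded predictable coefficients, again supplied by Lemma~\ref{lem:bounded-predictor}. The local convergence theorem alone would give only $O(\sqrt{\log n}\,(\log\log n)^{1/2+\eta})$. Second, in your last paragraph the intermediate comparison of the Schur complement with $E_n+R_n$ is not available a priori, since $R_n\gtrsim\log n$ is a consequence of the theorem rather than an input. Cite \eqref{eq:rate-start} directly instead. Its lower bound $\log n=O(\mu_n)$ comes from the auxiliary noise prediction in Lemma~\ref{lem:bootstrap}.
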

The proof is given in Section~\ref{sec:residual-proof}.

\begin{remark}
For known input gain, Guo~\cite[Theorem~5.1]{Guo95} established a logarithm law with coefficient $(d-1)\sigma^2$. For unknown gain, the corresponding coefficient is $d\sigma^2$ under a linear lower bound on the smallest eigenvalue of the reference information matrix~\cite[Theorem~4.2]{Guo94}, \cite[Theorem~7.2(ii)]{Guo95}. Theorem~\ref{thm:cost} treats unknown gain and every bounded reference, giving a common expansion with an explicit remainder and reference energy entering through $\log(E_n+\log n)$.
\end{remark}

\begin{corollary}\label{cor:sharp}
Under the conditions of Theorem~\ref{thm:basic}, with $d\ge2$, suppose $y_n^*=0$ for every $n$. Then, almost surely,
\begin{equation}\label{eq:zero-info}
 R_n\sim(d-1)\sigma^2\log n,
\end{equation}
and
\begin{align}
 \limsup_{n\to\infty}\frac{\log n}{\log\log\log n}(s_n-1)^2
 &=\frac{2}{d-1},\label{eq:scale-sharp}\\
 \limsup_{n\to\infty}\frac{\log n}{\log\log\log n}
       \norm{\thh_n-\theta}^2
 &=\frac{2\norm\theta^2}{d-1}.\label{eq:vector-sharp}
\end{align}
\end{corollary}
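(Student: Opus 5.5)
The plan is to obtain \eqref{eq:zero-info} directly from Theorem~\ref{thm:cost}, and then to derive the two limsup identities from an exact law of the iterated logarithm for the scale error. When $y_n^*=0$ we have $E_n=0$. Then \eqref{eq:R-explicit} gives
\[
 R_n=\sigma^2\{(d-1)\log n+\log\log n\}+O\bigl(\sqrt{\log n\,\log\log\log n}\bigr),
\]
and the last two terms are $o(\log n)$. This proves \eqref{eq:zero-info}. For the parameter statements we use the decomposition \eqref{eq:coords}. Theorem~\ref{thm:rates} gives $\norm{\ups_n}^2=O(\log\log n/n)$, which is negligible at the scale $\log\log\log n/\log n$. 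Since $\norm{\thh_n-\theta}^2=(s_n-1)^2\norm\theta^2+\norm{\ups_n}^2$ by orthogonality, \eqref{eq:vector-sharp} follows from \eqref{eq:scale-sharp}. It therefore suffices to prove \eqref{eq:scale-sharp}.

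For the scale error we use the LS identity $\thh_n-\theta=P_n(P_0^{-1}(\thh_0-\theta)+\sum_{j<n}\ph_jw_{j+1})$ and project onto the $\theta$ direction. Under zero reference the CE law gives $\thh_j^T\ph_j=0$, so $\theta^T\ph_j=\pi_j=\varepsilon_j$. The information in the $\theta$ direction is therefore $\theta^TP_n^{-1}\theta\approx R_n\sim(d-1)\sigma^2\log n$. In the orthogonal directions the regressors are driven by noise, and $Q^TP_n^{-1}Q\asymp n$; this is the estimate already used in Section~\ref{sec:rates} to obtain the $\ups_n$ rate. A Schur complement computation then expresses $s_n-1$ as
\[
 s_n-1=\frac{-M_n+O(\text{cross terms})}{\norm\theta^2\,\theta^TP_n^{-1}\theta/\norm\theta^2+\cdots},
 \qquad M_n=\sum_{j<n}\varepsilon_jw_{j+1}.
\]
Here we use $\thh_n^T\ph_n=0$, which gives $s_n\varepsilon_n=-\ups_n^TQ^T\ph_n$, hence $\varepsilon_n=-(s_n-1)\varepsilon_n-\ups_n^TQ^T\ph_n$. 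The cross terms are of order $\sqrt{\log n}\cdot\sqrt{\log\log n}$ times lower-order factors, and we show they are $o(\sqrt{\log n\,\log\log\log n})$ using Theorem~\ref{thm:rates}. The leading behaviour is $(s_n-1)\,\theta^TP_n^{-1}\theta\approx-\theta^T\sum_{j<n}\ph_jw_{j+1}=-M_n$, after normalizing by $\norm\theta^2$ consistently. This is a martingale with quadratic variation $\sigma^2R_n\sim(d-1)\sigma^4\log n$.

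Next we apply the law of the iterated logarithm for martingales, in Stout's form, using the conditional $\nu$th moment bound with $\nu>2$. It gives
\[
 \limsup \frac{M_n^2}{2\langle M\rangle_n\log\log\langle M\rangle_n}=1,
 \qquad \langle M\rangle_n\sim(d-1)\sigma^4\log n .
\]
Stout's theorem requires a Lindeberg-type condition, which holds because $\varepsilon_j\to0$ and the moments are bounded. Dividing by $(\theta^TP_n^{-1}\theta)^2\sim\bigl((d-1)\sigma^2\log n\bigr)^2$ gives
\[
 \limsup\frac{\log n}{\log\log\log n}(s_n-1)^2=\frac{2(d-1)\sigma^4}{(d-1)^2\sigma^4}=\frac{2}{d-1},
\]
which is \eqref{eq:scale-sharp}.

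The main obstacle is to control the self-referential structure. The information $\theta^TP_n^{-1}\theta=\sum(\theta^T\ph_j)^2+O(1)$ equals $R_n$ up to the normalization by $\norm\theta^2$, and $R_n$ is itself random and tied to $M_n$. We must also show that the cross information $\sum\varepsilon_jQ^T\ph_j$ between the $\theta$ direction and $\theta^\perp$ contributes only $o(\sqrt{\log n\,\log\log\log n})$ after the Schur complement. To handle the cross information, we first bound it by Cauchy--Schwarz as $O(\sqrt{n\log n})$. Combined with $(Q^TP_n^{-1}Q)^{-1}=O(1/n)$, this gives a correction of order $\log n$ to the denominator, and that correction must be shown to be $o(\log n)$. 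We expect that step to need the sharper martingale estimate $\sum\varepsilon_jQ^T\ph_j=O(\sqrt{R_n\,n})$ refined through the CE identity above, and we will rely on the closed-loop estimates from Section~\ref{sec:rates}.
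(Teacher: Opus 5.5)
Your skeleton is the paper's: \eqref{eq:zero-info} from Theorem~\ref{thm:cost} with $E_n=0$, and \eqref{eq:vector-sharp} reduced to \eqref{eq:scale-sharp} by orthogonality and the $\ups_n$ rate. For the scale error you also use the $\theta$-row of the normal equations and an exact LIL for $T_n=T_0+\sum_{j<n}\pi_jw_{j+1}$, with $\mu_n=\theta^TP_n^{-1}\theta=\mu_0+R_n\sim(d-1)\sigma^2\log n$. Two slips do not affect the squared limsup: the LS error is $+P_n\sum_{j<n}\ph_jw_{j+1}$, so your sign is reversed, and no $\norm\theta^2$ normalization enters.

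The gap is in the cross-information $g_n=g_0+\sum_{t<n}\pi_tx_t$, with $x_t=Q^T\ph_t$, which is the only step with real content. For the exact constant you need one of two things:
\begin{itemize}
\item $\mu_n-J_n=g_n^TG_n^{-1}g_n=o(\log n)$ and $k_n^TS_n=o(\sqrt{\log n\,\log\log\log n})$ in $J_n(s_n-1)=T_n+k_n^TS_n$; or
\item $g_n^T\ups_n=o(\sqrt{\log n\,\log\log\log n})$ in the unreduced row $\mu_n(s_n-1)+g_n^T\ups_n=T_n$, whose denominator is exactly $\mu_0+R_n$.
\end{itemize}
Cauchy--Schwarz gives only $\norm{g_n}=O(\sqrt{n\log n})$. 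Then $g_n^TG_n^{-1}g_n$ is of the same order $\log n$ as $\mu_n$. With $\norm{\ups_n}^2=O(\log\log n/n)$, the term $g_n^T\ups_n$ is only $O(\sqrt{\log n\,\log\log n})$, too large by the unbounded factor $\sqrt{\log\log n/\log\log\log n}$. So your claim that the cross terms are $o(\sqrt{\log n\,\log\log\log n})$ ``using Theorem~\ref{thm:rates}'' does not follow. The ``sharper'' estimate you propose, $\sum_j\varepsilon_jQ^T\ph_j=O(\sqrt{R_nn})$, is the same Cauchy--Schwarz bound again, since $R_n\asymp\log n$. Moreover, each $\pi_tx_t$ is $\F_t$-measurable, so no martingale estimate applies to this sum.

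What is missing is a pathwise substitution through the control law, not a martingale bound. With $y^*\equiv0$, \eqref{eq:scale-ce} gives $\pi_t=-s_t^{-1}\ups_t^Tx_t$. Hence $g_n=O(1)-\sum_{t=n_0}^{n-1}s_t^{-1}x_tx_t^T\ups_t$, where $|s_t|\ge1/2$ for $t\ge n_0$. Use the time-$t$ rate $\norm{\ups_t}^2=O(H_t/t)$ and Abel summation with $\sum_{t<n}\norm{x_t}^2=O(n)$:
\[
 \norm{g_n}=O\Bigl(\sqrt{H_n}\sum_{t=n_0}^{n-1}\frac{\norm{x_t}^2}{\sqrt t}\Bigr)=O(\sqrt{nH_n}).
\]
This improves on Cauchy--Schwarz by the factor $\sqrt{\log n/\log\log n}$. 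Together with $G_n\asymp nI$ and $S_n=O(\sqrt{nH_n})$, it gives $\mu_n-J_n=O(H_n)$, $k_n^TS_n=O(H_n)$ and $g_n^T\ups_n=O(H_n)$, all negligible. Your LIL computation then goes through. This bound is Lemma~\ref{lem:cross-block} specialized to $E_n=0$, and it is what the paper's proof of the corollary invokes.
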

The proof is given in Section~\ref{sec:residual-proof}.
\begin{remark}\label{rem:zero-limits}
With known input gain, Guo~\cite[Theorem~5.1]{Guo95} established the squared parameter error bound $O(\log\log n/n)$. With unknown gain and zero reference, Corollary~\ref{cor:sharp} shows that the slower overall rate in \eqref{eq:uniform-rate} is sharp, while Corollary~\ref{cor:ratios} retains the faster bound for parameter ratios. The slow rate also occurs in every coordinate with a nonzero true coefficient. Specifically, for each $j\in\{1,\ldots,d\}$ with $\theta_j\ne0$,
\[
 \limsup_{n\to\infty}\frac{\log n}{\log\log\log n}
 (\thh_n-\theta)_j^2=\frac{2\theta_j^2}{d-1}\as
\]
\end{remark}

\subsection{Asymptotic confidence ellipsoids}
Normalization by the observed LS information matrix yields asymptotic confidence ellipsoids.
Define the observable noise variance estimate
\[
 \widehat\sigma_n^2=\frac1n\sum_{j=1}^n(y_j-y_j^*)^2.
\]
\begin{theorem}\label{thm:distribution}
Under the conditions of Theorem~\ref{thm:basic}, with $d\ge2$,
\begin{equation}\label{eq:chisquare}
 \frac{(\thh_n-\theta)^TP_n^{-1}(\thh_n-\theta)}{\widehat\sigma_n^2}
 \xrightarrow{d}\chi_d^2.
\end{equation}
The statistic may be assigned any value when $\widehat\sigma_n^2=0$.
\end{theorem}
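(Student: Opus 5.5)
Since the LS estimate satisfies $P_{n}^{-1}(\thh_n-\theta)=P_0^{-1}(\thh_0-\theta)+S_n$ with $S_n=\sum_{j=0}^{n-1}\ph_jw_{j+1}$, the statistic equals, up to a negligible initial term, $S_n^TP_nS_n/\widehat\sigma_n^2$. My plan is to prove a martingale central limit theorem for $S_n$ under a random, pathwise normalization, and to handle the scale direction $\theta$ separately from $\theta^\perp$. First I deal with the denominator and the initial term. By Theorem~\ref{thm:basic}, optimal tracking \eqref{eq:tracking} gives $\widehat\sigma_n^2\to\sigma^2$ a.s. The initial term $P_0^{-1}(\thh_0-\theta)$ is a fixed vector $c$. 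Its contribution $c^TP_nc+2c^TP_nS_n$ vanishes provided $\lambda_{\min}(P_n^{-1})\to\infty$, which follows from consistency (Theorem~\ref{thm:rates}). More precisely, I expect $\theta^TP_n^{-1}\theta\asymp E_n+\log n$ and $Q^TP_n^{-1}Q\asymp n$. So it suffices to show $S_n^TP_nS_n\xrightarrow{d}\sigma^2\chi^2_d$.

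Next I set up the normalization. Along the trajectory, $\theta^T\ph_j=\pi_j=y_{j+1}^*+\varepsilon_j$, and $R_n=O(\log n)$. Hence $P_n^{-1}=\sum_j\ph_j\ph_j^T+P_0^{-1}$ has the following structure in the basis $(\theta,Q)$. The $\theta\theta$ entry is $\|\theta\|^2$-scaled $E_n+R_n+O(1)$ plus cross terms. The $Q^\perp$ block is of order $n$. The cross block is controlled by Cauchy--Schwarz. I introduce the deterministic-free diagonal scaling $D_n=\diag\bigl((\theta^TP_n^{-1}\theta)^{-1/2},\,n^{-1/2}I_{d-1}\bigr)$ in these coordinates. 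I then aim to show that $D_nP_n^{-1}D_n$ is asymptotically block diagonal. The off-diagonal entry is $\sum\pi_j Q^T\ph_j/\sqrt{n(E_n+\log n)}$. I would bound it using the identification analysis, which implies that $Q^T\ph_j$ is asymptotically uncorrelated with $y^*_{j+1}$ and with past noise. Failing that, I would at least show that its fluctuations are small relative to the determinant.

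The core step is a CLT for $M_n=D_nS_n$. Because $D_n$ is random, and $E_n$ need not grow regularly since the reference correlations need not converge, I cannot apply the standard CLT directly. Instead I use a stopping-time (random-norming) argument, or a martingale CLT with a random limiting covariance. The latter is Hall--Heyde's stable convergence: the conditional variance $D_n\bigl(\sum\ph_j\ph_j^T\sigma^2\bigr)D_n$ converges in probability to $\sigma^2 I$ after an orthogonal change of basis. The Lindeberg condition follows from the $\nu>2$ moment bound, since $\max_j\|D_n\ph_j\|\to0$. For the $\theta$ coordinate this requires $\max_{j<n}\pi_j^2/(E_n+\log n)\to0$, which holds because $\pi_j^2\le 2(y^*_{j+1})^2+2\varepsilon_j^2$, the reference is bounded, and $\varepsilon_j^2\le R_n=O(\log n)$. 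That last bound alone is insufficient, so I would use a sharper bound $\varepsilon_j^2=o(\log n)$, derived from $\varepsilon_j=(\theta-\thh_j)^T\ph_j$ and the rates in Theorem~\ref{thm:rates}.

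To avoid requiring convergence of the normalized information matrix, I would normalize by the exact square root: $P_n^{1/2}S_n$. For each unit vector $\lambda$, I apply the CLT to $\sum_j\lambda^TP_n^{1/2}\ph_jw_{j+1}$. This is done via the Cramér--Wold device combined with stable convergence over the nested array, using the fact that the conditional variance is exactly $\sigma^2\lambda^T P_n^{1/2}(P_n^{-1}-P_0^{-1})P_n^{1/2}\lambda\to\sigma^2$. The main obstacle is that $P_n^{1/2}$ depends on the future. My first attempt would be to replace $P_n$ by a predictable surrogate, for example $P_{\tau}$ at stopping times where $\theta^TP^{-1}\theta$ crosses levels $k$, together with a Lai--Wei type argument that $P_n^{-1}$ is asymptotically equivalent in the positive semidefinite order to a predictable matrix. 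That equivalence uses the separation of scales $E_n+\log n\ll n$ or comparable, plus the near block-diagonality above. The resulting limit is $\sigma^2\chi^2_d$, and dividing by $\widehat\sigma_n^2$ gives \eqref{eq:chisquare}.
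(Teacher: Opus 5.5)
Your reductions are fine: the initial term, the denominator, and the Lindeberg condition from $\nu>2$. The core step has a genuine gap. You never produce a normalization under which the information matrix is asymptotically \emph{nonrandom}, and a martingale CLT at deterministic times $n$ needs exactly that. Each of your three attempts falls short:
\begin{itemize}
\item With the normalizer $P_n^{1/2}$, the weights $\lambda^TP_n^{1/2}\ph_j$ are not $\F_j$-measurable, so the identity for the ``conditional variance'' proves nothing.
\item A CLT at the stopping times where $\theta^TP^{-1}\theta$ crosses level $k$ holds only at those random times. Transferring it to fixed $n$ (an Anscombe-type step) again requires $\theta^TP_n^{-1}\theta$, divided by a deterministic sequence, to converge.
\item Comparability in the semidefinite order with a predictable matrix buys nothing, since $P_n^{-1}$ is already predictable. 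The Lai--Wei stability condition asks for nonrandom $B_n$ with $B_n^{-1/2}P_n^{-1}B_n^{-1/2}\to I$, which is ratio convergence, not $\asymp$.
\end{itemize}
Without such a condition, self-normalized LS statistics need not be $\chi^2$; the unit-root autoregression is the standard counterexample.

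Here this means you need $\mu_n=\theta^TP_n^{-1}\theta\sim E_n+(d-1)\sigma^2\log n$, not merely $\mu_n\asymp E_n+\log n$. Under zero reference $\mu_n=\mu_0+R_n$, so $R_n=O(\log n)$ is not enough: $R_n/\log n$ must converge. That is supplied by the residual law of Theorem~\ref{thm:cost}, which gives $R_n\sim(d-1)\sigma^2\log n$. In general the paper (Lemma~\ref{lem:information-asymptotics}) also needs $\sum_{t<n}y_{t+1}^*\varepsilon_t=o(E_n+\log n)$. This is proved with the auxiliary noise prediction $f_t$ of Lemma~\ref{lem:aux}, and nothing in your plan supplies it.

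Your treatment of the cross block is also wrong. Write $x_j=Q^T\ph_j=z_j+d_j+\delta_j$. The reference component $d_j=Q^T(\ph_j^o-\ph_j^w)$ is a deterministic filter of the reference and is in general correlated with $y_{j+1}^*$. For a constant reference $c\ne0$, $d_j$ tends to a generally nonzero constant $\bar d$, and $g_n/\sqrt{n\mu_n}\to\operatorname{sign}(c)\,\bar d$. So your normalized matrix $D_nM_nD_n$ is not asymptotically block diagonal, and for irregular references it need not converge at all.

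The paper handles both issues at once. Conditionally on the reference path, it defines the deterministic matrix $\bar M_n$ with blocks $\bar\mu_n=E_n+(d-1)\sigma^2\log n$, $\bar g_n=\sum_ty_{t+1}^*d_t$ and $\bar G_n=n\Omega+\sum_td_td_t^T$. It proves $\bar M_n^{-1/2}M_n\bar M_n^{-1/2}\to I_d$ almost surely, using Lemma~\ref{lem:information-asymptotics} for $\mu_n$, Lemma~\ref{lem:cross-block} for $g_n-\bar g_n$, and $G_n-\bar G_n=o(n)$. It then applies the ordinary Hall--Heyde array CLT to the predictable coefficients $\bar M_n^{-1/2}(\pi_t,x_t^T)^T$. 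Because $\bar M_n$ absorbs the irregular reference, no convergence of empirical correlation matrices and no random norming is needed.

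A smaller point: $\norm{\theta-\thh_j}^2\norm{\ph_j}^2$ does not give $\varepsilon_j^2=o(\log n)$, since $\norm{\ph_j}^2$ may grow like $j^\gamma$. Use instead $s_j\varepsilon_j=-(s_j-1)y_{j+1}^*-\ups_j^Tx_j$ with $\norm{\ups_j}^2=O(H_j/j)$, which gives $\varepsilon_j=o(1)$ as in Lemma~\ref{lem:bounded-predictor}.
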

The proof is given in Section~\ref{sec:limit-proofs}.
For $0<\alpha<1$, let $\chi^2_{d,1-\alpha}$ be the $(1-\alpha)$-quantile of $\chi_d^2$. The confidence ellipsoid
\[
 \left\{\vartheta\in\R^d:
 (\thh_n-\vartheta)^TP_n^{-1}(\thh_n-\vartheta)
 \le\widehat\sigma_n^2\chi^2_{d,1-\alpha}\right\}
\]
has coverage tending to $1-\alpha$ for each bounded reference, even when its empirical correlation matrices do not converge.

\section{Proof of Stability, Tracking and Consistency}\label{sec:identification}
We first consider $d\ge2$ and a fixed bounded deterministic reference. All asymptotic statements in this section hold on an event of probability one, for the fixed reference and exponents under consideration.

\subsection{Noise information and LS identities}
For zero reference and zero initial histories, define the ideal input by
\begin{equation}\label{eq:ideal-noise}
 B(z)u_n^w=[A(z)-1]w_{n+1}.
\end{equation}
Form $\ph_n^w$ from output $w_n$ and input $u_n^w$, in the same lag order as $\ph_n$. The filter from noise to input is causal and stable because $A-1$ has zero constant term and $B^{-1}$ is stable.
\begin{lemma}\label{lem:geometry}
Under Assumption~\ref{ass:plant}, $\theta^T\ph_n^w=0$ for every $n\ge0$. For $Q$ defined in \eqref{eq:coords},
\begin{equation}\label{eq:z}
 z_n\triangleq Q^T\ph_n^w=\sum_{\ell\ge0}C_\ell w_{n-\ell},\qquad
 \norm{C_\ell}=O(\vartheta^\ell),
\end{equation}
with deterministic filter coefficients $C_\ell\in\R^{d-1}$ and a constant $0<\vartheta<1$. Moreover,
\[
 \Omega\triangleq \sigma^2\sum_{\ell\ge0}C_\ell C_\ell^T>0.
\]
\end{lemma}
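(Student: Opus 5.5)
Three things need proving: the identity $\theta^T\ph_n^w=0$, the exponentially decaying filter representation, and positivity of $\Omega$. The identity is a direct computation. By definition, $\theta^T\ph_n^w=\sum_{i=1}^p(-a_i)w_{n-i+1}+\sum_{j=1}^q b_ju^w_{n-j+1}$. In operator form this is $-[A(z)-1]w_{n+1}+[B(z)u^w]_n$, because $\sum_i a_i z^{i}w_{n+1}=\sum_i a_iw_{n+1-i}$ and $B(z)u^w_n=\sum_j b_ju^w_{n-j+1}$. By \eqref{eq:ideal-noise} this vanishes for every $n\ge0$. Zero initial histories and $w_n=0$ for $n\le0$ make the operator identities exact from time zero.

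For the representation, we use that $B$ has no zeros in $|z|\le1$, so $B(z)^{-1}$ is analytic on a disc of radius $\vartheta^{-1}>1$. Hence $G(z)\triangleq B(z)^{-1}[A(z)-1]z^{-1}$ has a power series with geometrically decaying coefficients. The factor $z^{-1}$ is harmless because $A-1$ has zero constant term. Consequently $u_n^w=[G(z)w]_n$. Each coordinate of $\ph_n^w$ is either $w_{n-i+1}$ or $u^w_{n-j+1}$, so $\ph_n^w=\sum_{\ell\ge0}D_\ell w_{n-\ell}$ with $D_\ell\in\R^d$ and $\norm{D_\ell}=O(\vartheta^\ell)$. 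Setting $C_\ell=Q^TD_\ell$ gives \eqref{eq:z}. Since $w_n=0$ for $n\le0$, the sum is effectively finite.

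Positivity of $\Omega$ is the main step. Suppose $x\in\R^{d-1}$ satisfies $x^TC_\ell=0$ for every $\ell$, and set $v=Qx\perp\theta$. Because $QQ^T$ is the projection onto $\theta^\perp$ and $\theta^TD_\ell=0$ for all $\ell$ (by the identity above applied coefficientwise), we have $D_\ell\in\theta^\perp$ and hence $v^TD_\ell=x^TC_\ell=0$. Write $v=(\alpha_1,\ldots,\alpha_p,\beta_1,\ldots,\beta_q)$ and put $\alpha(z)=\sum_i\alpha_iz^{i-1}$, $\beta(z)=\sum_j\beta_jz^{j-1}$. The transfer function $\sum_\ell v^TD_\ell z^\ell$ from $w$ to $v^T\ph^w$ is $\alpha(z)+\beta(z)G(z)$, so it vanishes identically. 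Multiplying by $zB$ gives the polynomial identity
\[
 z\alpha(z)B(z)+\beta(z)[A(z)-1]=0 .
\]
Since $\gcd(A-1,B)=1$ and $z\nmid B$ (because $b_1\neq0$), $B$ must divide $\beta$. As $\deg\beta\le q-1$, we get $\beta=cB/\mathrm{lead}$ in the generic-degree case.

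Here we need care with degrees. Write $A-1=z\tilde A$ with $\deg\tilde A\le p-1$, so the identity reduces to $\alpha B=-\beta\tilde A$. If $\deg B=q-1$, then $B\mid\beta$ forces $\beta=cB$, hence $\alpha=-c\tilde A$ and $v=-c\theta$. If instead $\deg B<q-1$, the degree condition gives $\deg\tilde A=p-1$. Coprimeness of $\tilde A$ and $B$ then gives $\tilde A\mid\alpha$, and $\deg\alpha\le p-1$ forces $\alpha=c'\tilde A$, hence $\beta=-c'B$ and again $v\parallel\theta$. The boundary cases $p=0$ (so $\alpha$ is absent, $A-1=0$, and $\beta=0$ directly) and $\tilde A=0$ are handled the same way.

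In every case $v\in\operatorname{span}\theta\cap\theta^\perp=\{0\}$, so $x=0$. Therefore $x^T\Omega x=\sigma^2\sum_\ell(x^TC_\ell)^2>0$ for every $x\ne0$, which proves $\Omega>0$. The expected obstacle is exactly this degree/coprimeness bookkeeping, where Assumption~\ref{ass:plant} is used in full.
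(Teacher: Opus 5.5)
Your proposal is correct and follows essentially the paper's argument: coprimeness plus the degree condition show that the transfer function of $v^T\ph_n^w$ vanishes only for $v\in\operatorname{span}\{\theta\}$, so a null vector $h$ of $\Omega$ gives $Qh\in\operatorname{span}\{\theta\}\cap\theta^\perp=\{0\}$. The paper uses $B\mid B_v$ in both degree cases, whereas you switch to $\tilde A\mid\alpha$ in the second; this difference is immaterial.

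There are two harmless slips. In your first case the conclusion is $v=c\theta$, not $-c\theta$. For $p=0$ the polynomial identity places no constraint on $\beta$ at all, since then $\ph_n^w\equiv0$, so your claim that $\beta=0$ follows directly is false. That case should instead be dismissed by noting that Assumption~\ref{ass:plant} with $p=0$ forces $B$ to be constant and $q=1$. This gives $d=1$, which lies outside the lemma's setting $d\ge2$.
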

\begin{proof}
For $v\in\R^d$, put $A_v(z)\triangleq -\sum_{i=1}^pv_i z^i$ and $B_v(z)\triangleq \sum_{j=1}^qv_{p+j}z^{j-1}$. Elimination in \eqref{eq:ideal-noise} gives
\[
 v^T\ph_n^w=\left[\frac{(A-1)B_v-BA_v}{B}w\right]_{n+1}.
\]
The numerator vanishes for $v=\theta$. Conversely, coprimeness gives $B\mid B_v$ whenever it vanishes, so $B_v=BH$ and $A_v=(A-1)H$. If $H\ne0$ and $\deg B=q-1$, the bound $\deg B_v\le q-1$ gives $\deg H=0$. If instead $\deg(A-1)=p$, the bound $\deg A_v\le p$ gives the same conclusion. The case $H=0$ gives $v=0$. Thus the null direction is exactly $\operatorname{span}\{\theta\}$. Stability of $B^{-1}$ gives \eqref{eq:z}. If $h^T\Omega h=0$, the transfer function for $Qh$ vanishes. Thus $Qh$ is parallel to $\theta$. Since $Qh\in\theta^\perp$, we obtain $Qh=0$ and hence $h=0$.
\end{proof}

The ideal noise regressor leaves exactly the common scale in \eqref{eq:coords} unidentified. Such a scaling changes the input gain $b_1$, so information in the orthogonal directions alone does not determine the gain. To examine how the actual recursion constrains this scale, put $L_\theta\triangleq [\theta\ Q]$ and $x_n\triangleq Q^T\ph_n$. In these coordinates, the controller becomes
\begin{equation}\label{eq:scale-ce}
 s_n\pi_n+\ups_n^Tx_n=y_{n+1}^*.
\end{equation}
Write the transformed information matrix as
\begin{equation*}
 M_n\triangleq L_\theta^TP_n^{-1}L_\theta
   =M_0+\sum_{j=0}^{n-1}\binom{\pi_j}{x_j}\binom{\pi_j}{x_j}^{\!T}
   =\begin{pmatrix}\mu_n&g_n^T\\g_n&G_n\end{pmatrix}.
\end{equation*}
Here $\mu_0,g_0,G_0$ are the corresponding blocks of $M_0$. In particular, $\mu_n=\mu_0+\sum_{j=0}^{n-1}\pi_j^2$ and $G_n=G_0+\sum_{j=0}^{n-1}x_jx_j^T$. Because $L_\theta$ is nonsingular and $P_n^{-1}>0$, one has $M_n>0$ and $G_n>0$. Define
\begin{equation*}
 k_n\triangleq -G_n^{-1}g_n,\quad J_n\triangleq \mu_n-k_n^TG_nk_n>0,\quad
 \rho_n\triangleq \ups_n-k_n(s_n-1),\quad U_n\triangleq \rho_n^TG_n\rho_n.
\end{equation*}
The Schur complement $J_n$ is the scale information after eliminating the transverse regressors.
Completing the square yields
\begin{equation}\label{eq:V}
 V_n\triangleq (\thh_n-\theta)^TP_n^{-1}(\thh_n-\theta)
     =J_n(s_n-1)^2+U_n.
\end{equation}
The normal equations give
\begin{align}
 G_n\rho_n&=S_n\triangleq S_0+\sum_{j=0}^{n-1}x_jw_{j+1},\label{eq:S}\\
 J_n(s_n-1)&=T_n+k_n^TS_n,\qquad
 T_n\triangleq T_0+\sum_{j=0}^{n-1}\pi_jw_{j+1}.\label{eq:T}
\end{align}
Here $(T_0,S_0^T)^T\triangleq L_\theta^TP_0^{-1}(\thh_0-\theta)$. Both $x_j$ and $\pi_j$ are $\F_j$-measurable.

Write $\Delta J_n\triangleq J_{n+1}-J_n$, put $\lambda_n\triangleq x_n^TG_n^{-1}x_n$, and set
$\chi_n\triangleq (k_n-\rho_n)^Tx_n+y_{n+1}^*$. The control equation and information update give
\begin{equation}\label{eq:schur-increment}
 \Delta J_n=\frac{(\pi_n+k_n^Tx_n)^2}{1+\lambda_n},\quad
 \chi_n=s_n(\pi_n+k_n^Tx_n),\quad
 \chi_n^2=s_n^2(1+\lambda_n)\Delta J_n.
\end{equation}
Appendix~\ref{app:algebra} derives \eqref{eq:S}--\eqref{eq:schur-increment}.
To use \eqref{eq:schur-increment} over a time interval, we control the variation of $k_t$, which enters $\chi_t$. The quadratic function $\mu_t+2g_t^Tk+k^TG_tk$ is minimized at $k_t$ and equals $J_t+\norm{k-k_t}_{G_t}^2$. For integers $n>t\ge0$, adding the observations from $t$ through $n-1$ and evaluating at $k_n$ gives
\begin{equation}\label{eq:projection-variation}
 J_n=J_t+\norm{k_n-k_t}_{G_t}^2
           +\sum_{j=t}^{n-1}(\pi_j+k_n^Tx_j)^2.
\end{equation}
Consequently $J_n$ is nondecreasing and
$\norm{k_n-k_t}^2\le J_n/\lambda_{\min}(G_t)$.

Use $z_n$ to form an auxiliary prediction of $w_{n+1}$:
\begin{equation}\label{eq:aux}
 G_n^w\triangleq G_0+\sum_{j=0}^{n-1}z_jz_j^T,\quad
 S_n^w\triangleq S_0+\sum_{j=0}^{n-1}z_jw_{j+1},\quad
 f_n\triangleq (S_n^w)^T(G_n^w)^{-1}z_n.
\end{equation}
The corresponding prediction formed from the actual regressors is $\widehat f_n\triangleq \rho_n^Tx_n$.
\begin{lemma}\label{lem:aux}
For each fixed $0<\alpha<1$, let $c_\alpha\triangleq (d-1)\sigma^2(1-\alpha)/4$. For every bounded deterministic reference, almost surely,
\begin{equation}\label{eq:aux-law}
 \sum_{j=1}^{n-1}f_j^2\sim(d-1)\sigma^2\log n,
\end{equation}
and, eventually,
\begin{equation}\label{eq:aux-reference}
 \sum_{j=\lceil n^\alpha\rceil}^{n-1}(y_{j+1}^*-f_j)^2
 \ge\frac12\sum_{j=\lceil n^\alpha\rceil}^{n-1}(y_{j+1}^*)^2+c_\alpha\log n.
\end{equation}
Moreover, along any subsequence on which $E_n=O(\log n)$, one has $\sum_{j=0}^{n-1}y_{j+1}^*f_j=o(\log n)$.
\end{lemma}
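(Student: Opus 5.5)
The plan is to treat $f_j$ as the one-step LS prediction error for a stationary, exponentially mixing linear regression driven purely by the noise. The three assertions then follow from the classical LS logarithm law together with martingale estimates for cross terms with the deterministic reference.

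\emph{Step 1: the law \eqref{eq:aux-law}.} By Lemma~\ref{lem:geometry}, $z_j$ is a stable linear filter of the martingale difference noise. Assumption~\ref{ass:noise} (conditional variance $\sigma^2$, $\nu>2$ moments) and the exponential decay of $C_\ell$ give $G_n^w/n\to\Omega>0$ a.s. via a martingale SLLN applied to $\sum_j(z_jz_j^T-\E[z_jz_j^T\mid\F_{j-1}])$ and the deterministic limit of the predictable part. In particular $\log\det G_n^w=(d-1)\log n+O(1)$ and $\lambda_{\min}(G_n^w)\asymp n$. Since $f_j=(S_j^w)^T(G_j^w)^{-1}z_j$ is exactly the ordinary LS prediction of $w_{j+1}$ from regressor $z_j$, with prior information $G_0$ and prior term $S_0$, I would invoke the standard LS logarithm law (Lai--Wei; Guo, as in \cite{Guo95}). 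With $\delta_j=z_j^T(G_{j+1}^w)^{-1}z_j\to0$ (because $\|z_j\|^2=o(n^{2/\nu}\cdot)$ and hence $o(n)$), it yields
\[
\sum_{j<n}f_j^2(1-\delta_j)+(S_n^w)^T(G_n^w)^{-1}S_n^w=\sigma^2\log\det G_n^w\,(1+o(1)).
\]
The term $(S_n^w)^T(G_n^w)^{-1}S_n^w=O(\log\log n)$ by the LIL for the vector martingale $S_n^w$ (Kolmogorov/Stout form under $\nu>2$). The correction $\sum f_j^2\delta_j$ is $o(\log n)$. Together these give \eqref{eq:aux-law}.

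\emph{Step 2: the lower bound \eqref{eq:aux-reference}.} Expand
\[
\sum_{j=m}^{n-1}(y_{j+1}^*-f_j)^2=\sum (y_{j+1}^*)^2-2\sum y_{j+1}^*f_j+\sum f_j^2, \qquad m=\lceil n^\alpha\rceil.
\]
Step 1 applied at $n$ and at $m$ gives $\sum_{j=m}^{n-1}f_j^2=(1-\alpha)(d-1)\sigma^2\log n\,(1+o(1))$. For the cross term, write $f_j=\bar S_j^T z_j$ with $\bar S_j=(G_j^w)^{-1}S_j^w$, where $\bar S_j$ is $\F_{j-1}$-measurable and of order $\sqrt{\log\log j/j}$. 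The sum $\sum y_{j+1}^*\bar S_j^T z_j$ is not a martingale, since $z_j$ depends on $w_j,w_{j-1},\dots$. I would therefore substitute $z_j=\sum_\ell C_\ell w_{j-\ell}$ and regroup by noise index. The terms $\bar S_j$ vary slowly ($\|\bar S_{j+\ell}-\bar S_j\|$ is small relative to its size), so up to a remainder $o(\log n)$ one obtains a martingale $\sum_k \eta_k w_k$ whose weights are $\eta_k\approx\sum_\ell y^*_{k+\ell+1}C_\ell^T\bar S_{k+\ell}$ and are predictable.

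The martingale bound $\sum\eta_kw_k=O\big((\sum\eta_k^2)^{1/2+\epsilon}\big)+O(1)$ applies, with $\sum\eta_k^2\lesssim\sum_j (y_{j+1}^{*})^2\|\bar S_j\|^2$ and this in turn $\lesssim \sum f_j^2$-type quantities. This gives
\[
2\Big|\sum y^*_{j+1}f_j\Big|\le \tfrac12\sum (y^*_{j+1})^2+2\sum_j (\cdots)^2 .
\]
I would instead use Cauchy--Schwarz in the form $2|ab|\le a^2/2+2b^2$ only on the non-martingale part, and the martingale bound on the rest. The target is
\[
\Big|\sum_{j=m}^{n-1}y^*_{j+1}f_j\Big|\le \tfrac14\sum(y^*_{j+1})^2+o(\log n)+O\Big(\big(\textstyle\sum_j (y_{j+1}^*)^2\log\log j/j\big)^{1/2+\epsilon}\Big).
\]
Since $y^*$ is bounded, the last sum is $O((\log n\log\log n)^{1/2+\epsilon})=o(\log n)$. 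Then $(1-\alpha)(d-1)\sigma^2\log n(1+o(1))-o(\log n)\ge c_\alpha\log n$ eventually, with room to spare, since $c_\alpha$ is a quarter of the leading constant.

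\emph{Step 3 and main obstacle.} The same decomposition over $[0,n)$ gives $\sum_{j<n}y^*_{j+1}f_j=o(\log n)+O\big((E_n\sup_j\|\bar S_j\|^2)^{1/2+\epsilon}\big)$, more precisely $O((\sum_j (y^*_{j+1})^2\|\bar S_j\|^2)^{1/2+\epsilon})$, which is $o(\log n)$ when $E_n=O(\log n)$. The hard step is the cross term. It requires converting $\sum y^*_{j+1}f_j$ into a genuine martingale despite the colored regressor $z_j$, and controlling the slowly varying coefficients $\bar S_j$ uniformly, for an arbitrary bounded deterministic reference with no excitation or regularity. I would handle the regrouping with summation by parts, using $\|\bar S_{j+1}-\bar S_j\|=O(\|z_j\|(|w_{j+1}|+|f_j|)/j)$ and the exponential tails of $C_\ell$.
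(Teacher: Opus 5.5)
\textbf{Verdict: correct in outline, different route.} Your proposal has the same skeleton as the paper's proof. First comes the logarithm law for the auxiliary LS regression. Then the square is expanded: a square-summable piece of the cross term is absorbed into half the reference energy, and the rest is shown to be $o(\log n)$. Both ingredients, however, are obtained differently.

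\emph{The logarithm law.} The paper does not cite Lai--Wei. It applies its own energy identity, Lemma~\ref{lem:general-energy}, to the auxiliary regression with residual $-f_n$. It verifies $\sum_n(h_n^w)^{\nu_0}<\infty$ and $\sum_n(h_n^w)^2U_n^w<\infty$, and then absorbs $\sum_jf_jw_{j+1}=O((1+A_n)^{3/4})$. Your appeal to the classical law, together with $U_n^w=O(\log\log n)$, is an equivalent shortcut. One small inaccuracy: $\E[z_jz_j^T\mid\F_{j-1}]$ is random, not deterministic. The paper's lag-by-lag argument in Lemma~\ref{lem:noise-estimates} is what actually gives $G_n^w/n\to\Omega$.

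\emph{The cross term.} The paper replaces $f_j$ by $\bar f_j=j^{-1}(S_j^w)^T\Omega^{-1}z_j$. It uses the Gram rate $G_n^w=n\Omega+O(n^{1-\zeta})$ to get $\sum_j(f_j-\bar f_j)^2<\infty$, and only this piece is absorbed by Cauchy--Schwarz. It then proves $\sum_jy_{j+1}^*\bar f_j=o(\log n)$ for every bounded reference (Lemma~\ref{lem:reference-cross}). The argument is an exact summation-by-parts identity built on the finite-dimensional recursion $z_{n+1}=\mathsf Fz_n+\mathsf c\,w_{n+1}$ and the deterministic matrices $\mathsf K_n$. It yields a vanishing boundary term $\Xi_n$, three martingale series, and one deterministic-weighted filter series handled by Lemma~\ref{lem:weighted-series}. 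Every term is explicit because the coefficient is the bare score $S_j^w$ times a deterministic matrix. Your regrouping of the moving-average expansion with the LS coefficients $\bar S_j=(G_j^w)^{-1}S_j^w$ needs neither the Gram rate nor a state-space realization. The price is a remainder driven by increments of the LS estimate, which is harder to control.

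\emph{Points in your sketch that need fixing.}
\begin{enumerate}
\item $\bar S_j$ is $\F_j$-measurable, not $\F_{j-1}$-measurable, since $S_j^w$ contains $z_{j-1}w_j$. So the predictable coefficient of $w_k$ must be built from $\bar S_{k-1}$, namely $\eta_k=\bar S_{k-1}^T\sum_\ell C_\ell y_{k+\ell+1}^*$. Its squares sum to $O(\log n\log\log n)$, so this martingale is $o(\log n)$ for every bounded reference.
\item Slow variation of $\bar S_j$ does not by itself make the remainder $o(\log n)$. The remainder is $r_j=\sum_\ell C_\ell^T(\bar S_j-\bar S_{j-\ell-1})w_{j-\ell}$, with increments $\bar S_{i+1}-\bar S_i=(G_{i+1}^w)^{-1}z_i(w_{i+1}-f_i)$. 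Thus $r_j$ is of order $1/j$ times local noise products. A termwise bound therefore only gives $\sum_{j=m}^{n-1}|y_{j+1}^*r_j|=O(\log(n/m))$, the same order as $\sum_{j=m}^{n-1}f_j^2$.
\item The fix is the alternative you mention. Show $\sum_jr_j^2<\infty$ almost surely and absorb $\sum_jy_{j+1}^*r_j$ into $\tfrac14\sum_j(y_{j+1}^*)^2$. For the last assertion, bound it instead by $\sqrt{E_n\sum_jr_j^2}=O(\sqrt{\log n})$.
\item Because $r_j^2$ contains fourth powers of the noise while only $\nu>2$ moments are assumed, this square-summability must be proved pathwise rather than by taking expectations. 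For instance, use $\max_{k\le n}w_k^2=O(n^\gamma)$, $\sum_{j<n}\norm{z_j}^2w_{j+1}^2=O(n)$, and Abel summation.
\item The $(m,n)$-dependent truncation of the regrouped coefficients is handled as in \eqref{eq:weighted-filter}.
\end{enumerate}
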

Lemma~\ref{lem:aux} supplies the logarithmic lower bound used in Section~\ref{sec:gain-information} to bound $\mu_n$ from below when the gain error persists. Its proof is given in Appendix~\ref{app:auxiliary}.

\subsection{Bounds on the information terms}\label{sec:gain-information}
Put $\ellfun(x)\triangleq \log\log(e^e+x)$ for $x\ge0$, and $H_n\triangleq \ellfun(n)$.
Fix $\eta>0$ and suppose, for a contradiction, that
\[
 \calB\triangleq \{n\ge0:|\bhat_{1,n}-b_1|\ge\eta\}
\]
is infinite. Limits along $\calB$ mean $n\to\infty$ through these indices. Fix exponents $2/\nu<\delta<\alpha<1$, where $\nu$ is the noise moment exponent in Assumption~\ref{ass:noise}.

\begin{lemma}\label{lem:bad-information}
Almost surely, along $n\in\calB$,
\begin{equation*}
 \liminf_{\substack{n\to\infty\\n\in\calB}}\frac{\mu_nH_n}{\log n}>0,\qquad
 \mu_n=O((\log n)^2),\qquad J_n=O(\sqrt{\mu_nH_n})=o(\mu_n).
\end{equation*}
\end{lemma}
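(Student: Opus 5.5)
The plan is to squeeze the Schur complement $J_n$ between a lower bound built from the noise information of Lemma~\ref{lem:aux} and an upper bound from the normal equation \eqref{eq:T}. Both bounds use the persistent gain error on $\calB$.

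\emph{Step 1: reduce the gain error to a scale error.} Since $b_1=e_{p+1}^T\theta\ne0$, \eqref{eq:coords} gives
\[
 \bhat_{1,n}-b_1=(s_n-1)b_1+e_{p+1}^TQ\ups_n ,\qquad \ups_n=\rho_n+k_n(s_n-1).
\]
I would invoke Guo's pre-stability estimates~\cite{Guo94,Guo95} in the form $V_n=O(\log r_n)$ and $r_n=O(n^{C})$. Together with a transverse excitation bound $\lambda_{\min}(G_n)\gtrsim n^{1-\delta}$, this gives $\norm{\rho_n}^2\le U_n/\lambda_{\min}(G_n)\to0$. The excitation bound should come from $x_n$ containing the filtered noise $z_n$ of Lemma~\ref{lem:geometry}, with $\Omega>0$, plus terms driven by $\varepsilon$ and $y^*$. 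The factor $n^{-\delta}$ absorbs the $\nu$-moment truncation, which is why $\delta>2/\nu$. Using $\norm{k_n}^2\le J_n/\lambda_{\min}(G_t)$ from \eqref{eq:projection-variation}, I then aim to show $\liminf_{n\in\calB}|s_n-1|>0$, possibly after passing to the regime where $J_n$ is small relative to $\lambda_{\min}(G_n)$.

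\emph{Step 2: upper bound on $J_n$.} Divide \eqref{eq:T} by $|s_n-1|\ge c$ to get $J_n\le C(|T_n|+|k_n^TS_n|)$.
\begin{itemize}
\item The martingale $T_n$ has conditional quadratic variation $\sigma^2(\mu_n-\mu_0)$. The LIL for martingales (the appendix estimates) then gives $T_n=O(\sqrt{\mu_n\ellfun(\mu_n)})$.
\item For the cross term, Cauchy--Schwarz gives $|k_n^TS_n|\le\norm{G_n^{1/2}k_n}\,\norm{S_n}_{G_n^{-1}}\le\sqrt{\mu_n}\,\norm{S_n}_{G_n^{-1}}$. This is too crude with $\log r_n$, so I would split $S_n$ along the direction $G_n^{1/2}k_n$. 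The component of $S_n$ in that direction is a scalar martingale whose variation is controlled by $\mu_n$, and the LIL bounds it by $\sqrt{\mu_n H_n}$.
\item Combining, and noting that $\mu_n=O(n^C)$ gives $\ellfun(\mu_n)=O(H_n)$, yields $J_n=O(\sqrt{\mu_nH_n})$.
\end{itemize}

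\emph{Step 3: lower bound on $J_n$ and conclusions.} Sum \eqref{eq:schur-increment} over $j\in[\lceil n^\alpha\rceil,n)$ to get $J_n\ge\sum_j\chi_j^2/(s_j^2(1+\lambda_j))$. Step 1 bounds $s_j$ along the window, and $\lambda_j\to0$. I would next write $\chi_j=y_{j+1}^*-\widehat f_j+k_j^Tx_j$ and show three approximations:
\begin{itemize}
\item $k_j^Tx_j$ can be replaced by $k_n^Tx_j$, using \eqref{eq:projection-variation} with $t=\lceil n^\alpha\rceil$;
\item $\widehat f_j=\rho_j^Tx_j$ is close to $f_j$ in cumulative square, since $x_j-z_j$ is driven by $\varepsilon$ and $y^*$, and $S_j,G_j$ are close to $S_j^w,G_j^w$;
\item the $k_n$ term either adds information or is absorbed.
\end{itemize}
Then \eqref{eq:aux-reference} gives
\[
 J_n\ge c\Bigl(\tfrac12\sum_{j=\lceil n^\alpha\rceil}^{n-1}(y_{j+1}^*)^2+c_\alpha\log n\Bigr)-o(\log n)\ge c'\log n .
\]
Since $\mu_n\ge J_n$, Step 2 gives $c'\log n\le C\sqrt{\mu_nH_n}\le C\mu_n\sqrt{H_n}$. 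This already implies the stated $\liminf$ of $\mu_nH_n/\log n$. The same comparison bounds the reference energy on the window by $O(\sqrt{\mu_nH_n})$. Since $\pi_j=\varepsilon_j+y_{j+1}^*$, this together with Guo's bound on $R_n$ gives $\mu_n\lesssim\sqrt{\mu_nH_n}+\log n+n^\alpha$-boundary terms. Refining the window so the boundary terms are handled by the last assertion of Lemma~\ref{lem:aux}, I expect $\mu_n=O((\log n)^2)$. Finally $J_n=O(\sqrt{\mu_nH_n})=o(\mu_n)$ because $\mu_n/H_n\to\infty$ along $\calB$.

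\emph{Main obstacle.} I expect the hardest step to be the replacement of $\widehat f_j$ by $f_j$ in Step 3 before stability is known. This requires showing that $\varepsilon$- and reference-driven components of $x_j$ contribute only $o(\log n)$ to the relevant sums, using only Guo's pre-stability bounds. I would first try to bound the difference by $U_j$, $R_j$ and $\lambda_{\min}(G_j)^{-1}$ over the window $[n^\alpha,n)$.
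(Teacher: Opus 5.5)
Your overall architecture matches the paper's: an upper bound on $J_n$ from \eqref{eq:T} once $|s_n-1|$ is bounded below, and a lower bound from Lemma~\ref{lem:aux} through \eqref{eq:schur-increment}. The lower-bound step, however, fails as written. Summing \eqref{eq:schur-increment} gives $J_n\ge\sum_j\chi_j^2/(s_j^2(1+\lambda_j))$, and this needs an \emph{upper} bound on $s_j$ uniformly over $[\lceil n^\alpha\rceil,n)$. Step~1 gives only a \emph{lower} bound on $|s_n-1|$ at the endpoint $n\in\calB$, and window indices need not lie in $\calB$. The only a priori bound is $s_j^2=O(\log n)$ from \eqref{eq:Guo-V}, which reduces your estimate to $J_n\gtrsim1$. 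The normal equation \eqref{eq:T} at time $t$ gives only $s_t^2=O(1+\mu_nH_n/J_t^2)$, so $s_t$ may be large exactly while $J_t$ is still small. Nothing therefore supports your intermediate claim $J_n\gtrsim\log n$.

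The paper never lower-bounds $J_n$. In the case $\mu_n\le\log n$ (the other case is trivial), it shows $\sum_{t=m}^{n-1}\chi_t^2\ge(c_\alpha/2)\log n-12\mu_n-o(1)$, using $\sum_t(k_t^Tx_t)^2\le6\mu_n$ and Lemma~\ref{lem:prediction-comparison}. That lemma is routine here because $\mu_n=o(n^\alpha)$, so the case split disposes of what you call the main obstacle. The paper then bounds the same sum from above by inserting $s_t^2=O(1+\mu_nH_n/J_t^2)$ and $\max_t\chi_t^2J_t/(\mu_nH_n)=o(1)$ into the telescoping inequality \eqref{eq:telescoping}. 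This gives $\sum_t\chi_t^2=O(\mu_n+\mu_nH_n/J_m)=O(\mu_nH_n)$, which is why the lemma asserts only $\mu_nH_n\gtrsim\log n$.

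Your route to $\mu_n=O((\log n)^2)$ also fails. Along $\calB$ one has only $R_n=O(n^\delta)$ from \eqref{eq:Guo-bad}, because \eqref{eq:Guo-weighted} yields $O(\log n)$ only when the $\ph_j^TP_j\ph_j$ are bounded, which is unknown before stability. In addition, the prefix $[0,n^\alpha)$ can carry reference energy of order $n^\alpha$. Bounding $E_n$ and $R_n$ separately therefore gives at best $\mu_n=O(n^\alpha)$. The paper instead writes $\mu_n-J_n=k_n^TG_nk_n=O(n\norm{k_n}^2)$. It bounds $n\norm{k_n}^2$ through \eqref{eq:Gram-unweighted} with $v=k_n$ on $[\lfloor n/2\rfloor,n)$ and the decomposition \eqref{eq:chi-decomposition}. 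It then uses $\sum_t\chi_t^2=O(J_n\log n)$, where the crude bound $s_t^2=O(\log n)$ now suffices, together with $J_n=O(\log n)$ from $V_n=O(\log n)$ and the lower bound on $|s_n-1|$.

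There are two problems in Step~2. First, the projection of $S_n$ on the trajectory-dependent direction $G_n^{1/2}k_n$ is not a martingale, so no law of the iterated logarithm applies to it. What is needed is $U_n=O(H_n)$, which follows from $S_t-S_t^w=O((1+\mu_t)^{1/2+\epsilon})$ and $G_t\asymp tI$ in Lemma~\ref{lem:endpoint-basic}; after that, $|k_n^TS_n|\le\sqrt{\mu_nU_n}$ suffices. Second, Lemma~\ref{lem:bounded-LIL} requires bounded $\pi_j$, which is unavailable before stability. So $T_n$ must be handled by \eqref{eq:Guo-T}, whose factor $\log(e+\mu_n)$ is $O(H_n)$ only once $\mu_n=O((\log n)^2)$ is known. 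The $J_n$ bound must therefore come last.

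Finally, Step~1 misreads \eqref{eq:projection-variation}, which bounds $\norm{k_n-k_t}$, not $\norm{k_n}$. The available bound $\norm{k_n}^2\le\mu_n/\lambda_{\min}(G_n)$ need not vanish when $E_n$ is of order $n$. The paper obtains $\ups_n=o(1)$ and then $E_n=O(n^\delta)$ from Lemma~\ref{lem:uniform-regression} combined with $R_n=O(n^\delta)$.
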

\begin{proof}
First consider $n\in\calB$ with $\mu_n\le\log n$. Put $m\triangleq \lceil n^\alpha\rceil$. Lemma~\ref{lem:endpoint-basic} gives $G_t\asymp tI$ uniformly for $m\le t\le n$. Since $\mu_n\le\log n=o(m)$, Lemma~\ref{lem:prediction-comparison} gives
\[
 \sum_{t=m}^{n-1}|\widehat f_t-f_t|^2=o(1).
\]
Lemma~\ref{lem:endpoint-basic} gives $\lambda_t=o(1)$ uniformly on $[m,n)$, so \eqref{eq:schur-increment} yields
\[
 \sum_{t=m}^{n-1}(k_t^Tx_t)^2
 \le2\sum_{t=m}^{n-1}\pi_t^2+4\sum_{t=m}^{n-1}\Delta J_t\le6\mu_n.
\]
In $\chi_t=(y_{t+1}^*-f_t)+(f_t-\widehat f_t+k_t^Tx_t)$, use
$(x+y)^2\ge x^2/2-y^2$ and Lemma~\ref{lem:aux}. It follows that
\begin{equation}\label{eq:chi-lower}
 \sum_{t=m}^{n-1}\chi_t^2\ge(c_\alpha/2)\log n-12\mu_n-o(1).
\end{equation}
To bound $\sum_{t=m}^{n-1}\chi_t^2$ from above, use \eqref{eq:T}, the scalar estimate \eqref{eq:Guo-T}, and the bound $U_t=O(H_n)$ from Lemma~\ref{lem:endpoint-basic} to obtain
\[
 s_t^2=O(1+\mu_nH_n/J_t^2),\qquad m\le t<n.
\]
Also $\norm{k_t-\rho_t}^2=O((\mu_n+H_n)/t)$. The definition of $\chi_t$, boundedness of the reference, $J_t\le\mu_n$, and \eqref{eq:D-max} therefore give
\[
 \max_{m\le t<n}\frac{\chi_t^2J_t}{\mu_nH_n}
 =O\!\left(\frac{(\mu_n+H_n)n^\delta}{H_nm}+\frac1{H_n}\right)=o(1).
\]
Together with \eqref{eq:schur-increment}, these bounds verify the hypotheses of the elementary summation inequality~\eqref{eq:telescoping}, with $\Lambda_t=\mu_n$ and $H=H_n$. Summation and \eqref{eq:chi-lower} give
\begin{align*}
 \sum_{t=m}^{n-1}\chi_t^2&=O(\mu_n+\mu_nH_n/J_m)=O(\mu_nH_n),\\
 (c_\alpha/2)\log n&\le\sum_{t=m}^{n-1}\chi_t^2+12\mu_n+o(1)=O(\mu_nH_n).
\end{align*}
Thus $\mu_nH_n/\log n$ is bounded away from zero along $n\in\calB$ with $\mu_n\le\log n$. If $\mu_n>\log n$, then $\mu_nH_n/\log n>H_n$, so the same conclusion holds.

To obtain the upper bound on $\mu_n$, decompose $\chi_t$ for $\lfloor n/2\rfloor\le t<n$ as
\begin{equation}\label{eq:chi-decomposition}
 \chi_t=(k_n^Tz_t+y_{t+1}^*)+(k_t-k_n-\rho_t)^Tx_t
                          +k_n^T(x_t-z_t).
\end{equation}
Lemma~\ref{lem:reference-Gram}, applied with $v=k_n$, and the bound $G_n/n\to\Omega$ from Lemma~\ref{lem:endpoint-basic} give
\[
 \mu_n-J_n=k_n^TG_nk_n
 =O(n\norm{k_n}^2)
 =O\!\left(\sum_{t=\lfloor n/2\rfloor}^{n-1}(k_n^Tz_t+y_{t+1}^*)^2\right).
\]
Identity~\eqref{eq:projection-variation} and the bounds $U_t=O(H_n)$ and $G_t\asymp nI$ from Lemma~\ref{lem:endpoint-basic} bound the squared sum of the second term in \eqref{eq:chi-decomposition} by $O(J_n+H_n)$. The stable-filter estimate~\eqref{eq:noise-difference} and $\norm{k_n}^2=O(\mu_n/n)$ bound that of the third by $O(\mu_n^2/n)$. Equation~\eqref{eq:chi-decomposition} therefore gives
\[
 \sum_{t=\lfloor n/2\rfloor}^{n-1}(k_n^Tz_t+y_{t+1}^*)^2
 \le3\sum_{t=\lfloor n/2\rfloor}^{n-1}\chi_t^2
 +O\!\left(J_n+H_n+\frac{\mu_n^2}{n}\right).
\]
Combining this with the preceding bound on $\mu_n-J_n$ gives $\mu_n=O(\sum_{t=\lfloor n/2\rfloor}^{n-1}\chi_t^2+J_n+H_n+\mu_n^2/n)$.
The bound $s_t^2=O(\log n)$ in \eqref{eq:D-max} and \eqref{eq:schur-increment} give $\sum_{t=\lfloor n/2\rfloor}^{n-1}\chi_t^2=O(J_n\log n)$. Lemma~\ref{lem:endpoint-basic} gives $\mu_n=o(n)$ and $J_n=O(\log n)$. Absorbing $\mu_n^2/n=o(\mu_n)$ yields $\mu_n=O((\log n)^2)$.

Finally $\log(e+\mu_n)=O(H_n)$, while Lemma~\ref{lem:endpoint-basic} gives $|s_n-1|\ge\eta/(2|b_1|)$. Equation~\eqref{eq:T}, Cauchy--Schwarz and \eqref{eq:Guo-T} therefore imply
\[
 J_n\le\frac{2|b_1|}{\eta}\bigl(|T_n|+\sqrt{\mu_nU_n}\bigr)=O(\sqrt{\mu_nH_n}).
\]
The first assertion gives $H_n/\mu_n=O(H_n^2/\log n)$. Since $H_n^2/\log n=o(1)$, it follows that $J_n=o(\mu_n)$.
\end{proof}

\subsection{Gain consistency}
For $n\ge3$, set
\begin{equation}\label{eq:mn}
 m_n\triangleq \left\lceil\frac{n}{(\log n)^{1/4}}\right\rceil.
\end{equation}
Then $\log(n/m_n)\sim H_n/4$. By Lemma~\ref{lem:bad-information}, $\mu_t\le\mu_n=O((\log n)^2)$. Since $m_n\ge n^\alpha$ eventually, Lemma~\ref{lem:endpoint-basic} gives $U_t=O(H_n)$ uniformly for $m_n\le t\le n$ and $\max_{m_n\le t<n}\lambda_t=o(1)$. Thus \eqref{eq:T} and \eqref{eq:Guo-T} give
$s_t^2=O(1+\mu_tH_n/J_t^2)$ on this interval. Together with \eqref{eq:schur-increment} and the maximal bound in Lemma~\ref{lem:short-window}, this verifies the hypotheses of \eqref{eq:telescoping} with $\Lambda_t=\mu_t$ and $H=H_n$. Since $\mu_t$ is nondecreasing, summation gives
\begin{equation}\label{eq:Wupper}
 W_n\triangleq\sum_{t=m_n}^{n-1}\frac{\chi_t^2}{\mu_t}
 =O\!\left\{\frac{J_n}{\mu_{m_n}}+
 H_n\left(\frac1{J_{m_n}}-\frac1{J_n}\right)\right\}=o(H_n).
\end{equation}
Here $J_n/\mu_{m_n}=o(1)$ by Lemma~\ref{lem:short-window}, and $1/J_{m_n}-1/J_n=o(1)$ because $J_t\ge J_0>0$ is nondecreasing. We derive a contradiction by showing $\liminf_{n\to\infty,\,n\in\calB}W_n/H_n>0$.

For the lower bound, the uniform comparison $\mu_t\asymp(t/n)\mu_n$ in Lemma~\ref{lem:short-window} gives a fixed $c_0>0$, independent of $n$ and $t$, such that
\[
 W_n\ge c_0\frac n{\mu_n}\sum_{t=m_n}^{n-1}\frac{\chi_t^2}{t}.
\]
For the first term in \eqref{eq:chi-decomposition}, we use the weighted bound in Lemma~\ref{lem:reference-Gram} with $v=k_n$. Since $k_n^TG_nk_n=\mu_n-J_n\sim\mu_n$ by Lemma~\ref{lem:bad-information} and $G_n/n\to\Omega$ by Lemma~\ref{lem:endpoint-basic}, this gives a fixed $c_1>0$, independent of $n$, such that
\[
 \frac n{\mu_n}\sum_{t=m_n}^{n-1}\frac{(k_n^Tz_t+y_{t+1}^*)^2}{t}
 \ge c_1\log(n/m_n).
\]
The other two terms have negligible weighted squared sums. Abel summation gives
$\sum_{t=m_n}^{n-1}\norm{x_t}^2/t^2=O(1/m_n)$, and
\eqref{eq:projection-variation} and Lemmas~\ref{lem:endpoint-basic} and~\ref{lem:short-window} give
\[
 \frac n{\mu_n}\sum_{t=m_n}^{n-1}
 \frac{((k_t-k_n-\rho_t)^Tx_t)^2}{t}
 =O\!\left(\frac{n(J_n+H_n)}{\mu_nm_n}\right)=o(1).
\]
Using $\norm{k_n}^2=O(\mu_n/n)$, the filter estimate~\eqref{eq:noise-difference} gives
\[
 \frac n{\mu_n}\sum_{t=m_n}^{n-1}\frac{(k_n^T(x_t-z_t))^2}{t}
 =O(\mu_n/m_n)=o(1),
\]
where the last limit uses $\mu_n=O((\log n)^2)$ from Lemma~\ref{lem:bad-information}. Applying \eqref{eq:chi-decomposition} with these two error bounds gives
\[
 \frac n{\mu_n}\sum_{t=m_n}^{n-1}\frac{\chi_t^2}{t}
 \ge\frac n{2\mu_n}\sum_{t=m_n}^{n-1}
 \frac{(k_n^Tz_t+y_{t+1}^*)^2}{t}-o(1).
\]
Consequently $W_n\ge(c_0c_1/2)\log(n/m_n)-o(1)$. Since $\log(n/m_n)\sim H_n/4$, this gives $\liminf_{n\to\infty,\,n\in\calB}W_n/H_n\ge c_0c_1/8>0$, contradicting \eqref{eq:Wupper}. The set $\calB$ is almost surely finite. Taking $\eta=1/k$, $k\ge1$, and intersecting the corresponding events of probability one gives
\begin{equation}\label{eq:gain-consistency}
 \bhat_{1,n}\longrightarrow b_1\as
\end{equation}

\subsection{Stability, tracking and parameter consistency}
The estimates in this subsection concern the full time sequence. For $d\ge2$, the gain estimate is now eventually bounded away from zero. Proposition~\ref{prop:Guo} gives $r_n=O(n)$ and $R_n=o(n)$. The strong law for squared noise and Cauchy--Schwarz yield
\[
 \frac1n\sum_{j=0}^{n-1}w_{j+1}^2\to\sigma^2,\qquad
 \frac1n\left|\sum_{j=0}^{n-1}\varepsilon_jw_{j+1}\right|
 \le\sqrt{R_n/n}\left(\frac1n\sum_{j=0}^{n-1}w_{j+1}^2\right)^{1/2}=o(1).
\]
This proves stability and optimal tracking.

From
$\bhat_{1,n}=b_1s_n+e_{p+1}^TQ\ups_n$, boundedness of $\bhat_{1,n}$ implies
$\norm{\thh_n-\theta}^2=O(1+\norm{\ups_n}^2)$. Equation~\eqref{eq:Guo-V} and $r_n=O(n)$ give $V_n=O(\log n)$. Lemma~\ref{lem:uniform-regression} and $R_n=o(n)$ therefore yield
\[
 n\norm{\ups_n}^2=O(\log n+1+R_n)+o(n)\norm{\ups_n}^2.
\]
Absorbing the last term gives
\[
 \norm{\ups_n}^2=O\!\left(\frac{\log n+1+R_n}{n}\right)=o(1).
\]
Equation~\eqref{eq:gain-consistency} then gives $s_n\to1$ and $\thh_n\to\theta$. Applying Lemma~\ref{lem:bootstrap} to these conclusions gives $R_n=O(\log n)$ and the information comparisons needed in Section~\ref{sec:rates}.

For $d=1$, necessarily $p=0,q=1$, so $u_n=y_{n+1}^*/\bhat_{1,n}$ and $\varepsilon_n=(b_1-\bhat_{1,n})u_n$. The scalar normal equation is
\[
 \bhat_{1,n}-b_1=
 \frac{P_0^{-1}(\bhat_{1,0}-b_1)+\sum_{j=0}^{n-1}u_jw_{j+1}}
      {P_0^{-1}+\sum_{j=0}^{n-1}u_j^2}.
\]
On $\{\sum_{j=0}^{\infty} u_j^2=\infty\}$, martingale convergence normalized by accumulated energy gives $\bhat_{1,n}\to b_1$. The bounded reference then gives bounded inputs and $\varepsilon_n=o(1)$. On $\{\sum_{j=0}^{\infty} u_j^2<\infty\}$, the martingale numerator converges, the estimate is bounded, and $\sum_{j=0}^{\infty}\varepsilon_j^2<\infty$. In either case the inputs are bounded, $R_n=o(n)$, and the strong law for squared noise proves stability and optimal tracking. Since $P_n\le P_0$, the denominators in \eqref{eq:Guo-weighted} are bounded and that bound improves $R_n$ to $O(\log n)$. The scalar martingale assertions are proved in Appendix~\ref{app:noise}.

For a random reference, the conditioning argument in Appendix~\ref{app:noise} allows the preceding proof to be applied under almost every conditional law given the reference path. The conclusions therefore hold almost surely under the original probability law.

\section{Proof of Parameter Convergence Rates}\label{sec:rates}
Throughout this section $d\ge2$ and the reference is fixed, bounded and deterministic. All asymptotic statements hold almost surely. Conditioning as in Section~\ref{sec:identification} gives the assertions for a random reference.

Lemma~\ref{lem:bootstrap} and \eqref{eq:Guo-V} give
\begin{equation}\label{eq:rate-start}
 G_n\asymp nI,\qquad J_n\asymp\mu_n\asymp E_n+\log n,
 \qquad R_n=O(\log n),\qquad V_n=O(\log n).
\end{equation}

Lemma~\ref{lem:refined-estimates} in Appendix~\ref{app:refined} gives $S_n=O(\sqrt{nH_n})$ and $\norm{\ups_n}^2=O(H_n/n)$ almost surely. In the decomposition used in Lemma~\ref{lem:bounded-predictor}, $d_n=Q^T(\ph_n^o-\ph_n^w)$ is the part due to the reference and $\delta_n=x_n-z_n-d_n$ is the tracking perturbation.

The bound $|k_n^TS_n|^2\le\mu_nU_n$ gives only $(s_n-1)^2=O(H_n/\mu_n)$. We sharpen the estimate of $k_n^TS_n$ by bounding $g_n$.
\begin{lemma}\label{lem:cross-block}
Under the conditions of Theorem~\ref{thm:basic}, almost surely,
\begin{equation*}
 \norm{g_n-\sum_{t=0}^{n-1}y_{t+1}^*d_t}
 =O\!\left(\sqrt{nH_n\log(e+E_n)}+\sqrt{E_n\log n}\right).
\end{equation*}
Consequently, $\norm{g_n}=O(E_n+\sqrt{nH_n\log(e+E_n)})$.
\end{lemma}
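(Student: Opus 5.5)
The plan is to expand $g_n$ through $\pi_t=y_{t+1}^*+\varepsilon_t$ and $x_t=z_t+d_t+\delta_t$ (the decomposition of Lemma~\ref{lem:bounded-predictor}):
\[
 g_n-\sum_{t=0}^{n-1}y_{t+1}^*d_t
 =g_0+\sum_{t=0}^{n-1}y_{t+1}^*z_t+\sum_{t=0}^{n-1}y_{t+1}^*\delta_t+\sum_{t=0}^{n-1}\varepsilon_tx_t .
\]
I will bound each of the three sums separately, using \eqref{eq:rate-start}, Lemma~\ref{lem:refined-estimates} and the filter structure of $z_t$, $d_t$, $\delta_t$.

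\emph{Noise term.} By \eqref{eq:z}, interchanging the order of summation gives
\[
 \sum_t y_{t+1}^*z_t=\sum_s a_{s,n}w_s,\qquad a_{s,n}=\sum_{\ell\ge0}C_\ell y^*_{s+\ell+1}.
\]
The weights are deterministic, and their squared sum is $O(E_n)$ because $\norm{C_\ell}=O(\vartheta^\ell)$. The dependence of $a_{s,n}$ on $n$ comes only from a truncation near $s\approx n$, and it contributes $O(\max_{s\le n}|w_s|)=o(n^{1/\nu'})$ for the moment exponent $\nu>2$. A martingale law of the iterated logarithm, or simply the standard bound $O(\sqrt{E_n\log(e+E_n)})$ for martingales with conditional variance $\sigma^2$, then gives $O(\sqrt{E_n\log n})$. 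If the LIL route is awkward, I will replace it by the martingale estimate already used for \eqref{eq:Guo-T}.

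\emph{Tracking perturbation term.} $\delta_t$ is the output of a stable causal filter (a $B^{-1}$-type filter) applied to $\varepsilon$. Hence $\sum_t\norm{\delta_t}^2=O(R_n)=O(\log n)$, and Cauchy--Schwarz gives $O(\sqrt{E_n\log n})$.

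\emph{Residual term.} This is the main obstacle, since crude Cauchy--Schwarz only gives $\sqrt{nR_n}=O(\sqrt{n\log n})$. From \eqref{eq:scale-ce}, $\varepsilon_t=(1-s_t)\pi_t-\ups_t^Tx_t$, so
\[
 \sum_t\varepsilon_tx_t=\sum_t(1-s_t)\pi_tx_t-\sum_t x_tx_t^T\ups_t .
\]
For the second sum I use $\norm{\ups_t}^2=O(H_t/t)$ from Lemma~\ref{lem:refined-estimates} together with $\sum_{t<n}\norm{x_t}^2=O(n)$. Abel summation then gives $O(\sqrt{nH_n})$.

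For the first sum I use $(s_t-1)^2=O(H_t/\mu_t)$, which follows from \eqref{eq:T}, $|k_t^TS_t|^2\le\mu_tU_t$ and $J_t\asymp\mu_t$. I then split $\pi_t=y_{t+1}^*+\varepsilon_t$.
\begin{itemize}
\item The reference part: Cauchy--Schwarz gives $\bigl(\sum_t(1-s_t)^2(y_{t+1}^*)^2\bigr)^{1/2}O(\sqrt n)$. Since $\mu_t\gtrsim E_t+\log t$, we have $\sum_t H_n(y_{t+1}^*)^2/(e+E_t)=O(H_n\log(e+E_n))$, which is exactly the $\sqrt{nH_n\log(e+E_n)}$ term.
\item The residual part $\sum_t(1-s_t)\varepsilon_tx_t$: substitute the identity for $\varepsilon_t$ once more. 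The piece $(1-s_t)^2\pi_t$ reduces again to the reference case plus a summable remainder, and $(1-s_t)\ups_t^Tx_t$ is $O(\sqrt{H_t/\log t}\sqrt{H_t/t})$, which is harmless.
\end{itemize}
If this iteration leaves a stray $\log\log$ factor, I will fall back on $\sum_t\varepsilon_t^2/\log t=O(H_n)$, obtained by summation by parts from $R_t=O(\log t)$. That bound still fits within $\sqrt{nH_n\log(e+E_n)}$ up to constants when combined with $(1-s_t)^2\le H_t/\log t$.

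\emph{Consequence.} $d_t$ is a stable filter of the bounded reference, so $\sum_t\norm{d_t}^2=O(E_n+1)$. Cauchy--Schwarz gives $\bigl\|\sum_t y_{t+1}^*d_t\bigr\|=O(E_n+1)$. Finally, $\sqrt{E_n\log n}\le E_n+\log n$, and $\log n$ is absorbed into $\sqrt{nH_n}$, which yields the stated bound on $\norm{g_n}$.
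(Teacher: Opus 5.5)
Your proposal is correct and is essentially the paper's proof. The paper solves the CE identity \eqref{eq:scale-ce} exactly as $\pi_t=s_t^{-1}(y_{t+1}^*-\ups_t^Tx_t)$ once $|s_t|\ge1/2$, which is the closed form of your iteration, and it bounds the same three pieces: $\sum_t y_{t+1}^*x_t$ split through $z_t,d_t,\delta_t$; the $(s_t^{-1}-1)y_{t+1}^*x_t$ sum by Cauchy--Schwarz with $\sum_t(s_t-1)^2(y_{t+1}^*)^2=O(H_n\log(e+E_n))$; and $\sum_t s_t^{-1}x_tx_t^T\ups_t$ by Abel summation.

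Your second substitution does close the argument. Since $(1-s_t)^4=O(H_t^2/\log^2t)$ and $R_t=O(\log t)$, you get $\sum_t(1-s_t)^4\varepsilon_t^2<\infty$, so the remainder is $O(\sqrt n)$. Drop the stated fallback, however. Combining $(1-s_t)^2\le H_t/\log t$ with $\sum_t\varepsilon_t^2/\log t=O(H_n)$ only gives $\sum_t(1-s_t)^2\varepsilon_t^2=O(H_n^2)$, hence a bound of $O(H_n\sqrt n)$. That exceeds $\sqrt{nH_n\log(e+E_n)}$ whenever $E_n$ stays bounded.
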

\begin{proof}
Choose a finite integer $n_0\ge3$ after which $|s_t|\ge1/2$. The finite initial segment contributes a fixed random vector. Using \eqref{eq:scale-ce} in $g_n=g_0+\sum_{t=0}^{n-1}\pi_tx_t$ gives
\begin{equation}\label{eq:g-split}
 g_n=O(1)+\sum_{t=n_0}^{n-1}y_{t+1}^*x_t
 +\sum_{t=n_0}^{n-1}(s_t^{-1}-1)y_{t+1}^*x_t
 -\sum_{t=n_0}^{n-1}s_t^{-1}x_tx_t^T\ups_t.
\end{equation}
For the first sum, the filter estimate for a bounded reference~\eqref{eq:weighted-filter} gives
$\sum_{t=0}^{n-1}y_{t+1}^*z_t=O(\sqrt{nH_n})$. Stability of the reference filter gives
$\sum_{t=0}^{n-1}\norm{d_t}^2=O(E_n)$, and \eqref{eq:post-filter} gives
$\sum_{t=0}^{n-1}|y_{t+1}^*|\norm{\delta_t}=O(\sqrt{E_n\log n})$.
Therefore
\[
 \sum_{t=0}^{n-1}y_{t+1}^*(x_t-d_t)
 =O(\sqrt{nH_n}+\sqrt{E_n\log n}).
\]
For the second sum, the bound $(s_t-1)^2=O(H_t/\mu_t)$ from Lemma~\ref{lem:refined-estimates} and $\mu_t\asymp E_t+\log t$ give
\begin{equation*}
 \sum_{t=n_0}^{n-1}(s_t-1)^2(y_{t+1}^*)^2
 =O\!\left(H_n\sum_{t=n_0}^{n-1}\frac{E_{t+1}-E_t}{1+E_t}\right)
 =O(H_n\log(e+E_n)).
\end{equation*}
The last comparison follows by integration, since the nonnegative increments of $E_t$ are bounded. Cauchy--Schwarz, $|s_t|\ge1/2$ and $\sum_{t=0}^{n-1}\norm{x_t}^2=O(n)$ bound the second sum in \eqref{eq:g-split} by $O(\sqrt{nH_n\log(e+E_n)})$.

Finally, Lemma~\ref{lem:refined-estimates} and Abel summation with $\sum_{t=0}^{n-1}\norm{x_t}^2=O(n)$ give
\[
 \sum_{t=n_0}^{n-1}\norm{x_t}^2\norm{\ups_t}
 =O\!\left(\sqrt{H_n}\sum_{t=n_0}^{n-1}\frac{\norm{x_t}^2}{\sqrt t}\right)
 =O(\sqrt{nH_n}).
\]
Substitution in \eqref{eq:g-split} proves the first bound. Since $\norm{\sum_{t=0}^{n-1}y_{t+1}^*d_t}=O(E_n)$ and $\sqrt{E_n\log n}=O(E_n+\sqrt{nH_n})$, the asserted bound for $g_n$ follows.
\end{proof}

\begin{proof}[Proof of Theorem~\ref{thm:rates}]
Lemma~\ref{lem:refined-estimates} gives the bound for $\norm{\ups_n}^2$. Since $k_n=-G_n^{-1}g_n$, Lemmas~\ref{lem:refined-estimates} and~\ref{lem:cross-block} give
\[
 |k_n^TS_n|^2
 =O\!\left(\frac{H_nE_n^2}{n}+H_n^2\log(e+E_n)\right).
\]
Substitute this estimate and \eqref{eq:T-LIL-bound} into \eqref{eq:T}, using $J_n\asymp\mu_n$ and $E_n=O(\mu_n)$, to obtain
\begin{equation*}
 (s_n-1)^2=O\!\left\{
 \frac{\ellfun(\mu_n)}{\mu_n}+\frac{H_n}{n}
 +\frac{H_n^2\log(e+\mu_n)}{\mu_n^2}\right\}.
\end{equation*}
Equation~\eqref{eq:rate-start} and boundedness of the reference give $\log n=O(\mu_n)$ and $\mu_n=O(n)$. Since $x/\ellfun(x)$ is eventually increasing, $H_n/n=O(\ellfun(\mu_n)/\mu_n)$. Eventual decrease of $\log(e+x)/(x\ellfun(x))$ gives
\[
 \frac{H_n^2\log(e+\mu_n)}{\mu_n\ellfun(\mu_n)}
 =O\!\left(\frac{H_n^2\log\log n}{(\log n)\ellfun(\log n)}\right)=o(1).
\]
Thus
$(s_n-1)^2=O(\ellfun(\mu_n)/\mu_n)$, equivalent to \eqref{eq:rate-main}.
Orthogonality gives $\norm{\thh_n-\theta}^2=\norm\theta^2(s_n-1)^2+\norm{\ups_n}^2$. Using $H_n/n=O(\ellfun(\mu_n)/\mu_n)$ gives \eqref{eq:full-rate}. Finally $E_n+\log n\ge\log n$ and eventual decrease of $\ellfun(x)/x$ give \eqref{eq:uniform-rate}, completing the proof of Theorem~\ref{thm:basic}.
\end{proof}

\begin{proof}[Proof of Corollary~\ref{cor:ratios}]
The identity
\[
 \frac{\thh_n}{\bhat_{1,n}}-\frac\theta{b_1}
 =\frac{(I-\theta e_{p+1}^T/b_1)Q\ups_n}{\bhat_{1,n}},
\]
together with $\bhat_{1,n}\to b_1\ne0$ and \eqref{eq:rate-main}, proves the bound. The coefficient errors also share a component proportional to the gain error. For every $j\in\{1,\ldots,d\}$,
\[
 (\thh_n-\theta)_j-\frac{\theta_j}{b_1}(\bhat_{1,n}-b_1)
 =(e_j-\theta_je_{p+1}/b_1)^TQ\ups_n
 =O(\sqrt{\log\log n/n})\as
\]
For a nonzero true coefficient, the gain error can therefore cause slow convergence of its estimate. If $\theta_j=0$, the common component vanishes, and $(\thh_n)_j^2=O(\log\log n/n)$ almost surely.
\end{proof}

\section{Proof of the Residual and Distributional Laws}\label{sec:cost}
We retain $d\ge2$ and first fix a bounded deterministic reference.

\subsection{Cumulative tracking residuals and sharpness}\label{sec:residual-proof}
\begin{proof}[Proof of Theorem~\ref{thm:cost}]
All asymptotic statements in this proof hold almost surely. Lemmas~\ref{lem:bounded-predictor} and~\ref{lem:refined-estimates} give bounded $\varepsilon_n$ and $V_n=O(H_n)$, respectively. Apply Lemma~\ref{lem:bounded-LIL} to $\varepsilon_n$. Since $R_n=O(\log n)$,
\[
 \sum_{t=0}^{n-1}\varepsilon_tw_{t+1}=O\!\left(\sqrt{(1+R_n)\ellfun(1+R_n)}\right)
 =O\!\left(\sqrt{\log n\,\ellfun(\log n)}\right).
\]
Equation~\eqref{eq:energy-limit} proves \eqref{eq:R-ell}. Since
$\det M_n=J_n\det G_n$, the comparisons in \eqref{eq:rate-start} give
\begin{equation*}
 \ell_n=(d-1)\log n+\log\mu_n+O(1)
       =(d-1)\log n+\log(E_n+\log n)+O(1).
\end{equation*}
This proves \eqref{eq:R-explicit} and $R_n/(\sigma^2\ell_n)\to1$.

\end{proof}

\begin{proof}[Proof of Corollary~\ref{cor:sharp}]
Here $\pi_n=\varepsilon_n$ and $\mu_n=\mu_0+R_n$. Theorem~\ref{thm:cost} gives \eqref{eq:zero-info}. With $E_n=0$, Lemmas~\ref{lem:refined-estimates} and~\ref{lem:cross-block} give $\norm{g_n}+\norm{S_n}=O(\sqrt{nH_n})$ almost surely. Since $G_n\asymp nI$ and $k_n=-G_n^{-1}g_n$,
\[
 \mu_n-J_n=g_n^TG_n^{-1}g_n=O(H_n),\qquad
 k_n^TS_n=O(H_n)\as
\]
Consequently,
\begin{equation}\label{eq:zero-information-proof}
 J_n\sim\mu_n\sim R_n\sim(d-1)\sigma^2\log n\as
\end{equation}

Apply the law of the iterated logarithm for bounded predictable coefficients~\eqref{eq:bounded-LIL} to $T_n-T_0$. It yields
\[
 \limsup_{n\to\infty}\frac{T_n}{\sigma\sqrt{2\mu_n\log\log\mu_n}}=1,
 \qquad
 \liminf_{n\to\infty}\frac{T_n}{\sigma\sqrt{2\mu_n\log\log\mu_n}}=-1.
\]
Since $k_n^TS_n=O(H_n)=o(\sqrt{\log n\,\log\log\log n})$, substitution of \eqref{eq:zero-information-proof} into \eqref{eq:T} proves \eqref{eq:scale-sharp}. Orthogonality in \eqref{eq:coords} and
$\norm{\ups_n}^2=O(H_n/n)=o(\log\log\log n/\log n)$ prove \eqref{eq:vector-sharp}.

For each $j\in\{1,\ldots,d\}$, \eqref{eq:coords} also gives
\[
 (\thh_n-\theta)_j=\theta_j(s_n-1)+(Q\ups_n)_j,
 \qquad
 (Q\ups_n)_j=o\!\left(\sqrt{\frac{\log\log\log n}{\log n}}\right)\as
\]
Combining this with \eqref{eq:scale-sharp} yields the limit superior in Remark~\ref{rem:zero-limits} for each nonzero $\theta_j$.
\end{proof}

\subsection{Asymptotic confidence ellipsoids}\label{sec:limit-proofs}
The following estimate gives the deterministic scale normalization used in Lemma~\ref{lem:score-limit}.
\begin{lemma}\label{lem:information-asymptotics}
Under the conditions of Theorem~\ref{thm:cost}, almost surely,
\begin{equation}\label{eq:mu-equivalent}
 \mu_n\sim E_n+(d-1)\sigma^2\log n.
\end{equation}
\end{lemma}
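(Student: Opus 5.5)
We need $\mu_n=\mu_0+\sum_{j<n}\pi_j^2$ with $\pi_j=y_{j+1}^*+\varepsilon_j$. The plan is to expand the square:
\[
 \mu_n=\mu_0+E_n+2\sum_{j=0}^{n-1}y_{j+1}^*\varepsilon_j+R_n .
\]
By Cauchy--Schwarz the cross term is at most $2\sqrt{E_nR_n}$. Theorem~\ref{thm:cost} and \eqref{eq:rate-start} give $R_n=O(\log n)$. The difficulty is that $\sqrt{E_nR_n}$ is not negligible relative to $E_n+\log n$ when $E_n\asymp\log n$. So I need more than $R_n$; I need the precise law $R_n=\sigma^2\ell_n+o(\log n)$ together with an expansion of $\ell_n$ in terms of $\mu_n$.

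First I will use $\det M_n=J_n\det G_n$ together with $G_n/n\to\Omega>0$ (Lemma~\ref{lem:endpoint-basic} and Lemma~\ref{lem:bootstrap}). This gives
\[
 \ell_n=(d-1)\log n+\log J_n+O(1).
\]
With $J_n\asymp\mu_n=O(n)$, \eqref{eq:R-ell} then yields
\[
 R_n=(d-1)\sigma^2\log n+\sigma^2\log\mu_n+O\!\left(\sqrt{\log n\,\log\log\log n}\right).
\]
Since $\mu_n\ge c\log n$, the term $\sigma^2\log\mu_n$ is $O(\log\log n)+\sigma^2\log(\mu_n/\log n)$.

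Next I split into regimes. If $E_n\ge K\log n$ for a large $K$, then $R_n=O(\log n)$ and Cauchy--Schwarz show that the cross term plus $R_n$ is at most $C(\sqrt{K}+1)\log n\le\epsilon(E_n+\log n)$ for large $K$. Hence $\mu_n\sim E_n$, which is consistent with the claim. The hard regime is $E_n\le K\log n$. There $\log\mu_n=O(\log\log n)=o(\log n)$, so $R_n\sim(d-1)\sigma^2\log n$, and only the cross term $\sum_j y_{j+1}^*\varepsilon_j$ remains to be shown to be $o(\log n)$.

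To handle the cross term I will use the decomposition of $\varepsilon_j$. By \eqref{eq:scale-ce}, $\varepsilon_j=\pi_j-y_{j+1}^*=(1-s_j)\pi_j-\ups_j^Tx_j$. For the first piece, Theorem~\ref{thm:rates} gives $(s_j-1)^2=O(\ellfun(\mu_j)/\mu_j)$. Mimicking the integration in Lemma~\ref{lem:cross-block}, I will bound
\[
 \sum_j |y_{j+1}^*|\,|s_j-1|\,|\pi_j|\le\Bigl(\sum_j(s_j-1)^2\pi_j^2\Bigr)^{1/2}E_n^{1/2}.
\]
Here $\sum_j(s_j-1)^2\pi_j^2=O\bigl(\sum_j\ellfun(\mu_j)\Delta\mu_j/\mu_j\bigr)=O(H_n\log\mu_n)=O(H_n\log\log n)$, so the product is $o(\log n)$. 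For the second piece, $\sum_j y_{j+1}^*\ups_j^Tx_j$, I will use the decomposition $x_j=z_j+d_j+\delta_j$ from Lemma~\ref{lem:bounded-predictor}. Together with $\norm{\ups_j}^2=O(H_j/j)$, Cauchy--Schwarz handles the $d_j$ and $\delta_j$ parts: the bounds $\sum\norm{d_j}^2=O(E_n)$ and \eqref{eq:post-filter} combined with $\sum_j\norm{\ups_j}^2(y^*_{j+1})^2\le\sum_jH_j/j=O(H_n\log n)$ are too crude, so I will instead use $\rho_j$ and the LS structure, with $\ups_j=\rho_j+k_j(s_j-1)$ and $\rho_j=G_j^{-1}S_j$. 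The $z_j$ part, $\sum_j y_{j+1}^*S_j^TG_j^{-1}z_j$, closely resembles $\sum_j y_{j+1}^*f_j$, and Lemma~\ref{lem:aux} states exactly that this is $o(\log n)$ along subsequences with $E_n=O(\log n)$. I would therefore invoke Lemma~\ref{lem:prediction-comparison} to compare $\widehat f_j=\rho_j^Tx_j$ with $f_j$. The remaining difference sums to $O(1)$ or $o(\log n)$ because $\mu_n=O(\log n)$ in this regime, and the $k_j(s_j-1)$ correction is controlled by $\norm{k_j}^2=O(\mu_j/j)$ and the scale rate.

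The main obstacle is this last step: showing $\sum_j y_{j+1}^*\varepsilon_j=o(\log n)$ when $E_n\asymp\log n$. The approach depends on transferring the last assertion of Lemma~\ref{lem:aux} from $f_j$ to the actual prediction, via Lemma~\ref{lem:prediction-comparison} applied on $[n^\alpha,n)$. The initial segment $[0,n^\alpha)$ contributes at most $\sqrt{E_{n^\alpha}R_{n^\alpha}}$ plus similar terms. I would control these through $R_{n^\alpha}\le\alpha(d-1)\sigma^2\log n(1+o(1))$, letting $\alpha\downarrow0$ at the end.
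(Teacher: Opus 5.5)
Your architecture is the paper's. You use the identity $\mu_n=\mu_0+E_n+R_n+2\sum_t y_{t+1}^*\varepsilon_t$ and split at $E_n\gtrless K\log n$, with Cauchy--Schwarz in the high-energy regime. For $E_n\le K\log n$ you decompose $\varepsilon_t$ through $\widehat f_t=\rho_t^Tx_t$, Lemma~\ref{lem:prediction-comparison} and the last assertion of Lemma~\ref{lem:aux}. Your prefix cutoff $n^\alpha$ with $\alpha\downarrow0$ works. The paper instead takes $m=\lceil(\log n)^2\rceil$, so that $R_m+\sum_{t<m}f_t^2=O(\log\log n)$ and the prefix is $o(\log n)$ without a further limit.

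\textbf{The gap.} One step fails as stated: the correction $\sum_j y_{j+1}^*(s_j-1)k_j^Tx_j$. From $\norm{k_j}^2=O(\mu_j/j)$ and $(s_j-1)^2=O(H_j/\mu_j)$ you only get $|y_{j+1}^*(s_j-1)k_j^Tx_j|=O(|y_{j+1}^*|\,\norm{x_j}\sqrt{H_j/j})$. Cauchy--Schwarz with $\sum_j\norm{x_j}^2/j=O(\log n)$ then gives $O(\sqrt{E_nH_n\log n})$. When $E_n\asymp\log n$ (e.g.\ $y_j^*=j^{-1/2}$) this is $O(\sqrt{H_n}\log n)$, not $o(\log n)$. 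Replacing the rate by $\sup_{j\ge m}|s_j-1|\to0$ is worse. The pointwise bound on $k_j$ loses a logarithm: it yields $\sum_t(k_t^Tx_t)^2=O(\mu_n\log n)$, whereas the true size is $O(\mu_n)$.

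\textbf{The fix.} Keep $(s_j-1)\pi_j$ and $(s_j-1)k_j^Tx_j$ together, as the paper does. Write $\varepsilon_t=-\widehat f_t-(s_t-1)(\pi_t+k_t^Tx_t)$. By \eqref{eq:schur-increment}, $(\pi_t+k_t^Tx_t)^2=(1+\lambda_t)\Delta J_t\le2\Delta J_t$ once $\lambda_t\le1$, so $\sum_{t\ge m}(\pi_t+k_t^Tx_t)^2\le2J_n=O_K(\log n)$. Cauchy--Schwarz then bounds the term by $\sup_{t\ge m}|s_t-1|\sqrt{2E_nJ_n}=o(\log n)$, and no scale rate is needed. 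Equivalently, keep your split but use $\sum_t(k_t^Tx_t)^2\le2\sum_t\pi_t^2+2\sum_t(\pi_t+k_t^Tx_t)^2=O(\mu_n)$.

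\textbf{Smaller points.}
\begin{enumerate}
\item In the regime $E_n\ge K\log n$ the cross term is not $O(\sqrt K\log n)$. The correct bound is $2\sqrt{E_nR_n}\le2\sqrt{C/K}\,E_n$, which still gives your conclusion.
\item $G_n/n\to\Omega$ is not available after consistency, and it is false in general for references with positive energy. Lemma~\ref{lem:endpoint-basic} concerns indices in $\calB$, which is eventually empty. You only need $G_n\asymp nI$ from Lemma~\ref{lem:bootstrap}, and the resulting expansion of $R_n$ is just \eqref{eq:R-explicit}.
\end{enumerate}
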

\begin{proof}
All asymptotic statements in this proof hold almost surely. The identity
\begin{equation}\label{eq:mu-expand}
 \mu_n=\mu_0+E_n+R_n+2\sum_{t=0}^{n-1}y_{t+1}^*\varepsilon_t
\end{equation}
reduces this comparison to the cross term between reference and residual. We show
\begin{equation}\label{eq:ref-residual}
 \sum_{t=0}^{n-1}y_{t+1}^*\varepsilon_t=o(E_n+\log n).
\end{equation}
Fix $K\ge1$. For indices with $E_n>K\log n$, Cauchy--Schwarz gives
\[
 \frac{\left|\sum_{t=0}^{n-1}y_{t+1}^*\varepsilon_t\right|}{E_n+\log n}
 \le\frac1{\sqrt K}\sqrt{\frac{R_n}{\log n}}.
\]
The factor $R_n/\log n$ is bounded independently of $K$. Now consider indices with $E_n\le K\log n$, for which $\mu_n=O_K(\log n)$.

Put $m\triangleq\lceil(\log n)^2\rceil$. Since $R_m+\sum_{t=0}^{m-1}f_t^2=O(\log m)$ by \eqref{eq:rate-start} and \eqref{eq:aux-law}, Cauchy--Schwarz gives
\begin{equation}\label{eq:prefix-cross}
 \begin{aligned}
 &\left|\sum_{t=0}^{m-1}y_{t+1}^*\varepsilon_t\right|
 +\left|\sum_{t=0}^{m-1}y_{t+1}^*f_t\right|\\
 &\qquad=O_K\!\left(\sqrt{\log n\,\log\log n}\right)=o(\log n).
 \end{aligned}
\end{equation}
Here $\mu_n=O_K(\log n)=o(m)$, and \eqref{eq:rate-start} gives $G_t\asymp tI$ uniformly for $m\le t\le n$. Lemma~\ref{lem:prediction-comparison} yields
\begin{equation}\label{eq:post-pred-comparison}
 \sum_{t=m}^{n-1}|\widehat f_t-f_t|^2
 =O\!\left(\frac{\mu_nH_n+(1+\mu_n)^{3/2}}{m}\right)
 =O_K\!\left(\frac{H_n}{\log n}+\frac1{\sqrt{\log n}}\right)=o(1).
\end{equation}
Equation~\eqref{eq:schur-increment} and $\widehat f_t=\rho_t^Tx_t$ give
\begin{equation*}
 \varepsilon_t=-f_t-(s_t-1)(\pi_t+k_t^Tx_t)-(\widehat f_t-f_t).
\end{equation*}
The last assertion of Lemma~\ref{lem:aux} and \eqref{eq:prefix-cross} give $\sum_{t=m}^{n-1}y_{t+1}^*f_t=o(\log n)$.

By \eqref{eq:schur-increment} and $\lambda_t\le1$ eventually, $\sum_{t=m}^{n-1}(\pi_t+k_t^Tx_t)^2\le2(J_n-J_m)\le2J_n$. Cauchy--Schwarz therefore gives
\[
 \left|\sum_{t=m}^{n-1}y_{t+1}^*(s_t-1)(\pi_t+k_t^Tx_t)\right|
 \le\sup_{t\ge m}|s_t-1|\sqrt{2E_nJ_n}=o(\log n),
\]
since $s_t\to1$ and $E_n,J_n=O_K(\log n)$. Equation~\eqref{eq:post-pred-comparison} gives $\left|\sum_{t=m}^{n-1}y_{t+1}^*(\widehat f_t-f_t)\right|=o(\log n)$. Together with \eqref{eq:prefix-cross}, this proves \eqref{eq:ref-residual} on $E_n\le K\log n$ for each fixed $K$. Taking $K$ through the positive integers and then letting $K\to\infty$ in the complementary bound proves \eqref{eq:ref-residual} on the full sequence, on the same event of probability one.

Equation~\eqref{eq:R-explicit} now implies
\[
 R_n=(d-1)\sigma^2\log n+o(E_n+\log n),
\]
because $\log(E_n+\log n)=o(E_n+\log n)$ and the error in \eqref{eq:R-explicit} is $o(\log n)$. Substitution in \eqref{eq:mu-expand} proves \eqref{eq:mu-equivalent}.
\end{proof}

\begin{proof}[Proof of Theorem~\ref{thm:distribution}]
The normal equations \eqref{eq:S}--\eqref{eq:T} give
\[
 \mathcal S_n=M_n\binom{s_n-1}{\ups_n},\qquad
 V_n=\mathcal S_n^TM_n^{-1}\mathcal S_n,
\]
where $\mathcal S_n=(T_n,S_n^T)^T$. Let $\bar M_n$ be the deterministic comparison matrix defined before Lemma~\ref{lem:score-limit} in Appendix~\ref{app:refined}. That lemma shows that the matrix $\bar M_n^{-1/2}M_n\bar M_n^{-1/2}$ converges almost surely to $I_d$, and the normalized score has a standard normal limit. Hence
\[
 \frac{V_n}{\sigma^2}
 =\norm{\sigma^{-1}\bar M_n^{-1/2}\mathcal S_n}^2+o_{\Prob}(1)
 \xrightarrow{d}\chi_d^2.
\]
Optimal tracking gives $\widehat\sigma_n^2\to\sigma^2>0$ almost surely. Slutsky's theorem yields \eqref{eq:chisquare}.
\end{proof}

For a random reference, condition on its complete path as in Appendix~\ref{app:noise}. The almost sure assertions hold under almost every conditional law and hence under the original law. Under almost every conditional law, the statistic in \eqref{eq:chisquare} converges to the same $\chi_d^2$ distribution. Bounded convergence applied to conditional expectations of bounded continuous test functions gives Theorem~\ref{thm:distribution} under the joint law.

\section{Conclusion}\label{sec:conclusion}
Under the stated assumptions, the ordinary LS-based regulator achieves almost sure stability and optimal tracking for every bounded reference, with strong consistency when $d\ge2$. The common scale error can decay more slowly than the errors in the parameter ratios that determine the feedback coefficients, and its rate is sharp under zero reference. The residual logarithm law makes the dependence on reference energy explicit. Normalization by the observed information matrix yields asymptotic confidence ellipsoids. The example in Appendix~\ref{app:boundary} shows that these almost sure guarantees can coexist with infinite expected input energy over a finite horizon. A direction for future work is to control such transient costs while preserving the asymptotic guarantees.
\appendix
\section{LS Identities and Filter Comparisons}\label{app:algebra}
In Appendices~\ref{app:algebra}--\ref{app:refined}, scale and transverse coordinates are used only when $d\ge2$. The scalar LS estimates and stable filter representations also apply when $d=1$. Probabilistic estimates use Assumption~\ref{ass:noise}, and estimates for the actual CE recursion assume~\eqref{eq:well-defined}.
\subsection{LS identities and estimates}
Iterating the RLS normal equation gives
\[
 M_n\binom{s_n-1}{\ups_n}
 =L_\theta^TP_0^{-1}(\thh_0-\theta)
 +\sum_{j=0}^{n-1}\binom{\pi_j}{x_j}w_{j+1}.
\]
The lower block gives \eqref{eq:S}. Substitute $\ups_n=\rho_n+k_n(s_n-1)$ into the upper block to obtain \eqref{eq:T}. Sherman--Morrison applied to $G_{n+1}=G_n+x_nx_n^T$, together with $g_{n+1}=g_n+\pi_nx_n$, gives the first identity in \eqref{eq:schur-increment}. Substituting the same expression for $\ups_n$ in \eqref{eq:scale-ce} proves the remaining two identities.

\begin{lemma}\label{lem:general-energy}
For a linear regression $y_{n+1}=\theta^T\ph_n+w_{n+1}$ with predictable regressors and the RLS update \eqref{eq:rls}--\eqref{eq:info-update}, put $\varepsilon_n=(\theta-\thh_n)^T\ph_n$, $h_n\triangleq\ph_n^TP_n\ph_n$, and define $V_n$ and $\ell_n$ as in \eqref{eq:V} and~\eqref{eq:ell}. The algebraic identity
\begin{equation}\label{eq:energy-step}
 V_{n+1}-V_n=-\frac{\varepsilon_n^2+2\varepsilon_nw_{n+1}}{1+h_n}
                  +\frac{h_nw_{n+1}^2}{1+h_n}
\end{equation}
holds without the CE equation. Under \eqref{eq:noise}, if, for some $1<{\nu_0}<\min(\nu/2,2)$,
\begin{equation}\label{eq:general-energy-conditions}
 \sum_{n=0}^{\infty} h_n^{\nu_0}<\infty,\qquad\sum_{n=0}^{\infty} h_n^2V_n<\infty\quad\text{a.s.},
\end{equation}
then
\[
 \sum_{j=0}^{n-1}\varepsilon_j^2+2\sum_{j=0}^{n-1}\varepsilon_jw_{j+1}
        +V_n-\sigma^2\ell_n
\]
converges almost surely to a finite random variable.
\end{lemma}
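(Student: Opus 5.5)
The plan has two parts: an algebraic one-step identity for $V_n$, then a telescoping argument in which every term except the cumulative residual and cross term is shown to converge.

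\textbf{The identity \eqref{eq:energy-step}.} Write $\tilde\theta_n\triangleq\theta-\thh_n$, so $\varepsilon_n=\tilde\theta_n^T\ph_n$ and $V_n=\tilde\theta_n^TP_n^{-1}\tilde\theta_n$. The RLS update \eqref{eq:rls} gives
\[
\tilde\theta_{n+1}=\tilde\theta_n-\frac{P_n\ph_n(\varepsilon_n+w_{n+1})}{1+h_n}.
\]
By Sherman--Morrison, $P_{n+1}\ph_n=P_n\ph_n/(1+h_n)$. Together these yield $P_{n+1}^{-1}\tilde\theta_{n+1}=P_n^{-1}\tilde\theta_n+\ph_n\varepsilon_n-\ph_n(\varepsilon_n+w_{n+1})=P_n^{-1}\tilde\theta_n-\ph_nw_{n+1}$. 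Expanding $V_{n+1}=\tilde\theta_{n+1}^TP_{n+1}^{-1}\tilde\theta_{n+1}$ with these two relations is then a routine computation, and collecting terms gives \eqref{eq:energy-step}. No control law enters at any point.

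\textbf{Rewriting the sum.} Multiply \eqref{eq:energy-step} through and use $\varepsilon_n^2/(1+h_n)=\varepsilon_n^2-h_n\varepsilon_n^2/(1+h_n)$, and likewise for the cross term. Summing gives
\[
\sum_{j<n}(\varepsilon_j^2+2\varepsilon_jw_{j+1})+V_n-V_0=\sum_{j<n}\frac{h_j(\varepsilon_j^2+2\varepsilon_jw_{j+1})}{1+h_j}+\sum_{j<n}\frac{h_jw_{j+1}^2}{1+h_j}.
\]
The determinant identity $\det P_{j+1}^{-1}=(1+h_j)\det P_j^{-1}$ gives $\ell_n=\sum_{j<n}\log(1+h_j)$. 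Write $h_j/(1+h_j)=\log(1+h_j)+O(h_j^2)$; the $O(h_j^2)$ error is summable because $h_j\le1$ eventually and $\sum h_j^{\nu_0}<\infty$ with $\nu_0<2$. The noise term then splits as
\[
\sigma^2\ell_n+\sum_j\frac{h_j}{1+h_j}(w_{j+1}^2-\sigma^2)+\text{convergent}.
\]
For the middle sum, the martingale $\sum_j\frac{h_j}{1+h_j}(w_{j+1}^2-\sigma^2)$ converges. Use the Chow-type local convergence theorem with exponent $\nu_0\le\nu/2$: the conditional $\nu_0$-moments of $w_{j+1}^2$ are bounded, and $\sum h_j^{\nu_0}<\infty$.

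\textbf{The $\varepsilon$-terms.} The remaining terms weighted by $h_j\varepsilon_j^2$ and $h_j\varepsilon_jw_{j+1}$ must also converge. Since $\varepsilon_j^2\le h_jV_j$ by Cauchy--Schwarz in the $P_j$ metric, we get $h_j\varepsilon_j^2\le h_j^2V_j$, which is summable by \eqref{eq:general-energy-conditions}. The cross term $\sum\frac{h_j}{1+h_j}\varepsilon_jw_{j+1}$ is a martingale whose conditional variance is at most $\sigma^2\sum h_j^2\varepsilon_j^2\le\sigma^2\sum h_j^3V_j$, which is finite since $h_j\le1$. Hence it converges a.s.

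Subtracting $\sigma^2\ell_n$ and the fixed $V_0$ then gives the claimed a.s. convergence. The main obstacle is the squared-noise martingale, since only $\nu_0$-moments with $\nu_0<2$ are available. The plan there is to apply the martingale convergence theorem for $\sum a_j\xi_{j+1}$ with $\sum|a_j|^{\nu_0}<\infty$ and $\sup\E[|\xi_{j+1}|^{\nu_0}\mid\F_j]<\infty$, $1<\nu_0\le2$.
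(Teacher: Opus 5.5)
Your proposal is correct and follows essentially the same route as the paper: the one-step identity, the determinant relation $\ell_{n+1}-\ell_n=\log(1+h_n)$, and the same four-term split of the increment. The four terms are controlled exactly as in the paper, namely by $\varepsilon_n^2\le h_nV_n$, by the conditional-variance bound $\sum h_n^3V_n$ for the cross term, by a localized $\nu_0$th-moment (Chow-type) series criterion for $\frac{h_n}{1+h_n}(w_{n+1}^2-\sigma^2)$, and by the $O(h_n^2)$ bound on $\frac{h_n}{1+h_n}-\log(1+h_n)$. Your derivation of the identity via $P_{n+1}^{-1}\tilde\theta_{n+1}=P_n^{-1}\tilde\theta_n-\ph_nw_{n+1}$ is simply a tidier way of carrying out the paper's direct expansion of the quadratic form.
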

\begin{proof}
Expand $\thh_{n+1}-\theta$ in the quadratic form $P_{n+1}^{-1}=P_n^{-1}+\ph_n\ph_n^T$ and use $\ph_n^T(\thh_n-\theta)=-\varepsilon_n$ to obtain \eqref{eq:energy-step}. The determinant lemma gives $\ell_{n+1}-\ell_n=\log(1+h_n)$. The increment of the process in the lemma is
\[
 \frac{h_n\varepsilon_n^2}{1+h_n}
 +\frac{2h_n\varepsilon_nw_{n+1}}{1+h_n}
 +\frac{h_n}{1+h_n}(w_{n+1}^2-\sigma^2)
 +\sigma^2\left\{\frac{h_n}{1+h_n}-\log(1+h_n)\right\}.
\]
Cauchy--Schwarz gives $\varepsilon_n^2\le h_nV_n$. The first series is therefore summable. The second is a martingale with conditional variance sum bounded by $4\sigma^2\sum_{n=0}^{\infty} h_n^3V_n<\infty$, since $h_n=o(1)$. The third is a martingale with summable conditional ${\nu_0}$th moments, by $2{\nu_0}<\nu$ and \eqref{eq:general-energy-conditions}. Apply the localized series criterion in Appendix~\ref{app:noise}. The last series is absolutely convergent because its terms are $O(h_n^2)$ and ${\nu_0}<2$. This proves the claim.
\end{proof}

\begin{proposition}\label{prop:Guo}
Write $V_n=(\thh_n-\theta)^TP_n^{-1}(\thh_n-\theta)$ for every $d\ge1$. For the actual recursion, Assumption~\ref{ass:noise}, \eqref{eq:well-defined} and the minimum-phase condition imply, almost surely,
\begin{equation}\label{eq:Guo-V}
 V_n=O(\log(e+r_n)),\qquad\norm{\thh_n}^2=O(\log(e+r_n)),
\end{equation}
\begin{equation}\label{eq:Guo-weighted}
 \sum_{j=0}^{n-1}\frac{\varepsilon_j^2}{1+\ph_j^TP_j\ph_j}=O(\log(e+r_n)),
\end{equation}
and, when $d\ge2$,
\begin{equation}\label{eq:Guo-T}
 T_n^2=O(\mu_n\log(e+\mu_n)).
\end{equation}
If $|\bhat_{1,n}-b_1|\ge\eta>0$ infinitely often, then for every fixed $\delta\in(2/\nu,1)$, along those indices,
\begin{equation}\label{eq:Guo-bad}
 r_n=O(n),\qquad R_n=O(n^\delta),\qquad V_n=O(\log n).
\end{equation}
An eventual positive lower bound on $|\bhat_{1,n}|$ implies $r_n=O(n)$ and $R_n=o(n)$.
\end{proposition}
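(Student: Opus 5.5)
The plan is to derive every assertion from the one-step energy identity \eqref{eq:energy-step} of Lemma~\ref{lem:general-energy}, combined with standard martingale estimates for predictable coefficients. These are the Lai--Wei type bound $\sum_{j<n}a_jw_{j+1}=O\bigl(s_n^{1/2}\log^{1/2+\kappa}(e+s_n)\bigr)$ with $s_n=\sum_{j<n}a_j^2$, and its vector analogue for $\sum_j P_j\ph_jw_{j+1}$. The closed-loop structure then enters only through the minimum-phase condition, which controls the inputs by the outputs.

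\emph{Step 1: the bounds \eqref{eq:Guo-V}--\eqref{eq:Guo-T}.} I will sum \eqref{eq:energy-step} from $0$ to $n-1$. This gives
\[
 V_n+\sum_{j<n}\frac{\varepsilon_j^2}{1+h_j}
 =V_0-2\sum_{j<n}\frac{\varepsilon_jw_{j+1}}{1+h_j}+\sum_{j<n}\frac{h_jw_{j+1}^2}{1+h_j}.
\]
\begin{enumerate}
\item The martingale term is $O\bigl((\sum_j\varepsilon_j^2/(1+h_j))^{1/2+\kappa}\bigr)+O(1)$, since $(1+h_j)^{-2}\le(1+h_j)^{-1}$. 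It can therefore be absorbed into the left-hand side.
\item For the last term, I will write $w_{j+1}^2=\sigma^2+(w_{j+1}^2-\sigma^2)$. I will use $\sum_j h_j/(1+h_j)\le\ell_n\le d\log(e+r_n)+O(1)$, and bound the centred part by $O(\log(e+r_n))$ via the $\nu$-moment martingale estimate with weights $h_j/(1+h_j)\le1$.
\end{enumerate}
Together these yield \eqref{eq:Guo-V} and \eqref{eq:Guo-weighted}. The bound on $\norm{\thh_n}^2$ then follows from $\norm{\thh_n-\theta}^2\le\lambda_{\max}(P_0)V_n$. For \eqref{eq:Guo-T}, the sum $T_n-T_0=\sum_{j<n}\pi_jw_{j+1}$ has $\F_j$-measurable coefficients with quadratic variation $\mu_n-\mu_0$. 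The scalar martingale estimate with $\kappa=1/2$ gives $T_n^2=O(\mu_n\log(e+\mu_n))$.

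\emph{Step 2: growth of the regressors.} Since $B^{-1}$ is stable, the plant equation $B(z)u_n=A(z)y_{n+1}-w_{n+1}$ gives
\[
 \norm{\ph_n}^2\le C\sum_{i\le n+1}\vartheta^{\,n+1-i}(y_i^2+w_i^2)+O(\vartheta^n).
\]
Using $y_{j+1}=\varepsilon_j+y^*_{j+1}+w_{j+1}$, this implies
\[
 r_n=O\Bigl(n+R_n+\sum_{j\le n}w_j^2\Bigr)=O(n+R_n),
\]
and also $\norm{\ph_n}^2=O(R_{n+1}+n^{\delta})$, because $\max_{j\le n}w_j^2=o(n^{2/\nu})$ when $\delta>2/\nu$. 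To transfer \eqref{eq:Guo-weighted} to $R_n$, I will bound
\[
 1+h_j\le1+\lambda_{\max}(P_0)\norm{\ph_j}^2.
\]
This gives the recursive inequality
\[
 R_n\le C\log(e+r_n)\,\max_{j<n}(1+\norm{\ph_j}^2)
 \le C\log(e+n+R_n)\bigl(R_n'+n^\delta\bigr),
\]
where $R_n'$ is the contribution of $\varepsilon_{n-1}$. The obstruction is the single latest residual. In Guo's treatment this is handled through the CE equation \eqref{eq:ce}: $\varepsilon_n=(\theta-\thh_n)^T\ph_n$, and $u_n$ is determined from $y^*_{n+1}$ and the past divided by $\bhat_{1,n}$.

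\emph{Step 3 (main obstacle): closing the recursion.} I expect this to be the hardest step. If $|\bhat_{1,n}|$ is eventually bounded below, $u_n$ is bounded by $C(1+\norm{\thh_n})$ times past regressors, and $\norm{\thh_n}^2=O(\log(e+r_n))$ by \eqref{eq:Guo-V}. Following Guo's Theorems 6.2 and 7.2, I would then show $\norm{\ph_n}^2=O(n^{\delta})\cdot O(\log)$ by a stopping-time induction. This gives $R_n=o(n)$ and hence $r_n=O(n)$.

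Along the bad indices where $|\bhat_{1,n}-b_1|\ge\eta$, I would use \eqref{eq:Guo-V} as the lower estimate
\[
 \eta^2\lambda_{\min}(P_n^{-1})\le V_n=O(\log(e+r_n)).
\]
I would combine it with the CE relation $\thh_n^T\ph_n=y^*_{n+1}$ and first try to show that the latest regressor satisfies $\norm{\ph_n}^2=O(n^\delta)$, by arguing that a large $\varepsilon_n$ forces $\ph_n$ into directions already carrying information. Feeding this into Step~2 gives $R_n=O(n^\delta\log n)$. Replacing $\delta$ by a slightly smaller admissible exponent then gives $R_n=O(n^\delta)$, $r_n=O(n)$, and $V_n=O(\log n)$ along those indices, which is \eqref{eq:Guo-bad}. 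If this direct argument fails, I would instead import the corresponding estimates of \cite[Theorems~6.2, 7.2]{Guo95}, which are stated before stability and use only \eqref{eq:well-defined} and the minimum-phase condition.
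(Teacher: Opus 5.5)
\textbf{There is a genuine gap: Step~3 does not prove \eqref{eq:Guo-bad} or the final assertion, and the fallback citation cannot fill it.}

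Your Step~1 is a correct self-contained version of what the paper imports from Guo's Corollaries~3.1--3.2. Your transfer $R_n\le(1+\norm{P_0}\max_{j<n}\norm{\ph_j}^2)\sum_{j<n}\varepsilon_j^2/(1+\ph_j^TP_j\ph_j)$ is exactly the paper's last step for \eqref{eq:Guo-bad}. The problem is Step~3, which has to carry all of \eqref{eq:Guo-bad} and the final assertion.

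The obstruction is not only the latest residual. The quantity $\max_{j<n}\norm{\ph_j}^2$ involves every earlier residual, including those at earlier times when $\bhat_{1,j}$ may have been near zero. No induction in $n$ is available, because the gain-error hypothesis holds only at the sparse indices under consideration.

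Your direct idea does not handle this. It is also phrased for $\ph_n$ and $\varepsilon_n$, whereas $r_n$ and $R_n$ involve only $\ph_0,\ldots,\ph_{n-1}$, and the hypothesis concerns $\thh_n$, which already contains $\ph_{n-1}$. What has to be exploited is that $((\thh_n-\theta)^T\ph_j)^2\le V_n$ for every $j<n$. So an earlier regressor whose input component dominates the rest by more than a factor of order $\sqrt{\log(e+r_n)}$ would force $\bhat_{1,n}$ close to $b_1$.

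Turning this into $r_n=O(n)$ is the content of the pathwise Lemma~7.1 of \cite{Guo95}, eqs.~(67)--(69), which holds before stability. The paper proceeds as follows:
\begin{enumerate}
\item It obtains $w_j^2=O((1+j)^\gamma)$ for some $2/\nu<\gamma<\delta$ by conditional Markov and Borel--Cantelli.
\item It uses $(1+j)^\gamma$ as the comparison sequence in Guo's eq.~(37).
\item It applies the lemma with endpoint $\tau=n-1$; the gain error at $\tau+1=n$ is Guo's condition~(67).
\item This yields $r_n=O(n)$ and $\max_{j<n}\norm{\ph_j}^2=O(n^{\gamma+\epsilon})$, after which your transfer finishes. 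Paths with bounded $r_n$ are handled separately.
\end{enumerate}

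Your fallback citation does not supply this. Guo's Theorem~7.2 assumes the growth condition on reference information (see Remark~\ref{rem:d1}), which is precisely what is unavailable here. For the final assertion, the relevant results are Guo's Theorem~6.1 and Corollary~6.1, with H.1--H.2 coming from the LS estimates and H.3 from the gain lower bound. Your unspecified stopping-time induction would have to reproduce that argument.

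\textbf{Separately, your derivation of \eqref{eq:Guo-T} is off by a logarithm.} A bound $O(s_n^{1/2}\log^{1/2+\kappa}(e+s_n))$ with $\kappa=1/2$ squares to $T_n^2=O(\mu_n\log^2(e+\mu_n))$. No $\kappa>0$ gives the single logarithm asserted, and the single logarithm is used in Section~\ref{sec:gain-information} through $\log(e+\mu_n)=O(H_n)$.

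The fix is the paper's: run your Step~1 on the scalar regression $w_{j+1}=0\cdot\pi_j+w_{j+1}$, with initial information $\mu_0$ and initial estimate $T_0/\mu_0$. Its LS estimate is $T_n/\mu_n$ and its quantity $V$ equals $T_n^2/\mu_n$. Then \eqref{eq:Guo-V} for that regression gives $T_n^2=O(\mu_n\log(e+\mu_n))$.
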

\begin{proof}
Equations~\eqref{eq:Guo-V}--\eqref{eq:Guo-weighted} are the LS estimates in~\cite[Corollaries 3.1--3.2]{Guo95}, with observations $0,\ldots,n-1$ included in $P_n^{-1}$. Apply its Corollary 3.1 to the scalar regression with regressor $\pi_j$, response $w_{j+1}$, initial information $\mu_0$, initial estimate $T_0/\mu_0$, and true parameter zero to obtain \eqref{eq:Guo-T}.

For \eqref{eq:Guo-bad}, first consider paths with $r_n\to\infty$. For $2/\nu<\gamma<\delta$, conditional Markov and Borel--Cantelli give $w_j^2=O((1+j)^\gamma)$. Apply~\cite[Lemma 7.1, eqs. (67)--(69)]{Guo95} with endpoint $\tau=n-1$ and comparison sequence $(1+j)^\gamma$ in its eq. (37). The gain error at $\tau+1=n$ is at least $\eta$, so condition (67) holds eventually along these indices. The lemma gives $r_n=O(n)$ and
$\max_{0\le j<n}\norm{\ph_j}^2=O(n^{\gamma+\epsilon})$ for each $\epsilon>0$. Since $P_j\preceq P_0$, \eqref{eq:Guo-weighted} gives
\[
 R_n\le(1+\norm{P_0}\max_{0\le j<n}\norm{\ph_j}^2)
       \sum_{j=0}^{n-1}\frac{\varepsilon_j^2}{1+\ph_j^TP_j\ph_j}
 =O(n^{\gamma+\epsilon}\log n)=O(n^\delta)
\]
when $0<\epsilon<\delta-\gamma$. If $r_n$ is bounded, so are the regressors and the denominators in \eqref{eq:Guo-weighted}. Thus $R_n$ and $V_n$ are bounded, which also gives \eqref{eq:Guo-bad}. The final assertion follows from~\cite[Theorem 6.1 and Corollary 6.1]{Guo95}: Corollaries 3.1--3.2 there imply H.1--H.2 for ordinary LS, and the gain bound implies H.3.

Finite initial histories add an exponentially decaying term to the stable inverse estimate in~\cite[eq. (A10)]{Guo95}. The comparison sequence $(1+j)^\gamma\ge1$ absorbs this term. The finite initial energy is retained in~\cite[eq. (A15)]{Guo95}. For random references, use the conditioning argument of Appendix~\ref{app:noise}.
\end{proof}

\subsection{Stable filter representations}
Define the ideal tracking input, with zero initial histories, by
\begin{equation*}
 B(z)u_n^o=A(z)y_{n+1}^*+[A(z)-1]w_{n+1}.
\end{equation*}
Form $\ph_n^o$ from output $y_n^*+w_n$ and input $u_n^o$, in the same lag order as $\ph_n$. Extend a scalar sequence $v_n$ by zero for $n<0$ and define
\[
 (\mathcal Tv)_n\triangleq(v_{n-1},\ldots,v_{n-p},[(A/B)v]_n,\ldots,[(A/B)v]_{n-q+1})^T.
\]
The minimum-phase condition makes this filter stable, and $\theta^T(\mathcal Tv)_n=v_n$. For $n\ge0$, define the deterministic transient vector $\mathbf h_n$ to have $i$th output component $y_{n-i+1}$ for $n<i$ and zero otherwise, $1\le i\le p$. Its input components are $\widetilde u_n,\ldots,\widetilde u_{n-q+1}$, where $\widetilde u_j=u_j$ for $1-q\le j<0$ and
\[
 B(z)\widetilde u_n=\sum_{i=n+1}^p a_i y_{n+1-i},\qquad n\ge0.
\]
It decays exponentially once $n\ge p$. Direct subtraction of the plant equation and the ideal noise equation gives
\begin{equation*}
 \ph_n=\ph_n^w+(\mathcal T\pi)_n+\mathbf h_n,
 \quad\sum_{n=0}^{\infty}\norm{\mathbf h_n}^2<\infty,\quad\theta^T\mathbf h_n=0.
\end{equation*}
Similarly,
\begin{equation}\label{eq:tracking-representation}
 \ph_n-\ph_n^o=(\mathcal T\varepsilon)_n+\mathbf h_n.
\end{equation}

\begin{lemma}\label{lem:filter-differences}
There is a deterministic constant $K_r>0$ such that, for all $n\ge1$,
\begin{align}
 \sum_{j=0}^{n-1}\norm{x_j-z_j}^2&\le K_r\mu_n,\label{eq:noise-difference}\\
 \sum_{j=0}^{n-1}\norm{\ph_j-\ph_j^o}^2&\le K_r(1+R_n).\label{eq:tracking-difference}
\end{align}
Moreover, for fixed $2/\nu<\gamma<1$,
\begin{equation}\label{eq:x-max-mu}
 \max_{0\le j<n}\norm{x_j}^2=O(n^\gamma+\mu_n)\as
\end{equation}
\end{lemma}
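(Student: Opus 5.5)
The plan is to write each regressor difference as a stable linear filter applied to a scalar sequence plus an exponentially decaying transient, and then use the standard $\ell^2$ bound for stable filters. A stable causal filter $F$ with impulse response $\sum_\ell\norm{F_\ell}<\infty$ and zero initial history satisfies $\sum_{j<n}\norm{[Fv]_j}^2\le(\sum_\ell\norm{F_\ell})^2\sum_{j<n}v_j^2$, by Young's inequality for truncated convolutions.

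For \eqref{eq:noise-difference}, start from the representation $\ph_n=\ph_n^w+(\mathcal T\pi)_n+\mathbf h_n$. Multiplying by $Q^T$ gives
\[
 x_n-z_n=Q^T(\mathcal T\pi)_n+Q^T\mathbf h_n .
\]
The operator $\mathcal T$ is built from pure delays and the filter $A/B$, which is stable by Assumption~\ref{ass:plant}. Its impulse response is therefore deterministic and exponentially decaying, and the truncation at time $n$ only involves $\pi_0,\dots,\pi_{n-1}$ because of the lag structure. The $\ell^2$ bound then gives $\sum_{j<n}\norm{Q^T(\mathcal T\pi)_j}^2\le C\sum_{j<n}\pi_j^2\le C\mu_n$, using $\mu_n=\mu_0+\sum_{j<n}\pi_j^2$. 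The transient contributes at most $\sum_n\norm{\mathbf h_n}^2<\infty$, a deterministic constant, since the initial data are deterministic. This constant is absorbed into $K_r\mu_n$ because $\mu_n\ge\mu_0>0$. Bound \eqref{eq:tracking-difference} follows in the same way from \eqref{eq:tracking-representation}, with $\varepsilon$ in place of $\pi$ and $1+R_n$ in place of $\mu_n$. Take $K_r$ to be the larger of the two constants.

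For \eqref{eq:x-max-mu}, write $x_j=z_j+(x_j-z_j)$. From \eqref{eq:noise-difference}, $\max_{j<n}\norm{x_j-z_j}^2\le K_r\mu_n$, since a maximum of squares is at most their sum. It remains to show $\max_{j<n}\norm{z_j}^2=O(n^\gamma)$ almost surely. By Lemma~\ref{lem:geometry}, $\norm{z_j}\le C\sum_\ell\vartheta^\ell|w_{j-\ell}|$, so $\norm{z_j}^2\le C'\sum_\ell\vartheta^\ell w_{j-\ell}^2$ by Cauchy--Schwarz. Hence it suffices to show $w_j^2=O(j^\gamma)$ almost surely.

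This last step is the only probabilistic one and the main point needing care. I would argue as in the proof of Proposition~\ref{prop:Guo}. By the conditional Markov inequality with the $\nu$th moment bound, $\Prob(|w_{j+1}|^2>(1+j)^\gamma\mid\F_j)\le C(1+j)^{-\gamma\nu/2}$. This is summable because $\gamma\nu/2>1$. The conditional Borel--Cantelli lemma (Lévy's extension) then gives $w_j^2\le(1+j)^\gamma$ eventually. The moment constant is only almost surely finite, so I would localize on the events where that supremum is at most $N$, and let $N\to\infty$. With this bound, $\sum_\ell\vartheta^\ell w_{j-\ell}^2=O(j^\gamma)$, which completes the estimate.
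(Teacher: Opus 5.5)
Your proposal is correct and follows essentially the same route as the paper. Causal truncation, the bounded $\ell^2$ operator norm of the stable filter $\mathcal T$, the finite energy of the deterministic transient $\mathbf h_n$ and $\mu_n\ge\mu_0>0$ give \eqref{eq:noise-difference}--\eqref{eq:tracking-difference}. The pointwise bound then comes from \eqref{eq:noise-difference} together with $\max_{0\le j<n}\norm{z_j}^2=O(n^\gamma)$; the paper quotes this from Lemma~\ref{lem:noise-estimates}, and you rederive it by the same conditional Markov and Borel--Cantelli argument used there. Your localization step is unnecessary, because L\'evy's conditional Borel--Cantelli lemma applies pathwise once $\sup_{n}\E[|w_{n+1}|^\nu\mid\F_n]$ is almost surely finite.
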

\begin{proof}
Truncate $\pi$ or $\varepsilon$ after time $n-1$. Causality leaves all terms in the displayed finite sums unchanged. The bounded $\ell^2$ operator norm of $\mathcal T$, the finite energy of the deterministic transient $\mathbf h_j$, and $\mu_n\ge\mu_0>0$ give the bounds in \eqref{eq:noise-difference}--\eqref{eq:tracking-difference} with a common deterministic constant $K_r$. The pointwise bound follows from \eqref{eq:noise-difference} and $\max_{0\le j<n}\norm{z_j}^2=O(n^\gamma)$ in Lemma~\ref{lem:noise-estimates}.
\end{proof}

\subsection{Summation estimates}
For $1\le m<n$, if $\tr G_t=O(t)$ uniformly for $m\le t\le n$, Abel summation with weights $t^{-2}$ gives
$\sum_{t=m}^{n-1}\norm{x_t}^2/t^2=O(1/m)$.

The consistency proof also uses an elementary telescoping inequality. Let $K,H>0$ and $\Lambda_t>0$. If $J_t>0$ is nondecreasing, $\Delta J_t=J_{t+1}-J_t$, and
\[
 \chi_t^2\le K(1+\Lambda_tH/J_t^2)\Delta J_t,
 \qquad\chi_t^2J_t\le\Lambda_tH,
\]
then
\begin{equation}\label{eq:telescoping}
 \frac{\chi_t^2}{\Lambda_t}
 \le K\frac{\Delta J_t}{\Lambda_t}
 +(K+1)H\left(\frac1{J_t}-\frac1{J_{t+1}}\right).
\end{equation}
Indeed, multiply the first hypothesis by $J_t^2$ and the second by $\Delta J_t$, add, and divide by $\Lambda_tJ_tJ_{t+1}$. 

\section{Noise and Reference Estimates}\label{app:noise}
In Appendices~\ref{app:noise}--\ref{app:refined}, statements involving $z_n$ or the actual regressors use Assumption~\ref{ass:plant}. These appendices treat a fixed bounded deterministic reference, with random references handled by conditioning as below.

\subsection{Localization and conditioning}
First assume
$\E[|w_{n+1}|^\nu\mid\F_n]\le M$ for a deterministic $M$.
For the general case, define
\[
 \tau_M\triangleq \inf\{n\ge0:\E[|w_{n+1}|^\nu\mid\F_n]>M\},
 \qquad \inf\varnothing=\infty.
\]
On an enlarged space, let $(\iota_j)_{j\ge1}$ be independent random variables taking the values $\pm1$ with equal probabilities, independent of the original process. Define
\[
 \mathcal G_n\triangleq \F_n\vee\sigma(\iota_1,\ldots,\iota_n),\qquad
 w_{n+1}^{(M)}\triangleq
 \begin{cases}
 w_{n+1},&n<\tau_M,\\
 \sigma\iota_{n+1},&n\ge\tau_M.
 \end{cases}
\]
Since $\{n<\tau_M\}\in\F_n$ and the added variables are independent of the original process,
\[
 \E[w_{n+1}^{(M)}\mid\mathcal G_n]=0,\quad
 \E[(w_{n+1}^{(M)})^2\mid\mathcal G_n]=\sigma^2,\quad
 \E[|w_{n+1}^{(M)}|^\nu\mid\mathcal G_n]\le\max\{M,\sigma^\nu\}.
\]
On $\{\tau_M=\infty\}$ the modified noise agrees with the original sequence. Since $\bigcup_{M\in\mathbb N}\{\tau_M=\infty\}$ has probability one by \eqref{eq:noise}, the almost sure conclusions for the modified sequences transfer to the original sequence. For estimates of filters driven by the noise, form the filters from the modified noise. When applying the martingale estimate \eqref{eq:power-martingale}, we retain the predictable coefficients of the original process and replace only the noise. These coefficients remain predictable with respect to $(\mathcal G_n)$, and the resulting sums coincide with the original sums on $\{\tau_M=\infty\}$.

For a random reference satisfying Assumption~\ref{ass:noise} and condition \eqref{eq:well-defined}, take a regular conditional distribution of the noise given the complete reference path on the canonical sequence spaces. Because the complete reference sequence is $\F_0$-measurable, the conditional mean and variance identities in \eqref{eq:noise} hold under this distribution for almost every reference path. Indeed, test the original identities against products of bounded functions of the reference path and a countable generating class for each finite noise history, and use the monotone class theorem. The same argument, with nonnegative truncations, transfers the conditional moment bound \eqref{eq:noise}. By conditioning the events of probability one on which the reference is bounded, the conditional moment bounds are finite, and \eqref{eq:well-defined} holds, all these requirements hold simultaneously under almost every conditional law.

\subsection{Moment and martingale bounds}
We use the deterministic conditional moment bound $M$ obtained by localization. For $r\ge1$, write $\|X\|_{L^r}\triangleq (\E\|X\|^r)^{1/r}$, with absolute value for scalar $X$. The symbol $\ind_D$ denotes the indicator of $D$.

The deterministic constants $C_{\nu_0}$ and $C_\nu$ below depend only on their indicated subscripts and may differ between inequalities. For a scalar martingale difference sequence $(D_j)$ with finite $\nu_0$th moments, $1<\nu_0\le2$,
\begin{equation}\label{eq:mart-max}
 \E\max_{1\le k\le n}\left|\sum_{j=1}^kD_j\right|^{\nu_0}
 \le C_{\nu_0}\sum_{j=1}^n\E|D_j|^{\nu_0}.
\end{equation}
This follows from Burkholder's inequality~\cite[Theorem 2.10, p.~23]{HallHeyde80}, Doob's maximal inequality and
$(\sum_{j=1}^nD_j^2)^{\nu_0/2}\le\sum_{j=1}^n|D_j|^{\nu_0}$.

For deterministic coefficients $c_1,\ldots,c_n$, Burkholder's inequality and weighted Jensen's inequality give
\begin{equation}\label{eq:det-moment}
 \E\left|\sum_{j=1}^nc_jw_j\right|^\nu
 \le C_\nu\E\left(\sum_{j=1}^nc_j^2w_j^2\right)^{\nu/2}
 \le C_\nu M\left(\sum_{j=1}^nc_j^2\right)^{\nu/2}.
\end{equation}
These bounds apply componentwise in fixed dimension.

The maximal inequality \eqref{eq:mart-max}, after stopping before the cumulative conditional $\nu_0$th moment exceeds a fixed level, gives almost sure convergence of a martingale series whenever that sum is finite.

Choose $1<{\nu_0}<\min(\nu/2,2)$. Apply \eqref{eq:mart-max} to
$(w_{n+1}^2-\sigma^2)/(n+1)$ and then Kronecker's lemma to obtain
\begin{equation*}
 n^{-1}\sum_{j=0}^{n-1}w_{j+1}^2\longrightarrow\sigma^2\as
\end{equation*}
For any predictable scalar or vector coefficients in a fixed dimension $\psi_j$, put
$B_n\triangleq 1+\sum_{j=0}^{n-1}\norm{\psi_j}^2$. Since
\[
 \sum_{j=0}^{\infty}\E\left[\frac{\norm{\psi_jw_{j+1}}^2}{B_{j+1}^{1+2\epsilon}}\,
 \middle|\,\F_j\right]
 =\sigma^2\sum_{j=0}^{\infty}\frac{B_{j+1}-B_j}{B_{j+1}^{1+2\epsilon}}<\infty,
\]
martingale convergence and Kronecker's lemma give, for every $\epsilon>0$,
\begin{equation}\label{eq:power-martingale}
 \sum_{j=0}^{n-1}\psi_jw_{j+1}=O(B_n^{1/2+\epsilon})\as
\end{equation}
The same proof with denominator $B_{j+1}$ gives $B_n^{-1}\sum_{j=0}^{n-1}\psi_jw_{j+1}=o(1)$ on $\{B_n\to\infty\}$. If $B_n$ stays bounded, the unnormalized martingale converges, by summation by parts of the convergent normalized series. The denominators are predictable because $\psi_j$ is $\F_j$-measurable.

\begin{lemma}\label{lem:noise-estimates}
For $z_n,G_n^w,S_n^w$ in \eqref{eq:z} and~\eqref{eq:aux}, there is a deterministic constant $\zeta>0$ such that, for each fixed $\gamma\in(2/\nu,1)$, almost surely,
\begin{equation}\label{eq:noise-estimates}
 G_n^w=n\Omega+O(n^{1-\zeta}),\quad
 S_n^w=O(\sqrt{nH_n}),\quad
 \norm{z_n}^2=O(n^\gamma).
\end{equation}
\end{lemma}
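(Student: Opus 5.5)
The three bounds concern only the stationary noise filter $z_n=\sum_{\ell\ge0}C_\ell w_{n-\ell}$ from Lemma~\ref{lem:geometry}. None of them involves the closed loop. By the localization in the first subsection of Appendix~\ref{app:noise}, I may assume the deterministic bound $\E[|w_{n+1}|^\nu\mid\F_n]\le M$ throughout.

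\emph{Pointwise bound.} Write $z_n=\sum_{\ell=0}^{n}C_\ell w_{n-\ell}$, which is a finite sum since $w_j=0$ for $j\le0$. The moment bound \eqref{eq:det-moment} with deterministic coefficients, together with $\sum_\ell\norm{C_\ell}^2<\infty$, gives $\E\norm{z_n}^\nu\le C$ uniformly in $n$. Conditional Markov then gives $\Prob\{\norm{z_n}^2>n^\gamma\}\le Cn^{-\gamma\nu/2}$, which is summable because $\gamma\nu/2>1$. Borel--Cantelli yields $\norm{z_n}^2=O(n^\gamma)$.

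\emph{Martingale bound.} Here $S_n^w-S_0=\sum_{j<n}z_jw_{j+1}$, and $z_j$ is $\F_j$-measurable. Its conditional quadratic variation is $\sigma^2\sum_{j<n}z_jz_j^T$, which is $O(n)$ by the Gram estimate below. I would apply, componentwise, the law of the iterated logarithm for martingales with predictable coefficients and bounded $\nu$th conditional moments, $\nu>2$. This could be Stout's LIL, or the bound $\sum\psi_jw_{j+1}=O(\sqrt{B_n\log\log B_n})$ already invoked as \eqref{eq:bounded-LIL}. A growth condition on $\max_j\norm{z_j}^2$ may be needed; the pointwise bound $O(n^\gamma)$ with $\gamma<1$ should be enough to verify the Lindeberg-type truncation condition. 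Then $S_n^w=O(\sqrt{n\log\log n})=O(\sqrt{nH_n})$.

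\emph{Gram estimate.} Expand
\[
\sum_{j<n}z_jz_j^T=\sum_{\ell,\ell'}C_\ell\Bigl(\sum_{j<n}w_{j-\ell}w_{j-\ell'}\Bigr)C_{\ell'}^T .
\]
Split this into a diagonal part $\ell=\ell'$ and an off-diagonal part $\ell\ne\ell'$.

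For the diagonal part, $\sum_{j<n}w_{j-\ell}^2=\sigma^2 n+O(\ell)+\sum_j(w_j^2-\sigma^2)$. I need a polynomial rate for $\sum_{j\le n}(w_j^2-\sigma^2)$. Apply \eqref{eq:mart-max} with $\nu_0\in(1,\min(\nu/2,2))$ to the weighted differences $(w_j^2-\sigma^2)/j^{1/\nu_0+\epsilon'}$. Kronecker's lemma then gives $O(n^{1/\nu_0+\epsilon'})=O(n^{1-\zeta})$.

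For the off-diagonal part, take $\ell<\ell'$. Then $\sum_j w_{j-\ell}w_{j-\ell'}$ is a martingale in $j$ with predictable coefficient $w_{j-\ell'}$. By \eqref{eq:power-martingale} it is $O(n^{1/2+\epsilon})$.

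The main obstacle is making the off-diagonal bound uniform over infinitely many lag pairs. I would truncate at lag $L_n=\lceil c\log n\rceil$. The tail contributes $O(n\vartheta^{L_n})=O(n^{1-\zeta})$, using the pointwise bounds and Cauchy--Schwarz with $\sum_j w_j^2=O(n)$. For the finitely many, though growing, pairs with $\ell,\ell'\le L_n$, I would use a common maximal moment bound: \eqref{eq:mart-max} applied over dyadic blocks $n\in[2^k,2^{k+1})$ gives a tail probability of order $2^{-k\zeta'}$ per pair. Multiplying by the $O(k^2)$ pairs still leaves a summable series, so Borel--Cantelli applies. Summing the pairs with the weights $\norm{C_\ell}\norm{C_{\ell'}}$ then gives $G_n^w-G_0=n\Omega+O(n^{1-\zeta})$ for a small deterministic $\zeta>0$, and the fixed $G_0$ is absorbed into the error term.
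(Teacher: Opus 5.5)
Your proposal is correct. Its skeleton matches the paper's: a pointwise bound, a lag-pair expansion of the Gram matrix controlled by \eqref{eq:mart-max} with Borel--Cantelli at dyadic times, and an LIL-type bound for the score. The real difference is how you get uniformity over the infinitely many lag pairs. The paper does not truncate lags. It shows that, for every pair $(i,j)$,
\[
 \Bigl\|\max_{1\le k\le n}\Bigl|\sum_{t=0}^{k-1}\bigl(w_{t-i}w_{t-j}-\sigma^2\ind_{\{i=j\}}\bigr)\Bigr|\Bigr\|_{L^{\nu_0}}\le C(1+i+j)n^{1/\nu_0},
\]
with $C$ independent of $i,j,n$; the factor $1+i+j$ absorbs the zero-history terms on the diagonal. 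It then sums these norms over all pairs by Minkowski, using $\sum_{i,j}(1+i+j)\norm{C_i}\norm{C_j}<\infty$. This gives a single $L^{\nu_0}$ bound for the maximal Gram error, so one Markov/Borel--Cantelli step finishes.

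Your route uses three pieces, and each is valid. Kronecker handles the diagonal, which works because all diagonal terms are partial sums of one sequence. Lag truncation at $c\log n$ has Cauchy--Schwarz tail $O(n\vartheta^{L_n})$. A union bound over $O(k^2)$ pairs per dyadic block needs $(1-\zeta)\nu_0>1$ and a pair-independent moment constant, which localization supplies. This reaches the same conclusion with more machinery. Your pointwise bound, via uniform $\nu$th moments of $z_n$ and \eqref{eq:det-moment}, is equally fine; the paper simply uses $w_n^2=O(n^\gamma)$ and $\sum_\ell\norm{C_\ell}<\infty$.

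For the score, note that Lemma~\ref{lem:bounded-LIL} assumes bounded coefficients, so it does not apply verbatim to $(z_j)_r$. The paper cites \cite[Lemma~5.2]{Guo95}, whose inputs are exactly the two facts you isolate: $\sum_{j<n}(z_j)_r^2=O(n)$ and $(z_n)_r^2=O(n^\gamma)$ with $\gamma<1$. If you instead rerun the truncation argument of Lemma~\ref{lem:bounded-LIL}, it works with truncation level $B_{j+1}^\beta$, provided $\beta<1/2$ satisfies
\[
 \beta\nu>1+\gamma(\nu-2)/2,\qquad \beta(\nu-1)>1/2+\gamma(\nu-2)/2 .
\]
Such a $\beta$ exists because $\gamma<1$. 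The argument also uses $B_n\asymp n$, which follows from the Gram estimate since $\Omega>0$.
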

\begin{proof}
The localized conditional moment bound and Borel--Cantelli give $w_n^2=O(n^\gamma)$. Absolute summability of the filter coefficients proves the last bound. For the Gram estimate choose $1<{\nu_0}<\min(\nu/2,2)$ and $\epsilon>0$ with $1/{\nu_0}+\epsilon<1$. For distinct lags $i,j$, the products $w_{n-i}w_{n-j}$ are martingale differences indexed by the later noise time and have uniformly bounded ${\nu_0}$th moments. For $i=j$, use $w_{n-i}^2-\sigma^2$, keeping the at most $i+1$ terms arising from zero initial histories. Equation~\eqref{eq:mart-max} gives, with a constant $C$ independent of $i,j,n$,
\[
 \left\|\max_{1\le k\le n}\left|\sum_{t=0}^{k-1}
 (w_{t-i}w_{t-j}-\sigma^2\ind_{\{i=j\}})\right|\right\|_{L^{\nu_0}}
 \le C(1+i+j)n^{1/{\nu_0}}.
\]
Expand $z_nz_n^T$ in its two filter lags. Minkowski's inequality and
$\sum_{i,j\ge0}(1+i+j)\norm{C_i}\norm{C_j}<\infty$
give an $O(n^{1/{\nu_0}})$ bound for the $L^{\nu_0}$ norm of the maximum Gram error. Markov and Borel--Cantelli at dyadic $n$ give the almost sure error $O(n^{1/{\nu_0}+\epsilon})$ for all $n$. Set $\zeta\triangleq 1-1/{\nu_0}-\epsilon$. For each coordinate $r$, the Gram estimate and the pointwise bound give
$\sum_{j=0}^{n-1}(z_j)_r^2=O(n)$ and $(z_n)_r^2=O(n^\gamma)$ almost surely, where $\gamma<1$.
Applying~\cite[Lemma~5.2]{Guo95} to the adapted coefficients $(z_j)_r$ gives the score bound in \eqref{eq:noise-estimates}.
\end{proof}

\begin{lemma}\label{lem:filter-moment}
Under the localized conditional moment bound $M$, for deterministic weights $\omega_{t,n}$ with $\sum_{t\ge0}\omega_{t,n}^2<\infty$,
\begin{equation}\label{eq:filter-moment}
 \E\norm{\sum_{t\ge0}\omega_{t,n}z_t}^\nu\le C_{\nu,M,z}\left(\sum_{t\ge0}\omega_{t,n}^2\right)^{\nu/2}.
\end{equation}
\end{lemma}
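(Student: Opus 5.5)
The plan is to rewrite $\sum_{t\ge0}\omega_{t,n}z_t$ as a single linear combination of the noise variables with deterministic coefficients, and then apply the moment bound \eqref{eq:det-moment} componentwise. By Lemma~\ref{lem:geometry}, $z_t=\sum_{\ell\ge0}C_\ell w_{t-\ell}$ with $w_j=0$ for $j\le0$. Formally exchanging the sums gives
\[
 \sum_{t\ge0}\omega_{t,n}z_t=\sum_{j\ge1}c_{j,n}w_j,\qquad
 c_{j,n}\triangleq\sum_{t\ge j}\omega_{t,n}C_{t-j}\in\R^{d-1}.
\]
The coefficient $c_{j,n}$ is the correlation of the weight sequence with the filter coefficients. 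Young's inequality for convolutions on $\ell^2$ then gives
\[
 \Bigl(\sum_{j}\norm{c_{j,n}}^2\Bigr)^{1/2}
 \le\Bigl(\sum_{\ell\ge0}\norm{C_\ell}\Bigr)\Bigl(\sum_t\omega_{t,n}^2\Bigr)^{1/2}.
\]
The first factor is a finite deterministic constant because $\norm{C_\ell}=O(\vartheta^\ell)$.

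Next, fix $n$ and truncate to finitely many terms. For $N\ge1$, put $X_N=\sum_{t<N}\omega_{t,n}z_t$, which is a finite sum $\sum_{j\le N}c^{(N)}_{j,n}w_j$ with $\norm{c^{(N)}_{j,n}}$ bounded by the same convolution estimate. Apply \eqref{eq:det-moment} to each of the $d-1$ coordinates and combine them with the elementary inequality $\norm{v}^\nu\le(d-1)^{\nu/2}\max_r|v_r|^\nu$. This yields $\E\norm{X_N}^\nu\le C(\sum_t\omega_{t,n}^2)^{\nu/2}$ with $C$ depending only on $\nu$, $M$ and the filter.

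To remove the truncation, note that the same bound applied to the differences $X_{N'}-X_N$ is controlled by $(\sum_{N\le t<N'}\omega_{t,n}^2)^{\nu/2}$, which tends to zero. So $(X_N)$ is Cauchy in $L^\nu$, and its limit is the series in \eqref{eq:filter-moment}, interpreted as an $L^\nu$, and hence almost sure along a subsequence, limit. Fatou's lemma then transfers the bound to the limit.

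The only delicate point is justifying the interchange of summation, and the truncation argument is designed to handle it: for finite $N$ the interchange is a finite rearrangement, since each $z_t$ is itself an absolutely convergent series in $L^\nu$ by \eqref{eq:det-moment}. I do not expect any real obstacle. The key input is just the absolute summability of $(C_\ell)$.
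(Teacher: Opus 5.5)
Your proposal is correct and follows essentially the same route as the paper. You rewrite the weighted sum as $\sum_j c_{j,n}w_j$ with $c_{j,n}=\sum_{\ell\ge0}\omega_{j+\ell,n}C_\ell$, bound the coefficient energy by the discrete Young inequality using $\sum_\ell\norm{C_\ell}<\infty$, apply \eqref{eq:det-moment} coordinatewise, and pass to the infinite sum via $L^\nu$ convergence of the truncations, using the same estimate for their differences.
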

\begin{proof}
For fixed $r$, exchanging the finite sums gives the coefficient
$\widetilde\omega_{j,n}\triangleq\sum_{\ell\ge0}\omega_{j+\ell,n}(C_\ell)_r$
of $w_j$ in coordinate $r$. Discrete Young's inequality gives
$\sum_{j=0}^{\infty}|\widetilde\omega_{j,n}|^2\le(\sum_{\ell=0}^{\infty}\norm{C_\ell})^2\sum_{t\ge0}\omega_{t,n}^2$.
Apply \eqref{eq:det-moment}. Infinite sums follow by $L^\nu$ convergence of their truncations and the same estimate for differences.
\end{proof}

\begin{lemma}\label{lem:weighted-series}
If $(a_m)_{m\ge0}$ is a deterministic scalar sequence with $\sum_{m=0}^{\infty}a_m^2<\infty$, then $\sum_{m=0}^{\infty} a_mz_m$ converges almost surely.
\end{lemma}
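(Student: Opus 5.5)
The plan is to rewrite $\sum_m a_m z_m$ as a series in the noise with deterministic square-summable coefficients, and then invoke the martingale convergence theorem. By the localization argument of Appendix~\ref{app:noise}, I will assume the deterministic conditional moment bound $M$. Since the conclusion concerns the original $z_m$, which agree with the modified ones on $\{\tau_M=\infty\}$, the result then transfers.

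First, fix a coordinate $r$ and exchange sums as in the proof of Lemma~\ref{lem:filter-moment}. For a finite truncation $N$,
\[
 \sum_{m=0}^{N}a_m(z_m)_r=\sum_{j=1}^{N}\widetilde a_{j,N}w_j,\qquad
 \widetilde a_{j,N}=\sum_{\ell=0}^{N-j}a_{j+\ell}(C_\ell)_r .
\]
The coefficients depend on $N$, so the partial sums are not directly the partial sums of a martingale. I will therefore split each partial sum as
\[
 \sum_{j=1}^N\widetilde a_jw_j-\sum_{j=1}^N\Bigl(\sum_{\ell>N-j}a_{j+\ell}(C_\ell)_r\Bigr)w_j ,
\]
where $\widetilde a_j\triangleq\sum_{\ell\ge0}a_{j+\ell}(C_\ell)_r$. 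By discrete Young's inequality and $\sum_\ell\norm{C_\ell}<\infty$, one has $\sum_j\widetilde a_j^2<\infty$. Hence $\sum_j\widetilde a_jw_j$ is an $L^2$-bounded martingale, with conditional variance sum $\sigma^2\sum_j\widetilde a_j^2$, and it converges almost surely.

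The main obstacle is showing that the remainder
\[
 \Delta_N\triangleq\sum_{j=1}^N\Bigl(\sum_{\ell>N-j}a_{j+\ell}(C_\ell)_r\Bigr)w_j
\]
tends to zero almost surely. Substituting $k=N-j$ gives $\Delta_N=\sum_{k\ge0}\beta_{k,N}w_{N-k}$ with $|\beta_{k,N}|\le\sum_{\ell>k}\norm{C_\ell}\,|a_{N-k+\ell}|$. Using $\norm{C_\ell}=O(\vartheta^\ell)$ and Cauchy--Schwarz,
\[
 \beta_{k,N}^2=O\Bigl(\vartheta^{k}\sum_{\ell>k}\vartheta^{\ell-k}a_{N-k+\ell}^2\Bigr)=O\Bigl(\vartheta^k\sum_{i>N}\vartheta^{\,i-N}a_i^2\Bigr)\quad\text{up to constants}.
\]
Therefore
\[
 \sum_k\beta_{k,N}^2=O(b_N),\qquad b_N\triangleq\sum_{i>N}\vartheta^{(i-N)}a_i^2 .
\]
Here $\sum_N b_N=O(\sum_i a_i^2)<\infty$. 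By \eqref{eq:det-moment} with $\nu>2$,
\[
 \E|\Delta_N|^\nu\le C b_N^{\nu/2}\le C' b_N\,(\sup_N b_N)^{\nu/2-1},
\]
which is summable in $N$. Markov's inequality and Borel--Cantelli then give $\Delta_N\to0$ almost surely. Combining the two parts yields convergence of $\sum_m a_m(z_m)_r$ for each coordinate $r$, which proves Lemma~\ref{lem:weighted-series}.
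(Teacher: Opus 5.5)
Your proof is correct, but it takes a different route from the paper's.

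The paper exchanges the sums by filter lag rather than by noise time: $\sum_{m\le N}a_mz_m=\sum_{\ell\ge0}C_\ell\sum_{m\le N}a_mw_{m-\ell}$. Each lag then gives an $L^2$-bounded martingale with the original coefficients $a_m$. Doob's inequality bounds $\E\sup_N\bigl|\sum_{m\le N}a_mw_{m-\ell}\bigr|$ by $2\sigma(\sum_ma_m^2)^{1/2}$, uniformly in $\ell$. Tonelli then makes $\sum_\ell\norm{C_\ell}\sup_N\bigl|\sum_{m\le N}a_mw_{m-\ell}\bigr|$ finite almost surely. This dominates the lag tails uniformly in $N$, so the limit in $N$ passes through the lag sum. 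That argument needs only $\sum_\ell\norm{C_\ell}<\infty$ and second moments, and it never meets $N$-dependent coefficients.

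Your decomposition by noise time instead identifies the limit explicitly as the single martingale series $\sum_j\widetilde a_jw_j$, the same device the paper uses for \eqref{eq:weighted-filter}. The cost is the boundary term $\Delta_N$, where you use the geometric decay of $C_\ell$ to get $\sum_Nb_N<\infty$.

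The localization and the $\nu$th-moment bound are unnecessary there. The coefficients of $\Delta_N$ are deterministic and $\E[w_j^2\mid\F_{j-1}]=\sigma^2$, so $\E\Delta_N^2=\sigma^2\sum_k\beta_{k,N}^2=O(b_N)$. This is already summable in $N$, so $\sum_N\Delta_N^2<\infty$ almost surely and hence $\Delta_N\to0$. Your version therefore also works with second moments alone.
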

\begin{proof}
After expanding the stable filter, Doob's inequality bounds the sum over filter lags of the maximal $L^2$ norms by $2\sigma\left(\sum_{m=0}^{\infty}a_m^2\right)^{1/2}\sum_{\ell=0}^{\infty}\norm{C_\ell}$. Each lag series converges, and the sum of their maximal $L^1$ norms is finite. The tails of the sum over filter lags vanish almost surely, uniformly in the upper time index, by Tonelli's theorem. Hence the filtered series converges.
\end{proof}

\Needspace{8\baselineskip}
\subsection{Martingale laws of the iterated logarithm}
\begin{lemma}\label{lem:bounded-LIL}
Let $\psi_j$ be predictable scalars with $\sup_{j\ge0}|\psi_j|<\infty$ almost surely, and let $B_n\triangleq 1+\sum_{j=0}^{n-1}\psi_j^2$. Under Assumption~\ref{ass:noise},
\begin{equation}\label{eq:bounded-martingale}
 \sum_{j=0}^{n-1}\psi_jw_{j+1}=O(\sqrt{B_n\ellfun(B_n)})\as
\end{equation}
If $B_n\to\infty$ almost surely, then
\begin{equation}\label{eq:bounded-LIL}
 \limsup_{n\to\infty}\frac{\sum_{j=0}^{n-1}\psi_jw_{j+1}}{\sigma\sqrt{2B_n\log\log B_n}}=1,
 \qquad
 \liminf_{n\to\infty}\frac{\sum_{j=0}^{n-1}\psi_jw_{j+1}}{\sigma\sqrt{2B_n\log\log B_n}}=-1
 \quad\text{a.s.}
\end{equation}
\end{lemma}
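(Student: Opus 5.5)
For \eqref{eq:bounded-martingale} I would localize as in the subsection on localization and conditioning, so that $\E[|w_{n+1}|^\nu\mid\F_n]\le M$ with $M$ deterministic. The bounded coefficients $\psi_j$ and the conditional variance $\sigma^2$ then place the martingale $N_n\triangleq\sum_{j=0}^{n-1}\psi_jw_{j+1}$ in the setting of a Stout-type LIL. Its conditional variance is $\langle N\rangle_n=\sigma^2(B_n-1)$. The increments $\psi_jw_{j+1}$ have conditional $\nu$th moments bounded by $(\sup_j|\psi_j|)^\nu M$, and $\nu>2$.

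I would split on the event $\{B_\infty<\infty\}$. On that event, the series criterion after \eqref{eq:power-martingale} gives convergence of $N_n$, so \eqref{eq:bounded-martingale} holds trivially. On $\{B_n\to\infty\}$, I would truncate the noise at level $\epsilon_j\sqrt{B_{j+1}/\log\log B_{j+1}}$, with $\epsilon_j\downarrow0$ slowly. Because the coefficients are bounded, $B_{j+1}\ge B_j$ grows at most linearly. Truncating at $\sqrt{B_j/\log\log B_j}$ is the condition in Stout's martingale LIL. The truncated tail has summable conditional probability and summable first moments, via the conditional $\nu$th moment and
\[
 \sum_j\frac{\psi_j^2}{B_{j+1}}\Bigl(\frac{\log\log B_{j+1}}{B_{j+1}}\Bigr)^{(\nu-2)/2}<\infty,
\]
which follows by comparison with $\int dx/(x^{\nu/2}\cdot\ldots)$. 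Recentering the truncated increments then changes the conditional variance only by $o(B_n)$. Stout's theorem (Hall--Heyde, Theorem 4.8, or the Freedman exponential inequality on geometric blocks $B_n\in[\theta^k,\theta^{k+1})$ followed by Borel--Cantelli) gives the upper half of \eqref{eq:bounded-LIL}. That in turn gives \eqref{eq:bounded-martingale} with $\ellfun(B_n)\ge\log\log B_n$.

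The lower bound in \eqref{eq:bounded-LIL} is the main obstacle. I would use the same blocks with $\theta$ large. The block increments are conditionally nearly Gaussian with variance about $\sigma^2B_{n_{k+1}}$, so I would prove it through a conditional Borel--Cantelli (Lévy) argument combined with an exponential lower bound for the truncated martingale, as in Stout's proof. An alternative is to embed the martingale in Brownian motion via Skorokhod (Hall--Heyde, Theorem 4.7) and use the Strassen-type result: the normalized time change satisfies $\sum_j \E[\psi_j^2w_{j+1}^2 \ind\{\cdot\}]/B_j<\infty$ thanks to the $\nu$-moment bound, and the Brownian LIL then transfers. The stopping times $\tau_k$ satisfy $\tau_k/\langle N\rangle\to1$ a.s. by the same moment summability. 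The liminf statement follows by applying the result to $-\psi_j$.

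Finally, the bound $M$ is removed via $\bigcup_M\{\tau_M=\infty\}$. On $\{\tau_M=\infty\}$ the modified sums agree with the originals, since the predictable coefficients are retained and only the noise is replaced.
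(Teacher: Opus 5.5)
Your proposal is correct in outline but takes a different route from the paper.

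\textbf{The paper's route.} It truncates $D_{j+1}=\psi_jw_{j+1}$ at the polynomial level $B_{j+1}^\beta$, with $1/\nu<\beta<1/2$. It proves $\sum_{j<n}D_{j+1}^2\sim\sigma^2B_n$ from the martingale strong law for the squared increments. It then applies the self-normalized LIL of de la Pe\~na, Klass and Lai: Theorem~5.4 on the event $\{B_n\to\infty\}$ for \eqref{eq:bounded-martingale}, and Theorem~6.1 with Corollary~6.2 for \eqref{eq:bounded-LIL}. Because it also localizes the coefficient bound, it has to continue stopped paths with coefficient one so that the variance still diverges.

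\textbf{Your route.} You normalize directly by the conditional variance $\sigma^2(B_n-1)$ and use Stout's theorem, so no strong law for $\sum_{j<n}D_{j+1}^2$ is needed. Since you replace only the noise and keep the original predictable coefficients, the hypothesis $B_n\to\infty$ survives localization without any continuation device. The price is that Stout's theorem assumes almost sure divergence of the conditional variance. For \eqref{eq:bounded-martingale}, where $B_n\to\infty$ is not assumed, you must therefore rely on your Freedman block argument, which works on the event, rather than on Stout's theorem.

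\textbf{Three smaller corrections.}
\begin{enumerate}
\item Stout's theorem already gives $\limsup=1$, not merely $\le1$. The ``main obstacle'' you identify is therefore covered by the result you cite, and the L\'evy or Skorokhod detour is unnecessary.
\item When $2<\nu\le3$, the $\nu$th moment bound does not make the conditional first moments of the truncated tails summable. For $\psi_j\equiv1$ the available bound is of order $\sum_j j^{-(\nu-1)/2}$ up to logarithms. What you actually need, and what does hold, is $\sum_{j<n}|a_{j+1}|=o(\sqrt{B_n})$ for the centering terms, as in \eqref{eq:LIL-centering}.
\item The slowly decaying $\epsilon_j$ is not needed. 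The paper's level $B_{j+1}^\beta$ is already $o(\sqrt{B_{j+1}/\log\log B_{j+1}})$ and gives summable conditional tail probabilities directly, so it would also satisfy Stout's truncation condition.
\end{enumerate}
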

\begin{proof}
Localize the conditional noise moment and the predictable coefficient bound. Constants $C$ below are independent of time and may depend on the localization levels and moment exponents. Write $D_{j+1}\triangleq \psi_jw_{j+1}$, so
$\E[|D_{j+1}|^\nu\mid\F_j]\le C\psi_j^2$.
Choose $1/\nu<\beta<1/2$ and truncate at the predictable level $B_{j+1}^\beta$:
\[
 Y_{j+1}\triangleq D_{j+1}\ind_{\{|D_{j+1}|\le B_{j+1}^\beta\}},\qquad
 a_{j+1}\triangleq \E[Y_{j+1}\mid\F_j],\qquad X_{j+1}\triangleq Y_{j+1}-a_{j+1}.
\]
Conditional Markov and the energy increment identity yield
\[
 \sum_{j=0}^{\infty}\Prob(|D_{j+1}|>B_{j+1}^\beta\mid\F_j)
 \le C\sum_{j=0}^{\infty}\frac{B_{j+1}-B_j}{B_{j+1}^{\beta\nu}}<\infty.
\]
Thus only finitely many increments are changed. Moreover,
$|a_{j+1}|\le C\psi_j^2B_{j+1}^{-\beta(\nu-1)}$.
On $\{B_n\to\infty\}$, integration against $dB$ with $\beta(\nu-1)>1/2$ gives
\begin{equation}\label{eq:LIL-centering}
 \sum_{j=0}^{n-1}|a_{j+1}|=o(\sqrt{B_n}),\qquad
 \sum_{j=0}^{\infty} a_{j+1}^2<\infty.
\end{equation}
Bounded increments of $B$ are used in the second assertion.

For $1<{\nu_0}<\min(\nu/2,2)$,
$\E[|D_{j+1}^2-\sigma^2\psi_j^2|^{\nu_0}\mid\F_j]\le C\psi_j^2$.
Apply the criterion for convergence of martingale series to these differences divided by $B_{j+1}$, followed by Kronecker's lemma. On $\{B_n\to\infty\}$, this gives
$\sum_{j=0}^{n-1}D_{j+1}^2\sim\sigma^2B_n$. Finite changes, \eqref{eq:LIL-centering} and Cauchy--Schwarz therefore imply
\[
 \sum_{j=0}^{n-1}X_{j+1}^2\sim\sigma^2B_n.
\]
The centered increments satisfy $|X_{j+1}|\le2B_{j+1}^\beta$, and
\[
 \max_{0\le j<n}2B_{j+1}^\beta
 =2B_n^\beta=o\!\left(\sqrt{B_n/\log\log B_n}\right).
\]
Thus the truncation interval in~\cite[Theorem 5.4]{dPKL04} contains the conditional support of every $X_{j+1}$ with $j<n$, eventually. Its conditional centering terms vanish because $\E[X_{j+1}\mid\F_j]=0$. Applying that theorem to $X_{j+1}$ and $-X_{j+1}$, and restoring the centering in \eqref{eq:LIL-centering} and the finitely many changed increments, gives \eqref{eq:bounded-martingale} on $\{B_n\to\infty\}$. On $\{\sup_{n\ge0}B_n<\infty\}$, martingale convergence gives the same bound. If $B_n\to\infty$ almost surely, the predictable increment bounds above also verify the hypotheses of~\cite[Theorem 6.1 and Corollary 6.2]{dPKL04}, yielding \eqref{eq:bounded-LIL}.

To remove localization for \eqref{eq:bounded-LIL}, stop when either localized bound fails and, after that time, continue with independent bounded increments with mean zero and variance $\sigma^2$ and coefficient one. The accumulated variance then diverges on stopped paths. On paths never stopped it diverges by the hypothesis on $B_n$. Apply the result to each continuation and take the union of the events on which stopping never occurs.
\end{proof}

For each bounded deterministic sequence $r_t$, one further has
\begin{equation}\label{eq:weighted-filter}
 \sum_{t=0}^{n-1}r_tz_t=O(\sqrt{nH_n})\as
\end{equation}
Indeed, set $a_j\triangleq \sum_{\ell\ge0}C_\ell r_{j+\ell}$. These deterministic vector coefficients are bounded, and
\[
 \sum_{t=0}^{n-1}r_tz_t=\sum_{j=1}^{n-1}a_jw_j
 -\sum_{j=1}^{n-1}\sum_{\ell\ge n-j}C_\ell r_{j+\ell}w_j.
\]
The first sum is $O(\sqrt{nH_n})$ by Lemma~\ref{lem:bounded-LIL}. Geometric decay of $C_\ell$ bounds the second by $O(\max_{0\le j<n}|w_j|)=O(n^{\gamma/2})$ for $2/\nu<\gamma<1$.

\subsection{Uniform regressor comparisons}
\begin{lemma}\label{lem:reference-Gram}
Let $I_n\triangleq [\lfloor n/2\rfloor,n)$ and $m_n$ be given by \eqref{eq:mn}. Almost surely, eventually and simultaneously for every $v\in\R^{d-1}$,
\begin{align}
 \sum_{t\in I_n}(v^Tz_t+y_{t+1}^*)^2
 &\ge \frac{\lambda_{\min}(\Omega)}{4}n\norm v^2+\frac12\sum_{t\in I_n}(y_{t+1}^*)^2,\label{eq:Gram-unweighted}\\
 \sum_{t=m_n}^{n-1}\frac{(v^Tz_t+y_{t+1}^*)^2}{t}
 &\ge \frac{\lambda_{\min}(\Omega)}{4}\norm v^2\log(n/m_n)
      +\frac12\sum_{t=m_n}^{n-1}\frac{(y_{t+1}^*)^2}{t}.\label{eq:Gram-weighted}
\end{align}
\end{lemma}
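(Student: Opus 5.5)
Expanding the square gives, for each window,
\[
 \sum_{t\in I_n}(v^Tz_t+y_{t+1}^*)^2
 =v^T\Bigl(\sum_{t\in I_n}z_tz_t^T\Bigr)v+2v^T\sum_{t\in I_n}y_{t+1}^*z_t+\sum_{t\in I_n}(y_{t+1}^*)^2 .
\]
The plan is to bound the quadratic term from below using Lemma~\ref{lem:noise-estimates}, show the cross term is of lower order, and absorb it with the elementary inequality $2|ab|\le\epsilon a^2+\epsilon^{-1}b^2$. Because the bounds below do not depend on $v$, the argument holds on a single full-probability event, simultaneously for all $v$.

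For \eqref{eq:Gram-unweighted}, take differences of the Gram estimate $G_n^w=n\Omega+O(n^{1-\zeta})$. This gives
\[
 \sum_{t\in I_n}z_tz_t^T=(n-\lfloor n/2\rfloor)\Omega+O(n^{1-\zeta}),
\]
so the quadratic term is at least $(\tfrac12-o(1))\lambda_{\min}(\Omega)n\norm v^2$. For the cross term, apply \eqref{eq:weighted-filter} to the bounded deterministic sequences $r_t=y_{t+1}^*\ind_{\{t<n\}}$ and $r_t=y_{t+1}^*\ind_{\{t<\lfloor n/2\rfloor\}}$, and subtract. This gives $\norm{\sum_{t\in I_n}y_{t+1}^*z_t}=O(\sqrt{nH_n})$. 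One point needs care: \eqref{eq:weighted-filter} is stated for a fixed sequence, not one truncated at the varying time $n$. However, its proof only uses $a_j$ and a tail term, and the truncation changes $a_j$ only for $j$ within geometric distance of $n$. I therefore expect to rerun that argument, applying Lemma~\ref{lem:bounded-LIL} to the fixed coefficients $\sum_\ell C_\ell y^*_{j+\ell+1}$; the boundary corrections are $O(\max_{j<n}|w_j|)=O(n^{\gamma/2})$. Now bound the cross term by
\[
 2\norm v\,O(\sqrt{nH_n})\le\tfrac{\lambda_{\min}(\Omega)}8 n\norm v^2+O(H_n).
\]
What remains is to dominate the leftover $O(H_n)$. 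If $\sum_{t\in I_n}(y_{t+1}^*)^2\ge K H_n$, half of the reference energy absorbs it. Otherwise the reference energy is small, and I will handle the case directly. Write $2v^T\sum y^*z\ge-\epsilon n\norm v^2-\epsilon^{-1}n^{-1}\norm{\sum y^*z}^2$. The point is that $\norm{\sum_{t\in I_n} y^*_{t+1}z_t}^2$ should be $O(\sum_{t\in I_n}(y^*_{t+1})^2\, H_n)$, i.e.\ of order the reference energy on the window times $H_n$, rather than $nH_n$. This follows by applying the self-normalized bound \eqref{eq:bounded-martingale} with $B$ equal to the window reference energy, rather than $n$. I expect this self-normalized refinement to be the main obstacle: the bound must be uniform over the windows $I_n$ and also handle the filter boundary. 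With it, the cross term is at most $\epsilon n\norm v^2+o(\sum(y^*)^2)+O(H_n/n)$-type errors. For the last additive constant, if the window reference energy is below $1$ while $\norm v$ is tiny, I will fall back to writing the constants of the statement as the stated fractions $1/4$ and $1/2$, with room to spare from $1/2-o(1)$ and $1$.

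For \eqref{eq:Gram-weighted}, run the same argument with weights $1/t$. Use Abel summation against $G_t^w=t\Omega+O(t^{1-\zeta})$ to obtain
\[
 \sum_{t=m_n}^{n-1}\frac{z_tz_t^T}{t}=\log(n/m_n)\,\Omega+O(m_n^{-\zeta}),
\]
which gives the quadratic lower bound $(1-o(1))\lambda_{\min}(\Omega)\norm v^2\log(n/m_n)$. The weighted cross term $\sum y^*_{t+1}z_t/t$ is controlled in the same way, by the self-normalized martingale bound with deterministic weights $y^*_{t+1}/t$. Its square is $O\bigl(\sum_t (y^*_{t+1})^2/t\cdot H_n\bigr)$ plus boundary terms of order $m_n^{-1}n^{\gamma/2}$. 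Split by $\epsilon$ as before, this cross term is absorbed into $\tfrac34$ of the quadratic term plus half of the weighted reference energy. Since $\log(n/m_n)\sim H_n/4\to\infty$, all residual constants are eventually absorbed.
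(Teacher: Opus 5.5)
Your outline (expand the square, bound the Gram term below via the noise Gram estimate, absorb the cross term) is correct, and the quadratic parts are fine. The gap is the cross term, which is the substance of the lemma. As you note, the differenced bound $O(\sqrt{nH_n})$ from \eqref{eq:weighted-filter} is useless when the window energy $\mathcal E_n\triangleq\sum_{t\in I_n}(y^*_{t+1})^2$ is small. The repair you propose is not available. \eqref{eq:bounded-martingale} concerns partial sums of one fixed predictable sequence, normalized by its cumulative energy from time $0$. The window coefficients change with $n$, and differencing two partial sums only yields that cumulative normalization, not $\mathcal E_n$. Worse, the target itself, $\norm{\sum_{t\in I_n}y^*_{t+1}z_t}^2=O(\mathcal E_nH_n)$ uniformly in $n$, is false under Assumption~\ref{ass:noise}. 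Take $p=q=1$, so that $z_t=c\,w_t$ with $c\ne0$. Let $y^*_{t+1}=1$ exactly when $t=2^k$ with $k$ even, and take i.i.d.\ symmetric noise with $\Prob(|w_1|>x)$ of order $x^{-\nu}(\log x)^{-2}$, which has a finite $\nu$th moment. At $n=2^k+1$ the window sum is $c\,w_{2^k}$ with $\mathcal E_n=1$ and $H_n$ of order $\log k$. But $w_{2^k}^2/\log k$ is almost surely unbounded, by the second Borel--Cantelli lemma.

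The missing idea is that no iterated-logarithm rate is needed; it suffices that $\norm{\sum_{t\in I_n}y^*_{t+1}z_t}^2=o(n\mathcal E_n)$. Divide the window sum by the deterministic factor $\mathcal E_n^{1/2}$, taking zero when $\mathcal E_n=0$. The quotient $\xi_n$ is a filter of the noise with deterministic weights of unit energy, so \eqref{eq:filter-moment} gives $\E\norm{\xi_n}^\nu=O(1)$. Markov's inequality and Borel--Cantelli over every $n$, using $\sum_n n^{-\nu/2}<\infty$, then give $\xi_n=o(\sqrt n)$. The bound $2|v^T\xi_n|\mathcal E_n^{1/2}\le\frac12\mathcal E_n+2\norm v^2\norm{\xi_n}^2$ absorbs the cross term with no additive error, simultaneously for all $v$. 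This also settles your vague fallback. Additive boundary errors such as $O(n^{\gamma/2})$ are not proportional to $\mathcal E_n^{1/2}$, so they cannot be absorbed uniformly in $v$ when $\mathcal E_n$ is tiny. They do not arise if you write the window sum exactly as a deterministic combination of the $w_j$. For \eqref{eq:Gram-weighted}, normalize by $W_n^{1/2}$ with $W_n\triangleq\sum_{t=m_n}^{n-1}(y^*_{t+1})^2/t$. The squared normalized weights then sum to at most $1/m_n$, so the $\nu$th moment is $O(m_n^{-\nu/2})$, which is summable, and the normalized cross term tends to zero. Your stated bound $O(W_nH_n)$ drops this factor $1/m_n$, and without it the absorption fails. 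Since $H_n\asymp\log(n/m_n)$, the leftover $\epsilon^{-1}O(W_nH_n)/\log(n/m_n)$ is a fixed multiple of $W_n$ rather than $o(W_n)$.
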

\begin{proof}
For \eqref{eq:Gram-unweighted}, divide $\sum_{t\in I_n}y_{t+1}^*z_t$ by
$(\sum_{t\in I_n}(y_{t+1}^*)^2)^{1/2}$, assigning zero when the denominator is zero, and call the result $\xi_n$. Equation~\eqref{eq:filter-moment} gives $\E\norm{\xi_n}^\nu=O(1)$. Markov, summability of $n^{-\nu/2}$ and Borel--Cantelli imply $\xi_n=o(\sqrt n)$ almost surely. Expanding the square and using
$2xy\le x^2/2+2y^2$ gives the lower bound
\[
 v^T\left(\sum_{t\in I_n}z_tz_t^T\right)v
 +\frac12\sum_{t\in I_n}(y_{t+1}^*)^2-2\norm v^2\norm{\xi_n}^2.
\]
The Gram estimate in \eqref{eq:noise-estimates} makes the first matrix $(n/2)\Omega+o(n)$ and proves the assertion simultaneously in $v$.

For \eqref{eq:Gram-weighted}, normalize $\sum_{t=m_n}^{n-1}y_{t+1}^*z_t/t$ by $(\sum_{t=m_n}^{n-1}(y_{t+1}^*)^2/t)^{1/2}$, again assigning zero when the denominator vanishes. The squared deterministic coefficients sum to at most $1/m_n$. Equation~\eqref{eq:filter-moment} therefore gives a $\nu$th moment bound $O(m_n^{-\nu/2})$, summable over $n$. The normalized cross term tends to zero almost surely. Abel summation of \eqref{eq:noise-estimates} gives
\[
 \sum_{t=m_n}^{n-1}\frac{z_tz_t^T}{t}
 =\Omega\log(n/m_n)+o(\log(n/m_n)).
\]
The same quadratic inequality proves the weighted assertion.
\end{proof}

\begin{lemma}\label{lem:uniform-regression}
There is a deterministic $K_0>0$ such that eventually almost surely
\begin{equation}\label{eq:B4}
 n\norm{\ups_n}^2+E_n(s_n-1)^2
 \le K_0\{V_n+(1+R_n)\norm{\thh_n-\theta}^2\}.
\end{equation}
More generally, eventually and simultaneously for $a\in\R$, $b\in\R^{d-1}$,
\begin{equation}\label{eq:uniform-info}
 n\norm b^2+E_na^2
 \le K_0\left\{\binom{a}{b}^{\!T}M_n\binom{a}{b}+(1+R_n)(a^2+\norm b^2)\right\}.
\end{equation}

\end{lemma}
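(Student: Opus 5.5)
The approach is to compare the actual regressors with the ideal tracking regressors $\ph_j^o$. For the ideal regressors the information in the directions $(a,b)$ can be bounded from below explicitly. The cost of the comparison is controlled by \eqref{eq:tracking-difference}.

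\textbf{Step 1: reduction to ideal regressors.} For $(a,b)$ put $\psi\triangleq L_\theta\binom ab=a\theta+Qb$, so that $\norm\psi^2=a^2+\norm b^2$. Since $M_0>0$,
\[
 \binom ab^{\!T}M_n\binom ab\ge\sum_{j=0}^{n-1}(\psi^T\ph_j)^2 .
\]
Write $\psi^T\ph_j=\psi^T\ph_j^o+\psi^T(\ph_j-\ph_j^o)$ and use $(u+v)^2\ge u^2/2-v^2$ together with \eqref{eq:tracking-difference}. This gives
\[
 \sum_{j=0}^{n-1}(\psi^T\ph_j)^2\ge\frac12\sum_{j=0}^{n-1}(\psi^T\ph_j^o)^2-K_r(1+R_n)(a^2+\norm b^2).
\]
It therefore suffices to show that, eventually and simultaneously in $(a,b)$, the ideal sum satisfies $\sum_{j<n}(\psi^T\ph_j^o)^2\ge c(n\norm b^2+E_na^2)$ for some deterministic $c>0$.

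Two facts identify the ideal sum. First, $\theta^T\ph_j^o=y_{j+1}^*$: the ideal input is built so that $\theta^T\ph^o_n=y^*_{n+1}$, equivalently $\theta^T\ph^w=0$ by Lemma~\ref{lem:geometry} and $\theta^T\mathcal Ty^*=y^*$. Second, $Q^T\ph_j^o=z_j+d_j$, where $d_j$ is deterministic, obtained by a stable filter of the bounded reference, and has $\sum_{j<n}\norm{d_j}^2=O(E_n)$. Hence
\[
 \psi^T\ph_j^o=c_j+b^Tz_j,\qquad c_j\triangleq ay_{j+1}^*+b^Td_j ,
\]
with $c=(c_j)$ a deterministic sequence depending linearly on $(a,b)$.

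\textbf{Step 2: uniform lower bound for the ideal Gram sum.} This is the main obstacle, since the lower bound must hold simultaneously over the whole parameter family $(a,b)$. The plan is to exploit that the vectors $(c_j)_{j<n}$ range over a deterministic subspace of $\R^n$ of dimension $k\le d$, spanned by $y^*_{\cdot+1}$ and the components of $d$.
\begin{enumerate}
\item Choose an $\ell^2[0,n)$-orthonormal basis $e^{(1)},\ldots,e^{(k)}$ of this subspace. Writing $c=\sum_i\alpha_ie^{(i)}$ gives $\sum_jc_j^2=\norm\alpha^2$.
\item Set $\xi_n^{(i)}\triangleq\sum_je^{(i)}_jz_j$. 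By Lemma~\ref{lem:filter-moment}, $\E\norm{\xi^{(i)}_n}^\nu=O(1)$.
\item Markov's inequality, summability of $n^{-\nu/2}$ (since $\nu>2$) and Borel--Cantelli then give $\xi_n^{(i)}=o(\sqrt n)$ almost surely.
\item Consequently, uniformly in $(a,b)$,
\[
 \Bigl|\sum_jc_jb^Tz_j\Bigr|\le\norm\alpha\,\norm b\,o(\sqrt n).
\]
\end{enumerate}

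\textbf{Step 3: assembling the lower bound.} Expand the square and combine Step 2 with the Gram estimate $G_n^w=n\Omega+o(n)$ of Lemma~\ref{lem:noise-estimates}:
\[
 \sum_{j<n}(c_j+b^Tz_j)^2\ge\frac12\sum_jc_j^2+\frac{\lambda_{\min}(\Omega)}2n\norm b^2
\]
eventually. Next, $\sum_jc_j^2\ge\frac12E_na^2-2\norm b^2\sum_j\norm{d_j}^2\ge\frac12E_na^2-C'n\norm b^2$, using $E_n=O(n)$ for a bounded reference. Apply this only to a fraction $\kappa\sum_jc_j^2$ of the first term, with $\kappa>0$ so small that $\kappa C'\le\lambda_{\min}(\Omega)/4$. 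This yields $\sum_j(\psi^T\ph_j^o)^2\ge c(n\norm b^2+E_na^2)$, and with Step 1 proves \eqref{eq:uniform-info}.

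Finally, take $a=s_n-1$ and $b=\ups_n$. Then the quadratic form in $M_n$ equals $V_n$ by definition of $M_n$ and \eqref{eq:coords}, and $a^2+\norm b^2=\norm{\thh_n-\theta}^2$ by orthogonality. Substituting into \eqref{eq:uniform-info} gives \eqref{eq:B4}.
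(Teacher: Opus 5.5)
Your proposal is correct and follows essentially the same route as the paper. Both arguments reduce to the ideal regressors $\ph_j^o$ via \eqref{eq:tracking-difference} and bound the cross term between the deterministic reference part and $z_j$ uniformly in $(a,b)$, using Lemma~\ref{lem:filter-moment}, Markov and Borel--Cantelli; the paper normalizes by $D_n(I+D_n^TD_n)^{-1/2}$ where you take an orthonormal basis of the column space, which is equivalent. Both then conclude with the Gram estimate $G_n^w=n\Omega+o(n)$.

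One small slip: $\norm{a\theta+Qb}^2=a^2\norm\theta^2+\norm b^2$ and $\norm{\thh_n-\theta}^2=\norm\theta^2(s_n-1)^2+\norm{\ups_n}^2$, so these quantities are comparable to, not equal to, $a^2+\norm b^2$. The resulting factor $\max\{1,\norm\theta^2,\norm\theta^{-2}\}$ is absorbed into $K_0$.
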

\begin{proof}
Let $\Phi_n,\Phi_n^o,Z_n$ have rows $\ph_j^T,(\ph_j^o)^T,(\ph_j^w)^T$ for $0\le j<n$, and put $D_n\triangleq \Phi_n^o-Z_n$. Then
\[
 Z_n\theta=0,\qquad D_n\theta=(y_1^*,\ldots,y_n^*)^T,
 \qquad\norm{D_n}^2=O(n).
\]
For the fixed reference, $D_n$ is deterministic, and each column of $D_n(I+D_n^TD_n)^{-1/2}$ has norm at most one. Since $\ph_j^w=Qz_j$, applying \eqref{eq:filter-moment} columnwise, followed by Markov and Borel--Cantelli, gives
\[
 n^{-1/2}\norm{Z_n^TD_n(I+D_n^TD_n)^{-1/2}}=o(1)\as
\]
Because $Z_n=Z_nQQ^T$, uniformly for $v\in\R^d$,
\[
 |2v^TZ_n^TD_nv|\le o(1)\{n\norm{Q^Tv}^2+\norm{D_nv}^2+\norm v^2\}.
\]
Combining this with \eqref{eq:noise-estimates} gives
\begin{equation}\label{eq:ideal-Gram-lower}
 (\Phi_n^o)^T\Phi_n^o\succeq \frac{\min\{\lambda_{\min}(\Omega),1\}}{4}(nQQ^T+D_n^TD_n)-o(1)I.
\end{equation}
Also $\norm{\Phi_n-\Phi_n^o}_F^2\le K_r(1+R_n)$ by \eqref{eq:tracking-difference}. Apply \eqref{eq:ideal-Gram-lower} to $v=a\theta+Qb$, and use
\[
 \norm{\Phi_n^ov}^2\le2\norm{\Phi_nv}^2+2K_r(1+R_n)\norm v^2,
 \qquad E_na^2\le2\norm{D_nv}^2+2\norm{D_nQ}^2\norm b^2.
\]
Since $\norm{D_nQ}^2=O(n)$, these inequalities give, uniformly in $a,b$ and with a deterministic implied constant,
\[
 n\norm b^2+E_na^2
 =O\!\left(\norm{\Phi_nv}^2+(1+R_n)\norm v^2\right).
\]
Now $\norm{\Phi_nv}^2\le v^TP_n^{-1}v=\binom{a}{b}^{\!T}M_n\binom{a}{b}$ and $\norm v^2\asymp a^2+\norm b^2$, which proves \eqref{eq:uniform-info}. Set $v=\thh_n-\theta$ and use $\norm{\Phi_nv}^2\le V_n$ to obtain \eqref{eq:B4}.
\end{proof}

\section{Auxiliary Noise Prediction}\label{app:auxiliary}

\begin{lemma}\label{lem:reference-cross}
For a bounded deterministic reference, put
$\bar f_n\triangleq n^{-1}(S_n^w)^T\Omega^{-1}z_n$ for $n\ge1$, and $\bar f_0\triangleq 0$. Then
\begin{equation*}
 \sum_{j=1}^{n-1}y_{j+1}^*\bar f_j=o(\log n)\as
\end{equation*}
\end{lemma}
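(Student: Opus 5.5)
Write $\bar f_j=j^{-1}(S_j^w)^T\Omega^{-1}z_j$ with $S_j^w=S_0+\sum_{i<j}z_iw_{i+1}$. Then
\[
 \sum_{j=1}^{n-1}y_{j+1}^*\bar f_j=\sum_{j=1}^{n-1}\frac{y_{j+1}^*}{j}(S_j^w)^T\Omega^{-1}z_j .
\]
The plan is to exploit that $z_j$ is a stable filter of the noise and that $S_j^w$ is essentially independent of the recent noise entering $z_j$. I would split $z_j=z_j^{(1)}+z_j^{(2)}$, where $z_j^{(1)}=\sum_{\ell<L_j}C_\ell w_{j-\ell}$ collects the recent lags and $z_j^{(2)}$ the remote ones, with $L_j\asymp\log j$ chosen large (a multiple of $\log j/\log(1/\vartheta)$). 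For this choice the tail $\norm{z_j^{(2)}}\le\sum_{\ell\ge L_j}\norm{C_\ell}|w_{j-\ell}|$ is $O(j^{-2})$ times $\max|w|=O(j^{\gamma/2})$. Combined with $S_j^w=O(\sqrt{jH_j})$ from Lemma~\ref{lem:noise-estimates}, the remote part gives a summable series, hence $O(1)$.

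For the recent part I would also replace $S_j^w$ by $S_{j-L_j}^w$. The difference $\sum_{j-L_j\le i<j}z_iw_{i+1}$ has size $O(L_j j^{\gamma})$ pathwise, which is too crude. Instead, I would treat the whole sum as a martingale-type expression. Exchanging sums gives
\[
 \sum_j\frac{y_{j+1}^*}{j}(S_{j-L_j}^w)^T\Omega^{-1}C_\ell w_{j-\ell}=\sum_k\psi_k w_{k+1},
\]
where the coefficient $\psi_k$ collects the terms with $j-\ell=k+1$. Here $\psi_k$ is $\F_k$-measurable because $S_{j-L_j}^w$ only involves noise up to time $j-L_j\le k+1-1$ (using $\ell<L_j$). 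Its size is $\psi_k=O\bigl(\sum_\ell\vartheta^\ell\sqrt{kH_k}/k\bigr)=O(\sqrt{H_k/k})$. Then $B_n=1+\sum\psi_k^2=O(H_n\log n)$, and \eqref{eq:power-martingale} gives $O((H_n\log n)^{1/2+\epsilon})=o(\log n)$.

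The correction term with $S_j^w-S_{j-L_j}^w$ I would handle by the same exchange. It is a double sum of products $z_iw_{i+1}w_{j-\ell}$ with $i,j-\ell$ within $O(\log j)$ of each other, weighted by $1/j$. The diagonal contributions where noise indices coincide produce a deterministic mean $\sigma^2\sum_j y_{j+1}^*j^{-1}c_j$. The coefficients $c_j$ come from $\E[z_iw_{i+1}w_{j-\ell}]$, which is nonzero only when $i+1=j-\ell$ and then equals $\E[z_iw_{i+1}^2]=0$ because $z_i$ is $\F_i$-measurable and has mean zero. So the mean vanishes, and the fluctuation has variance $O\bigl(\sum_j L_j^2/j^2\bigr)<\infty$. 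With $\nu$th moment control via \eqref{eq:det-moment}, a martingale/Borel--Cantelli argument then gives $O(1)$ or at least $o(\log n)$.

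The main obstacle is this cross correction: making the dependence bookkeeping between $S_j^w$ and $z_j$ rigorous, with predictable coefficients under only $\nu>2$ moments. If that becomes messy, I would fall back on a second-moment computation with truncated noise. I would bound $\E\bigl(\sum_j y_{j+1}^*\bar f_j\bigr)^2=O(\log n)$ by noting that all fourth-order correlations decay geometrically in lag differences, so the covariance of terms $j,j'$ is $O(\vartheta^{|j-j'|}/\sqrt{jj'})$ up to logarithmic factors. Chebyshev along $n=2^{2^k}$ and monotone interpolation of the weights between these times would then give $o(\log n)$ almost surely.
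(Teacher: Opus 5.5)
Your estimate for the remote lags and your treatment of the main term with $S^w_{j-L_j}$ are sound. The coefficients $\psi_k$ are predictable with $\psi_k=O(\sqrt{H_k/k})$, so \eqref{eq:power-martingale} gives $o(\log n)$. The boundary terms from cutting the exchanged sum at $j\le n-1$ are $o(1)$ because $S_n^w=O(\sqrt{nH_n})$.

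The gap is the cross correction $\sum_j j^{-1}y^*_{j+1}(S_j^w-S^w_{j-L_j})^T\Omega^{-1}z_j^{(1)}$. You correctly flag it as the crux, but you do not control it, for three reasons.

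\begin{enumerate}
\item The variance bound $O(\sum_jL_j^2/j^2)$ needs fourth noise moments. Coinciding indices produce $w_{i+1}^2z_i$, and $z_i$ shares noise with $z_j^{(1)}$, which produces $w_k^2w_{i+1}$. Only $\nu>2$ is assumed.
\item The coinciding-index terms are not merely mean-zero fluctuations. Their unconditional mean does vanish, but $w_{i+1}^2z_i^T\Omega^{-1}C_\ell$ has conditional mean $\sigma^2z_i^T\Omega^{-1}C_\ell$. This is a linear functional of the noise, not a martingale increment, and it must be split off.
\item Even with fourth moments, a bounded variance of partial sums of a correlated, non-martingale sequence does not give almost sure control without a maximal inequality. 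Also, \eqref{eq:det-moment} applies only to deterministic coefficients, whereas yours are products of noise.
\end{enumerate}

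The fallback has the same defect. The summands $y^*_{j+1}\bar f_j$ are signed, so nothing monotone can be interpolated between $n_k=2^{2^k}$ and $n_{k+1}=n_k^2$. Controlling $\max_{n_k\le n<n_{k+1}}$ needs a maximal inequality. Generic quasi-orthogonality bounds such as Rademacher--Menshov supply one only with extra powers of $\log n$, which destroy the $o(\log n)$ conclusion.

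The missing ingredient is an exact decomposition into martingale differences with predictable coefficients. The quadratic pieces must then be controlled through $\nu_0$th moments, $1<\nu_0<\min(\nu/2,2)$, via \eqref{eq:mart-max}, not through variances. The paper gets this without any lag truncation:
\begin{enumerate}
\item Since $d\ge2$ forces $p\ge1$, $z_n$ obeys a finite-dimensional stable recursion $z_{n+1}=\mathsf Fz_n+\mathsf c\,w_{n+1}$.
\item The deterministic matrices $\mathsf K_n=\sum_{j\ge0}\frac{y^*_{n+j+1}}{n+j}\Omega^{-1}\mathsf F^j$ satisfy $\mathsf K_n=\frac{y^*_{n+1}}{n}\Omega^{-1}+\mathsf K_{n+1}\mathsf F$.
\item With the corrector $\Xi_n=(S_n^w)^T\mathsf K_nz_n$, this yields the exact identity \eqref{eq:reference-telescope}.
\end{enumerate}
That identity splits $y^*_{n+1}\bar f_n$ into four kinds of term:
\begin{enumerate}
\item a telescoping term, with $\Xi_n=o(1)$;
\item the martingale difference $(S_n^w)^T\mathsf K_{n+1}\mathsf c\,w_{n+1}$, whose second moment is $O(1/n)$, so it is $o(\log n)$ by Kronecker's lemma;
\item two martingale differences quadratic in the noise, with $\nu_0$th moments $O(n^{-\nu_0})$;
\item the deterministic-weight series $\sigma^2\sum_nz_n^T\mathsf K_{n+1}\mathsf c$, which converges by Lemma~\ref{lem:weighted-series}.
\end{enumerate}

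Within your own framework, the analogous repair is to sort every product $w_{i+1}w_{j-\ell}$ in the correction by its later noise index. On the diagonal, write $w_{i+1}^2=(w_{i+1}^2-\sigma^2)+\sigma^2$. Then apply the same three tools: second moments for the linear-coefficient martingale, $\nu_0$th moments with \eqref{eq:mart-max} for the quadratic-coefficient martingales, and Lemma~\ref{lem:weighted-series} for the $\sigma^2$ series. You would also still need to handle the boundary terms near $j\approx n$ with the score bound.
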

\begin{proof}
Work under the deterministic conditional moment bound of Appendix~\ref{app:noise}. The ideal noise regressor has a finite-dimensional stable recursion. Indeed, Assumption~\ref{ass:plant} and $d\ge2$ imply $p\ge1$: if $p=0$, coprimeness forces $B$ to be constant and the degree condition forces $q=1$. Put
\[
 \xi_n\triangleq (w_n,\ldots,w_{n-p+1},u_{n-1}^w,\ldots,u_{n-q+1}^w)^T.
\]
The ideal equation gives
\[
 u_n^w=b_1^{-1}\left(\sum_{i=1}^pa_iw_{n-i+1}
                     -\sum_{j=2}^qb_ju_{n-j+1}^w\right).
\]
Deleting the current input from $\ph_n^w$ gives $\xi_n$. The displayed formula recovers $u_n^w$ from $\xi_n$. Since $\ph_n^w\in\theta^\perp$, $\xi_n$ and $z_n=Q^T\ph_n^w$ are related by a nonsingular deterministic linear map. The shift recursion of $\xi_n$ is block lower triangular: its output block is nilpotent, and its input block has characteristic polynomial
$\lambda^{q-1}+(b_2/b_1)\lambda^{q-2}+\cdots+b_q/b_1$ when $q>1$. Minimum phase makes all its roots lie inside the unit disk. Thus there are a deterministic stable matrix $\mathsf F\in\R^{(d-1)\times(d-1)}$ and a deterministic vector $\mathsf c\in\R^{d-1}$ such that
\begin{equation}\label{eq:finite-noise-state}
 z_{n+1}=\mathsf Fz_n+\mathsf c\,w_{n+1}.
\end{equation}

For $n\ge1$, define the deterministic $(d-1)\times(d-1)$ matrices
\[
 \mathsf K_n\triangleq\sum_{j\ge0}\frac{y_{n+j+1}^*}{n+j}\Omega^{-1}\mathsf F^j.
\]
Boundedness of the reference gives $\norm{\mathsf K_n}=O(1/n)$, and
$\mathsf K_n=\frac{y_{n+1}^*}{n}\Omega^{-1}+\mathsf K_{n+1}\mathsf F$.
Set $\Xi_n\triangleq (S_n^w)^T\mathsf K_nz_n$. Expanding \eqref{eq:finite-noise-state} together with $S_{n+1}^w=S_n^w+z_nw_{n+1}$ gives
\begin{align}\label{eq:reference-telescope}
 y_{n+1}^*\bar f_n
 ={}&\Xi_n-\Xi_{n+1}
 +(S_n^w)^T\mathsf K_{n+1}\mathsf c\,w_{n+1}\notag\\
 &+z_n^T\mathsf K_{n+1}\mathsf Fz_n\,w_{n+1}
 +z_n^T\mathsf K_{n+1}\mathsf c\,(w_{n+1}^2-\sigma^2)\notag\\
 &+\sigma^2z_n^T\mathsf K_{n+1}\mathsf c.
\end{align}
The three centered terms on the right are martingale differences. Denote the first by $D_{n+1}$. Each coefficient is $\F_n$-measurable. Orthogonality of score increments gives $\E\norm{S_n^w}^2=O(n)$, so $\E D_{n+1}^2=O(1/n)$. Hence
\[
 \sum_{n\ge1}\frac{\E D_{n+1}^2}{\log^2(n+1)}<\infty.
\]
Martingale convergence followed by Kronecker's lemma yields $\sum_{j=1}^{n-1}D_{j+1}=o(\log n)$ almost surely.

Fix $1<\nu_0<\min(\nu/2,2)$. The localized noise moment bound and the uniform $2\nu_0$th moment bound for $z_n$ give
\begin{align*}
 &\E\bigl|z_n^T\mathsf K_{n+1}\mathsf Fz_n\,w_{n+1}\bigr|^{\nu_0}
 +\E\bigl|z_n^T\mathsf K_{n+1}\mathsf c\,(w_{n+1}^2-\sigma^2)\bigr|^{\nu_0}
 =O(n^{-\nu_0}).
\end{align*}
Equation~\eqref{eq:mart-max} therefore makes both series converge almost surely. The final term of \eqref{eq:reference-telescope} has a convergent series by Lemma~\ref{lem:weighted-series}, applied componentwise to $\mathsf K_{n+1}\mathsf c$.
Finally, \eqref{eq:noise-estimates} gives, for $2/\nu<\gamma<1$,
\[
 |\Xi_n|=O\!\left(\sqrt{H_n}\,n^{(\gamma-1)/2}\right)=o(1).
\]
Summing \eqref{eq:reference-telescope} proves the claim. The localization is removed as in Appendix~\ref{app:noise}.
\end{proof}

\begin{proof}[Proof of Lemma~\ref{lem:aux}]
By \eqref{eq:noise-estimates}, inversion of $G_n^w$ gives
\[
 |f_n-\bar f_n|^2=O(H_nn^{-1-2\zeta}\norm{z_n}^2).
\]
Since $\sum_{j=0}^{n-1}\norm{z_j}^2=O(n)$, Abel summation yields
\begin{equation}\label{eq:fbar-square}
 \sum_{n\ge1}(f_n-\bar f_n)^2<\infty\as
\end{equation}
The auxiliary regression has true parameter zero, regressor $z_n$, response $w_{n+1}$ and estimate $(G_n^w)^{-1}S_n^w$. Put
\[
 U_n^w\triangleq (S_n^w)^T(G_n^w)^{-1}S_n^w,
 \qquad h_n^w\triangleq z_n^T(G_n^w)^{-1}z_n.
\]
The noise estimates give $U_n^w=O(H_n)$ and $h_n^w=O(n^{\gamma-1})$ for a fixed $2/\nu<\gamma<1$. For $1<{\nu_0}<\min(\nu/2,2)$, Tonelli and the localized $2{\nu_0}$th moment bound for $z_n$ give $\sum_{n=1}^{\infty}\norm{z_n}^{2\nu_0}/n^{\nu_0}<\infty$ almost surely. The eventual inverse bound $\norm{(G_n^w)^{-1}}=O(1/n)$ then gives $\sum_{n=0}^{\infty}(h_n^w)^{\nu_0}<\infty$. Since $(h_n^w)^{2-\nu_0}U_n^w=O(n^{(\gamma-1)(2-\nu_0)}H_n)$ is bounded, one also has $\sum_{n=0}^{\infty}(h_n^w)^2U_n^w<\infty$. Put $A_n\triangleq\sum_{j=0}^{n-1}f_j^2$. Lemma~\ref{lem:general-energy}, with auxiliary predictable residual $-f_n$, shows that
\begin{equation}\label{eq:aux-energy-law}
 A_n-2\sum_{j=0}^{n-1}f_jw_{j+1}+U_n^w
 -\sigma^2\log\frac{\det G_n^w}{\det G_0}
\end{equation}
converges almost surely to a finite random variable. Apply \eqref{eq:power-martingale} with $\psi_j=f_j$ and $\epsilon=1/4$ to obtain
\[
 \sum_{j=0}^{n-1}f_jw_{j+1}=O((1+A_n)^{3/4})\as
\]
Since $\log\det G_n^w=(d-1)\log n+O(1)$ and $U_n^w\ge0$, \eqref{eq:aux-energy-law} gives $A_n=O(\log n)+O((1+A_n)^{3/4})$. Absorbing the sublinear term yields $A_n=O(\log n)$. The martingale term is therefore $o(\log n)$, and $U_n^w=O(H_n)=o(\log n)$, which proves \eqref{eq:aux-law}.

To prove \eqref{eq:aux-reference}, expand the square and bound the cross term involving $f_j-\bar f_j$ to obtain
\[
 (y_{j+1}^*-f_j)^2
 \ge\tfrac12(y_{j+1}^*)^2+f_j^2
 -2y_{j+1}^*\bar f_j-2(f_j-\bar f_j)^2.
\]
Sum from $j=\lceil n^\alpha\rceil$ to $n-1$. Subtraction of partial sums in \eqref{eq:aux-law} gives $4c_\alpha\log n+o(\log n)$ for the $f_j^2$ term. Lemma~\ref{lem:reference-cross} and \eqref{eq:fbar-square} make the last two sums $o(\log n)$ and $o(1)$, respectively. This proves \eqref{eq:aux-reference}. Finally, along any subsequence on which $E_n=O(\log n)$,
\[
 \left|\sum_{j=0}^{n-1}y_{j+1}^*(f_j-\bar f_j)\right|
 \le\sqrt{E_n}\left(\sum_{j\ge1}(f_j-\bar f_j)^2\right)^{1/2}
 =O(\sqrt{\log n}),
\]
which proves the last assertion of Lemma~\ref{lem:aux}.
\end{proof}

\section{Bounds under Persistent Gain Error}\label{app:endpoint}
Fix $\eta>0$ and let $\calB$ be the set defined in Section~\ref{sec:identification}. Lemmas~\ref{lem:endpoint-basic} and~\ref{lem:short-window} concern the case in which $\calB$ is infinite.

\begin{lemma}\label{lem:endpoint-basic}
For fixed $2/\nu<\delta<\alpha<1$, almost surely, along $n\in\calB$,
\begin{equation*}
 \mu_n=O(n^\delta),\qquad J_n=O(\log n),
\end{equation*}
and $|s_n-1|\ge\eta/(2|b_1|)$ eventually. Also,
\begin{equation}\label{eq:D-max}
 \max_{0\le t\le n}s_t^2=O(\log n),\qquad
 \max_{0\le t<n}\norm{x_t}^2=O(n^\delta).
\end{equation}
Uniformly for $n^\alpha\le t\le n$,
\begin{equation*}
 G_t/t=\Omega+o(1),\qquad U_t=O(H_n),\qquad\max_{n^\alpha\le t<n}\lambda_t=o(1).
\end{equation*}
\end{lemma}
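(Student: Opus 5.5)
Everything starts from \eqref{eq:Guo-bad} in Proposition~\ref{prop:Guo}. Along $n\in\calB$ it gives $r_n=O(n)$, $R_n=O(n^\delta)$ and $V_n=O(\log n)$, and the lemma follows by translating these into the scale and transverse coordinates.

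\textbf{Step 1: first bounds.}
\begin{itemize}
\item Since $x_t$ and $\pi_t$ are linear images of $\ph_t$, we have $\tr M_n=O(r_n)=O(n)$.
\item Put $\pi_t=\varepsilon_t+y_{t+1}^*$, with bounded reference. Then $\mu_n\le\mu_0+2R_n+2E_n$, which is $O(n^\delta)$ only if $E_n$ is controlled. So I would not read off $\mu_n=O(n^\delta)$ directly. Instead I would use $\mu_n-J_n=k_n^TG_nk_n$ and $J_n\le\mu_n$, and derive $\mu_n$ from the structure below.
\item For $J_n=O(\log n)$: $J_n$ is the Schur complement of $M_n$, so $J_n(s_n-1)^2\le V_n=O(\log n)$ by \eqref{eq:V}.
\item The gain separation comes from $\bhat_{1,n}-b_1=b_1(s_n-1)+e_{p+1}^TQ\ups_n$. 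If $\norm{\ups_n}$ is small, then $|s_n-1|\ge\eta/(2|b_1|)$. Combined with $J_n\ge J_0>0$, this yields $J_n=O(\log n)$.
\end{itemize}

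\textbf{Step 2: transverse information, the main step.} I would apply \eqref{eq:uniform-info} of Lemma~\ref{lem:uniform-regression} at every $t\le n$, with $R_t\le R_n=O(n^\delta)$.
\begin{itemize}
\item Taking $a=0$ gives $G_t\succeq (t/K_0)I-O(n^\delta)I$.
\item Combined with \eqref{eq:noise-difference}, namely $\sum\norm{x_j-z_j}^2\le K_r\mu_t$, and the Gram estimate $G^w_t=t\Omega+O(t^{1-\zeta})$ of Lemma~\ref{lem:noise-estimates}, this gives $G_t=t\Omega+O(\sqrt{t\mu_t}+\mu_t+t^{1-\zeta})$.
\item Hence $G_t/t\to\Omega$ uniformly on $t\ge n^\alpha$, provided $\mu_n=O(n^\delta)$ with $\delta<\alpha$.
\end{itemize}
The hard part is therefore bounding $\mu_n$. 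I would write $\pi_t=\varepsilon_t+y^*_{t+1}$ and use \eqref{eq:scale-ce} in the form $s_t\pi_t=y^*_{t+1}-\ups_t^Tx_t$. I would first prove $\max_t s_t^2=O(\log n)$ from $\norm{\thh_t}^2=O(\log(e+r_t))$ in \eqref{eq:Guo-V}, since $|s_t|\le\norm{\thh_t}/\norm\theta$. This yields the first half of \eqref{eq:D-max}.

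Near times where $s_t\approx0$ this does not bound $\pi_t$. My fallback is Lemma~\ref{lem:uniform-regression} with $a\ne0$, which gives $E_n(s_n-1)^2\le K_0(V_n+(1+R_n)\norm{\thh_n-\theta}^2)$. I would then bound $\mu_n=O(R_n+E_n)$ with $E_n$ handled separately. I expect this step to be the main obstacle, and I would check carefully whether the intended bound is on the residual part only.

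\textbf{Step 3: remaining bounds.}
\begin{itemize}
\item For the second half of \eqref{eq:D-max}: $\max\norm{x_t}^2=O(n^\gamma+\mu_n)$ by \eqref{eq:x-max-mu}, which is $O(n^\delta)$ once $\gamma<\delta$.
\item For $U_t=O(H_n)$: use \eqref{eq:S}, $U_t=S_t^TG_t^{-1}S_t$ with $G_t\asymp tI$. Bound $S_t$ by Lemma~\ref{lem:bounded-LIL}, applied coordinatewise to $x_j$, after splitting $x_j=z_j+(x_j-z_j)$. The $z$ part gives $O(\sqrt{tH_n})$ by Lemma~\ref{lem:noise-estimates}. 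The perturbation part gives $O((\mu_t)^{1/2+\epsilon})$ by \eqref{eq:power-martingale}, which is $o(\sqrt t)$ on $t\ge n^\alpha$.
\item For $\lambda_t$: $\lambda_t\le\norm{x_t}^2/\lambda_{\min}(G_t)=O(n^\delta/n^\alpha)=o(1)$.
\item Finally, $U_t=O(H_n)$ forces $\norm{\rho_t}^2=O(H_n/t)$. Together with $\norm{k_n}^2=O(\mu_n/n)$, this makes $\ups_n$ small, which closes the gain-separation claim of Step 1.
\end{itemize}
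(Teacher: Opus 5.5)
\textbf{There is a genuine gap, at exactly the step you flag.} You never establish $\mu_n=O(n^\delta)$, and your route to the gain separation $|s_n-1|\ge\eta/(2|b_1|)$ is circular.

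At the end of Step~3 you make $\ups_n$ small using $\norm{\rho_n}^2=O(H_n/n)$ and $\norm{k_n}^2(s_n-1)^2=O(\mu_n\log n/n)$. But each of these inputs needs $\mu_n=O(n^\delta)$ first:
\begin{itemize}
\item $U_t=O(H_n)$ needs it through $S_t-S_t^w=O(\mu_t^{1/2+\epsilon})=o(\sqrt t)$;
\item $G_t/t\to\Omega$ needs it as well;
\item the $k_n$ term needs it directly.
\end{itemize}
Meanwhile, your fallback for bounding $\mu_n$ goes through $E_n$, and that requires the lower bound on $|s_n-1|$. Your bullet deriving $J_n=O(\log n)$ also leans on this gain separation. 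So the whole chain rests on a bound that is never proved. Your hesitation about ``the residual part only'' also misreads the statement: along $\calB$ the bound really is on all of $\mu_n$, including the reference energy $E_n$.

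\textbf{The missing idea.} The single inequality \eqref{eq:B4} controls both the transverse error and the reference energy at once, with no prior bound on $\mu_n$.
\begin{itemize}
\item Since $P_n^{-1}\succeq P_0^{-1}>0$, the bound $V_n=O(\log n)$ gives $\norm{\thh_n-\theta}^2=O(\log n)$. You never state this, but \eqref{eq:B4} needs it.
\item Then \eqref{eq:B4} yields $n\norm{\ups_n}^2+E_n(s_n-1)^2=O((1+R_n)\log n)$.
\item Apply \eqref{eq:Guo-bad} with a residual exponent strictly between $2/\nu$ and $\delta$, so the right side is $O(n^\delta)$. Using $R_n=O(n^\delta)$ directly would only give $O(n^\delta\log n)$.
\item Now $\norm{\ups_n}^2=O(n^{\delta-1})=o(1)$. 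The identity $\bhat_{1,n}-b_1=b_1(s_n-1)+e_{p+1}^TQ\ups_n$ then gives the gain separation directly.
\item Feeding that lower bound back into the same inequality gives $E_n=O(n^\delta)$. Hence $\mu_n\le\mu_0+2E_n+2R_n=O(n^\delta)$, and $J_n(s_n-1)^2\le V_n$ gives $J_n=O(\log n)$.
\end{itemize}
In short, a persistent gain error forces the reference energy itself to be small.

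\textbf{Once this is in place, your remaining steps are sound and match the paper.} Your bound on $\max_t s_t^2$ via $|s_t|\le\norm{\thh_t}/\norm\theta$ and monotonicity of $r_t$ is correct. So are the bound on $\max\norm{x_t}^2$ via \eqref{eq:x-max-mu}, the limit $G_t/t\to\Omega$ via \eqref{eq:noise-difference} and the Gram estimate, $U_t=O(H_n)$ via the split $S_t=S_t^w+(S_t-S_t^w)$, and $\lambda_t=O(n^{\delta-\alpha})$.

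Two minor points:
\begin{itemize}
\item Lemma~\ref{lem:bounded-LIL} does not apply to the unbounded coefficients $x_j$. Your subsequent split does not need it.
\item The lower bound $G_t\succeq(t/K_0)I-O(n^\delta)I$ from \eqref{eq:uniform-info} with $a=0$ is unnecessary.
\end{itemize}
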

\begin{proof}
Apply Proposition~\ref{prop:Guo} with a residual exponent strictly between $2/\nu$ and $\delta$ to obtain $r_n=O(n)$, $(1+R_n)\log n=O(n^\delta)$ and $V_n=O(\log n)$, absorbing the extra logarithm. Since $P_n^{-1}\succeq P_0^{-1}>0$ and $V_n=O(\log n)$, \eqref{eq:V} gives $\norm{\thh_n-\theta}^2=O(\log n)$. Lemma~\ref{lem:uniform-regression} yields
\begin{equation}\label{eq:D-coord}
 n\norm{\ups_n}^2+E_n(s_n-1)^2=O((1+R_n)\log n).
\end{equation}
Thus $\ups_n=o(1)$ along $n\in\calB$. The identity
$\bhat_{1,n}-b_1=b_1(s_n-1)+e_{p+1}^TQ\ups_n$
gives the lower bound on $|s_n-1|$. This lower bound and \eqref{eq:D-coord} give $E_n=O((1+R_n)\log n)$, and
$\mu_n\le\mu_0+2E_n+2R_n$ gives the bound on $\mu_n$.
Equation~\eqref{eq:V} gives $J_n=O(\log n)$.

Monotonicity of $r_t$ and \eqref{eq:Guo-V} give the bound on $\max_{0\le t\le n}s_t^2$ in \eqref{eq:D-max}. Choose $2/\nu<\gamma<\delta$ in \eqref{eq:x-max-mu} to obtain the bound on $x_t$.

By \eqref{eq:noise-difference}, $t^{-1}\sum_{j=0}^{t-1}\norm{x_j-z_j}^2=O(\mu_n/t)=o(1)$ uniformly for $n^\alpha\le t\le n$. Cauchy--Schwarz and \eqref{eq:noise-estimates} give $G_t/t=\Omega+o(1)$. Also
\begin{equation}\label{eq:S-difference}
 S_t-S_t^w=\sum_{j=0}^{t-1}(x_j-z_j)w_{j+1}=O((1+\mu_t)^{1/2+\epsilon})
\end{equation}
by \eqref{eq:power-martingale}. Choose $\epsilon>0$ so that $\delta(1+2\epsilon)<\alpha$. Since $\norm{G_t^{-1}}=O(1/t)$ and $S_t^w=O(\sqrt{tH_n})$, \eqref{eq:S-difference} gives
$U_t=O(H_n+(1+\mu_n)^{1+2\epsilon}/t)=O(H_n)$.
The bound on $\max_{0\le t<n}\norm{x_t}^2$ in \eqref{eq:D-max} gives $\max_{n^\alpha\le t<n}\lambda_t=O(n^{\delta-\alpha})=o(1)$.
\end{proof}

\begin{lemma}\label{lem:prediction-comparison}
For a fixed bounded deterministic reference, consider integers $1\le m=m(n)<n$ with $m\to\infty$. Along any sequence on which $\mu_n=o(m)$ and $G_t\asymp tI$ uniformly for $m\le t\le n$, almost surely,
\begin{equation}\label{eq:D-prediction}
 \sum_{t=m}^{n-1}|\widehat f_t-f_t|^2
 =O\!\left(\frac{\mu_nH_n+(1+\mu_n)^{3/2}}m\right).
\end{equation}
\end{lemma}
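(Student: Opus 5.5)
We split the difference of the two predictions into an estimator error and a regressor error. Recall $\widehat f_t=\rho_t^Tx_t=S_t^TG_t^{-1}x_t$ by \eqref{eq:S}, and $f_t=(S_t^w)^T(G_t^w)^{-1}z_t$. Write
\[
 \widehat f_t-f_t
 =S_t^TG_t^{-1}(x_t-z_t)
 +(S_t-S_t^w)^TG_t^{-1}z_t
 +(S_t^w)^T\bigl(G_t^{-1}-(G_t^w)^{-1}\bigr)z_t,
\]
and bound the squared sum over $m\le t<n$ of each of the three pieces separately, using $(a+b+c)^2\le3(a^2+b^2+c^2)$.

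\textbf{First piece.} Since $G_t\asymp tI$ on the window,
\[
 |S_t^TG_t^{-1}(x_t-z_t)|^2\le U_t\,\norm{G_t^{-1}}\,\norm{x_t-z_t}^2=O\bigl(U_t\norm{x_t-z_t}^2/t\bigr).
\]
Now $U_t=\rho_t^TG_t\rho_t$. The decomposition $S_t=S_t^w+(S_t-S_t^w)$, together with $S_t^w=O(\sqrt{tH_n})$ from \eqref{eq:noise-estimates} and the bound \eqref{eq:S-difference} with exponent $\epsilon=1/4$, gives
\[
 U_t=O\bigl(H_n+(1+\mu_n)^{3/2}/t\bigr)=O\bigl(H_n+(1+\mu_n)^{3/2}/m\bigr).
\]
Note that \eqref{eq:S-difference} is a consequence of \eqref{eq:power-martingale} only; its use does not depend on $\calB$. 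Abel summation applied to \eqref{eq:noise-difference}, $\sum_{j<t}\norm{x_j-z_j}^2\le K_r\mu_t\le K_r\mu_n$, gives
\[
 \sum_{t=m}^{n-1}\frac{\norm{x_t-z_t}^2}{t}=O(\mu_n/m).
\]
So this piece contributes $O(\mu_nH_n/m)+O\bigl(\mu_n(1+\mu_n)^{3/2}/m^2\bigr)$. The second term is $o\bigl((1+\mu_n)^{3/2}/m\bigr)$ because $\mu_n=o(m)$.

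\textbf{Second piece.} By \eqref{eq:S-difference},
\[
 |(S_t-S_t^w)^TG_t^{-1}z_t|^2=O\bigl((1+\mu_n)^{3/2}\norm{z_t}^2/t^2\bigr).
\]
The Gram estimate in \eqref{eq:noise-estimates} and Abel summation give $\sum_{t\ge m}\norm{z_t}^2/t^2=O(1/m)$. Hence this piece contributes $O\bigl((1+\mu_n)^{3/2}/m\bigr)$.

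\textbf{Third piece.} This is the main obstacle. Write
\[
 G_t^{-1}-(G_t^w)^{-1}=G_t^{-1}(G_t^w-G_t)(G_t^w)^{-1},
\]
and $G_t-G_t^w=\sum_{j<t}\bigl[(x_j-z_j)x_j^T+z_j(x_j-z_j)^T\bigr]$. Cauchy--Schwarz with \eqref{eq:noise-difference} and $\sum_{j<t}\norm{x_j}^2+\norm{z_j}^2=O(t)$ gives
\[
 \norm{G_t-G_t^w}=O(\sqrt{t\mu_n}+\mu_n).
\]
Hence $\norm{G_t^{-1}-(G_t^w)^{-1}}=O(\sqrt{\mu_n}/t^{3/2})$, using $\mu_n=o(t)$. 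The squared term is then
\[
 O\bigl(tH_n\cdot\mu_n t^{-3}\norm{z_t}^2\bigr)=O\bigl(\mu_nH_n\norm{z_t}^2/t^2\bigr),
\]
which sums to $O(\mu_nH_n/m)$. The cross term $\sqrt{t\mu_n}$ is what forces the $\mu_nH_n/m$ rate. If the crude Cauchy--Schwarz bound on $G_t-G_t^w$ turns out too lossy, I would instead bound the piece as $(\rho^w_t)^T(G_t^w-G_t)G_t^{-1}z_t$ and use the Frobenius bound directly. Combining the three pieces yields \eqref{eq:D-prediction}. Localization and the null sets are handled as in Appendix~\ref{app:noise}.
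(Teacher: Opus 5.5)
Your proof is correct and uses the same ingredients as the paper: the score difference $S_t-S_t^w=O((1+\mu_n)^{3/4})$ from \eqref{eq:power-martingale}, the Gram difference $\norm{G_t-G_t^w}=O(\sqrt{t\mu_n}+\mu_n)$ from \eqref{eq:noise-difference} and Cauchy--Schwarz, and the identity $G_t^{-1}-(G_t^w)^{-1}=G_t^{-1}(G_t^w-G_t)(G_t^w)^{-1}$. The only difference is the order of the split. The paper multiplies the regressor error $x_t-z_t$ by the auxiliary estimate $(G_t^w)^{-1}S_t^w$ and the estimator error by $x_t$, whereas you multiply the regressor error by the actual $\rho_t$ and the estimator error by $z_t$; this costs you the extra bound on $U_t$, which produces an absorbed term $\mu_n(1+\mu_n)^{3/2}/m^2$. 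Your Cauchy--Schwarz bound on $G_t-G_t^w$ is exactly the one the paper uses, so the fallback you mention is not needed.
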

\begin{proof}
Uniformly for $m\le t<n$, \eqref{eq:noise-difference} gives
$\norm{G_t-G_t^w}=O(\sqrt{t\mu_n}+\mu_n)$, and \eqref{eq:power-martingale} with $\epsilon=1/4$ gives $S_t-S_t^w=O((1+\mu_n)^{3/4})$. In the identity
\[
 \rho_t-(G_t^w)^{-1}S_t^w
 =G_t^{-1}(S_t-S_t^w)+G_t^{-1}(G_t^w-G_t)(G_t^w)^{-1}S_t^w,
\]
use the assumed bound $\norm{G_t^{-1}}=O(1/t)$ and the noise bounds $\norm{(G_t^w)^{-1}}=O(1/t)$, $S_t^w=O(\sqrt{tH_n})$ in \eqref{eq:noise-estimates}. Since $\mu_n/t=o(1)$ uniformly,
\begin{equation}\label{eq:D-rho-difference}
 \norm{\rho_t-(G_t^w)^{-1}S_t^w}
 =O\!\left(\frac{\sqrt{\mu_nH_n}+(1+\mu_n)^{3/4}}t\right).
\end{equation}
Now decompose
\[
 \widehat f_t-f_t=(\rho_t-(G_t^w)^{-1}S_t^w)^Tx_t
           +((G_t^w)^{-1}S_t^w)^T(x_t-z_t).
\]
The first squared sum is bounded by \eqref{eq:D-rho-difference} and the bound
$\sum_{t=m}^{n-1}\norm{x_t}^2/t^2=O(1/m)$ from Abel summation. The second is
$O\!\left((H_n/m)\sum_{t=0}^{n-1}\norm{x_t-z_t}^2\right)=O(\mu_nH_n/m)$. This proves \eqref{eq:D-prediction}.
\end{proof}

\Needspace{10\baselineskip}
\begin{lemma}\label{lem:short-window}
With $m_n$ from \eqref{eq:mn}, almost surely along $n\in\calB$,
\begin{equation*}
 \mu_t\asymp(t/n)\mu_n\qquad\text{uniformly for }m_n\le t\le n.
\end{equation*}
Also,
\begin{equation*}
 \frac{n(J_n+H_n)}{m_n\mu_n}=o(1),
 \qquad \max_{m_n\le t<n}\frac{\chi_t^2J_t}{\mu_tH_n}=o(1).
\end{equation*}
\end{lemma}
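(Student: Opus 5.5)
The plan is to prove the two scalar limits in the second display first, since the window comparison for $\mu_t$ depends on the first of them. We work throughout along $n\in\calB$, with $n$ large enough that $m_n\ge n^\alpha$. Then Lemma~\ref{lem:endpoint-basic} applies uniformly on $[m_n,n]$ and gives
\[
 G_t/t=\Omega+o(1),\qquad U_t=O(H_n),\qquad \max_{0\le t<n}\norm{x_t}^2=O(n^\delta).
\]
From Lemma~\ref{lem:bad-information} we use $\mu_n\gtrsim \log n/H_n$ and $J_n=O(\sqrt{\mu_nH_n})$. Since $n/m_n\le(\log n)^{1/4}$, these give
\[
 \frac{nJ_n}{m_n\mu_n}=O\!\left((\log n)^{1/4}\sqrt{H_n/\mu_n}\right)=O\!\left((\log n)^{1/4}H_n/\sqrt{\log n}\right)=o(1),
\]
\[
 \frac{nH_n}{m_n\mu_n}=O\!\left((\log n)^{1/4}H_n^2/\log n\right)=o(1).
\]
This is the first limit.

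For the comparison $\mu_t\asymp(t/n)\mu_n$, I would use the Schur decomposition $\mu_t=J_t+k_t^TG_tk_t$. Identity~\eqref{eq:projection-variation} gives $\norm{k_t-k_n}_{G_t}^2\le J_n$, so by the triangle inequality in the $G_t$-norm,
\[
 \bigl|\norm{k_t}_{G_t}-\norm{k_n}_{G_t}\bigr|\le\sqrt{J_n}.
\]
Since $k_n^TG_nk_n=\mu_n-J_n\sim\mu_n$ by Lemma~\ref{lem:bad-information}, and $G_n\asymp nI$, we get $\norm{k_n}^2\asymp\mu_n/n$. Then $G_t\asymp tI$ gives $\norm{k_n}_{G_t}^2\asymp(t/n)\mu_n$ uniformly on the window. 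For $t\ge m_n$ one has $(t/n)\mu_n\ge(m_n/n)\mu_n$, so the first limit shows that $J_n$, and hence $J_t\le J_n$, is $o((t/n)\mu_n)$ uniformly. Both perturbations are therefore negligible, and $\mu_t\asymp(t/n)\mu_n$ follows with deterministic constants.

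For the maximal bound, I would estimate $\chi_t$ directly from its definition $\chi_t=(k_t-\rho_t)^Tx_t+y_{t+1}^*$. The ingredients are:
\[
 \norm{k_t}^2\le\mu_t/\lambda_{\min}(G_t)=O(\mu_t/t),\qquad \norm{\rho_t}^2\le U_t/\lambda_{\min}(G_t)=O(H_n/t),
\]
together with $\norm{x_t}^2=O(n^\delta)$ and boundedness of the reference. Combining them gives
\[
 \chi_t^2=O\!\left((\mu_t+H_n)n^\delta/m_n+1\right),\qquad m_n\le t<n.
\]
Multiplying by $J_t\le J_n$ and dividing by $\mu_tH_n$ leaves three terms:
\[
 \frac{J_nn^\delta}{H_nm_n},\qquad \frac{J_nn^\delta}{\mu_tm_n},\qquad \frac{J_n}{\mu_tH_n}.
\]
The first two vanish because $n^\delta/m_n$ decays polynomially, since $\delta<1$ and $m_n\ge n^{1-o(1)}$, while $J_n=O(\log n)$. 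For $\mu_t$ in the denominators, the comparison just proved gives $J_n/\mu_t=O(nJ_n/(m_n\mu_n))=o(1)$.

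The main obstacle is the lower half of the comparison at the left end $t\approx m_n$. The perturbation $\sqrt{J_n}$ must be small relative to $\sqrt{(m_n/n)\mu_n}$, which is exactly why the first limit has to be established beforehand. That limit in turn relies on the sharp ratio $J_n=O(\sqrt{\mu_nH_n})$ together with the lower bound $\mu_n\gtrsim\log n/H_n$, and on the loss factor $(\log n)^{1/4}$ being dominated by $\sqrt{\log n}/H_n$. The other delicate point is uniformity: every estimate from Lemma~\ref{lem:endpoint-basic} is uniform in $t$ over $[n^\alpha,n]\supset[m_n,n]$, so all constants are independent of $t$.
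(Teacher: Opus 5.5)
Your proposal is correct and follows the paper's proof essentially step for step. It uses the same three ingredients: the ratio bound $n(J_n+H_n)/(m_n\mu_n)=o(1)$ from Lemma~\ref{lem:bad-information}; the decomposition $\mu_t=J_t+\norm{k_t}_{G_t}^2$, with the perturbation $\norm{k_t-k_n}_{G_t}\le\sqrt{J_n}$ from \eqref{eq:projection-variation} made negligible against $\norm{k_n}_{G_t}^2\asymp(t/n)\mu_n$; and the direct bound on $\chi_t^2$ via $\norm{k_t}^2=O(\mu_t/t)$, $\norm{\rho_t}^2=O(H_n/t)$ and $\max_{t<n}\norm{x_t}^2=O(n^\delta)$. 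The only difference is cosmetic: you carry the $H_n$ contribution of $U_t$ as a separate term instead of absorbing it through $U_t=o(\mu_t)$, and this changes nothing.
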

\begin{proof}
Fix $2/\nu<\delta<\alpha<1$. Lemma~\ref{lem:bad-information} gives
\begin{equation}\label{eq:D-short-ratio}
 \frac{n(J_n+H_n)}{m_n\mu_n}
 =O\!\left(\frac{H_n}{(\log n)^{1/4}}\right)=o(1).
\end{equation}
Since $m_n\ge n^\alpha$ eventually, Lemma~\ref{lem:endpoint-basic} gives $G_t/t=\Omega+o(1)$ and $U_t=O(H_n)$ uniformly for $m_n\le t\le n$.

Since $G_t\asymp(t/n)G_n$ uniformly on $[m_n,n]$ and $k_n^TG_nk_n\sim\mu_n$, we have $k_n^TG_tk_n\asymp(t/n)\mu_n$. Equation~\eqref{eq:projection-variation} gives
$\norm{k_t-k_n}_{G_t}\le\sqrt{J_n}=o(\norm{k_n}_{G_t})$
uniformly, by \eqref{eq:D-short-ratio}. Thus
$\mu_t=J_t+\norm{k_t}_{G_t}^2\asymp(t/n)\mu_n$.
At $t=m_n$ this also gives $(J_n+H_n)/\mu_{m_n}=o(1)$.

Since $\norm{k_t-\rho_t}^2=O((\mu_t+U_t)/t)$ and $U_t=O(H_n)=o(\mu_t)$, we obtain
$\chi_t^2=O(\mu_t\norm{x_t}^2/t+\sup_{j\ge1}|y_j^*|^2)$.
Using \eqref{eq:D-max}, $J_n=O(\log n)$, $\delta<1$ and $(J_n+H_n)/\mu_{m_n}=o(1)$, we obtain
\[
 \max_{m_n\le t<n}\frac{\chi_t^2J_t}{\mu_tH_n}
 =O\!\left(\frac{J_nn^\delta}{H_nm_n}
 +\frac{J_n\sup_{j\ge1}|y_j^*|^2}{\mu_{m_n}H_n}\right)=o(1).\qedhere
\]
\end{proof}

\section{Information Bounds after Consistency}\label{app:bootstrap}
\begin{lemma}\label{lem:bootstrap}
Suppose the conclusions already obtained in Section~\ref{sec:identification} hold: $\thh_n\to\theta$, $r_n=O(n)$ and $R_n=o(n)$. Then almost surely $R_n=O(\log n)$ and
\[
 M_n\asymp\diag(\mu_n,nI_{d-1}),\qquad
 J_n\asymp\mu_n\asymp\lambda_{\min}(P_n^{-1})\asymp E_n+\log n.
\]
\end{lemma}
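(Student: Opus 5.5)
The argument runs in three stages. First we bound the transverse information from below. Then we turn the energy identity into $R_n=O(\log n)$. Finally we compare the scale information $J_n$, $\mu_n$ with $E_n+\log n$.

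\emph{Transverse information.} By consistency, $s_n\to1$ and $\ups_n\to0$. Since $R_n=o(n)$, \eqref{eq:noise-difference} alone is not enough, so we argue through the ideal regressor. The bound \eqref{eq:tracking-difference} gives $\norm{\Phi_n-\Phi_n^o}_F^2=O(1+R_n)=o(n)$. The lower bound \eqref{eq:ideal-Gram-lower} from the proof of Lemma~\ref{lem:uniform-regression}, together with \eqref{eq:uniform-info}, then gives
\[
 M_n\succeq c\,\diag(E_n,nI_{d-1})-O(1+R_n)I .
\]
In particular $G_n\succeq cnI$ eventually, and $G_n\preceq CnI$ because $r_n=O(n)$. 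Hence $G_n\asymp nI$.

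\emph{Logarithmic residual bound.} We apply Lemma~\ref{lem:general-energy}. With $G_n\asymp nI$ and $\mu_n\ge\mu_0$, the quantity $h_n=\ph_n^TP_n\ph_n$ is controlled through $M_n$ as
\[
 h_n\le \frac{(\pi_n+k_n^Tx_n)^2}{J_n}+\frac{x_n^TG_n^{-1}x_n}{1}\cdot O(1),
\]
and $x_n^TG_n^{-1}x_n=O(\norm{x_n}^2/n)=O(n^{\gamma-1})$ by \eqref{eq:x-max-mu}. The scale part $\Delta J_n/J_n$ need not be small. It is, however, summable in the $\nu_0$-sense, because $\sum\Delta J_n/J_{n+1}\le\log J_n$ and each term is bounded by $1$. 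This is the delicate point. If it fails, I would instead use Guo's weighted bound \eqref{eq:Guo-weighted}: since $\log(e+r_n)=O(\log n)$, it suffices to show that $\sup_n h_n<\infty$ eventually.

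\emph{Bounded denominators.} We need $1+h_j=O(1)$. The transverse part of $h_j$ is $o(1)$. The scale part is $(\pi_j+k_j^Tx_j)^2/J_{j+1}$ up to a factor, and it is at most $1$ by the Sherman–Morrison form. Indeed $h_j/(1+h_j)\le1$ always, so I would write $h_j=\Delta J_j/J_j+\lambda_j$ exactly. Then $h_j=O(1)$ follows once $\Delta J_j=O(J_j)$. For this, use $\Delta J_j\le(\pi_j+k_j^Tx_j)^2$ together with boundedness of $\pi_j$. The latter follows from $\pi_j=s_j^{-1}(y^*_{j+1}-\ups_j^Tx_j)$ once we know $\ups_j^Tx_j\to0$. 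That in turn follows from $\norm{\ups_j}^2=O(V_j/j)$ (by \eqref{eq:B4}) and $\norm{x_j}^2=O(j^\gamma)$, giving $O(j^{\gamma-1}\log j)\to0$. The term $k_j^Tx_j$ is handled the same way, using $\norm{k_j}^2\le\mu_j/\lambda_{\min}(G_j)=O(\mu_j/j)$ and $\mu_j=O(E_j+R_j)$. This yields a self-bounding inequality in $R_n$, absorbed since $R_n=o(n)$. With bounded denominators, \eqref{eq:Guo-weighted} gives $R_n=O(\log n)$.

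\emph{Scale information.} We need $J_n\asymp\mu_n\asymp\lambda_{\min}(P_n^{-1})\asymp E_n+\log n$.

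For the upper bound, $\mu_n\le\mu_0+2E_n+2R_n=O(E_n+\log n)$.

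For the lower bound on $J_n$, the displayed inequality on $M_n$, evaluated at $(a,b)=(1,k_n)$ after the $R_n$ term is absorbed, gives only $J_n\gtrsim E_n-O(\log n)$. This is not enough when $E_n$ is small. The $\log n$ part therefore comes from noise information, and this is the main obstacle. I would reuse the argument of Lemma~\ref{lem:bad-information} without the gain-error hypothesis. Since $s_t\to1$, \eqref{eq:schur-increment} gives $\chi_t^2\asymp\Delta J_t$. Combined with the lower bound $\sum_{t\ge n^\alpha}\chi_t^2\ge c_\alpha\log n-O(\mu_n)$ obtained from Lemma~\ref{lem:aux} and Lemma~\ref{lem:prediction-comparison}, this yields $J_n\ge c\log n$ whenever $\mu_n\le\log n$. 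In the other case, $\mu_n\ge\log n$, the bound $J_n\ge c\mu_n$ follows from $\mu_n-J_n=k_n^TG_nk_n$, with the right-hand side controlled through $\norm{k_n}$ and \eqref{eq:Gram-unweighted}. Together these give $J_n\gtrsim E_n+\log n$.

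Finally, $\lambda_{\min}(P_n^{-1})\asymp\min(J_n,n)$ follows from the block structure of $M_n$ and the nonsingularity of $L_\theta$.
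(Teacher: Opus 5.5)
There are genuine gaps in Stages 2 and 3. The Stage 2 gap can be repaired with ingredients you already have. The Stage 3 gap omits the central argument of the lemma.

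\textbf{Stage 2 (residual bound) does not close as written.} First, \eqref{eq:x-max-mu} gives only $\max_{j<n}\norm{x_j}^2=O(n^\gamma+\mu_n)$. When $E_n\asymp n$ one has $\mu_n\asymp n$, so this yields $\lambda_j=O(1)$ but neither $\norm{x_j}^2=O(j^\gamma)$ nor $\lambda_j=o(1)$. Likewise, \eqref{eq:B4} bounds $n\norm{\ups_n}^2$ by $K_0\{V_n+(1+R_n)\norm{\thh_n-\theta}^2\}$, not by $O(V_n)$. Your derivation of bounded $\pi_j$ from $\ups_j^Tx_j\to0$ is therefore circular. The pointwise bound on $x_j$ is exactly what bounded $\pi_j$ would give through the stable filter $\mathcal T$. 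The paper breaks this loop by a separate absorption step. In \eqref{eq:tracking-representation} it writes $\varepsilon_j=(\theta-\thh_j)^T\ph_j$ and uses $\thh_j\to\theta$ to absorb the filtered residual, which gives $\max_{j<n}\norm{\ph_j}^2=O(n^\gamma)$.

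Second, even with that bound, $h_j=\lambda_j+(\pi_j+k_j^Tx_j)^2/J_j$ is bounded only if $(k_j^Tx_j)^2=O(J_j)$. Your estimate $\norm{k_j}^2=O(\mu_j/j)$ with $\mu_j=O(E_j+R_j)$ leaves a term of order $E_jj^{\gamma-1}/J_j$. This can be as large as $j^\gamma$ when $E_j\asymp j$, because at this point you have no lower bound on $J_j$ beyond $J_0$. No self-bounding in $R_n$ removes the $E_j$ part. What is missing is \eqref{eq:uniform-info} at $(a,b)=(1,k_j)$, which gives $\mu_j=O(J_j+1+R_j)$ (the paper's \eqref{eq:rough-scale-info}). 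With it, $h_j=O(1+R_jj^{\gamma-1})$. Then \eqref{eq:Guo-weighted} gives $R_n\le O(\log n)\{1+\max_{j<n}R_jj^{\gamma-1}\}$, which does absorb. So your ordering, residual bound first, is salvageable. The paper instead proves $\log n=O(\mu_n)$ and $J_n\asymp\mu_n$ first, using only $R_n=o(n)$, and then reads off $h_n=O(\norm{x_n}^2/n+1/\mu_n)=o(1)$.

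\textbf{Stage 3 (scale information) has the more serious gap.} The auxiliary-noise bound together with $\sum_t\chi_t^2=O(J_n)$ gives $CJ_n+12\mu_n\ge(c_\alpha/2)\log n-o(1)$. Since $J_n\le\mu_n$, this proves only $\log n=O(\mu_n)$. It does not give $J_n\ge c\log n$ when $\mu_n\le\log n$.

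The comparison $J_n\gtrsim\mu_n$ is a separate step and the core of the lemma. Your one line about $\norm{k_n}$ and \eqref{eq:Gram-unweighted} supplies only its lower half. The paper uses \eqref{eq:chi-decomposition} on $[\lfloor n/2\rfloor,n)$ as follows:
\begin{enumerate}
\item \eqref{eq:Gram-unweighted} with $v=k_n$ bounds the squared sum of $k_n^Tz_t+y_{t+1}^*$ below by a multiple of $\mu_n-J_n$.
\item Bounded $s_t$ and \eqref{eq:schur-increment} give $\sum_t\chi_t^2=O(J_n)$.
\item \eqref{eq:projection-variation} bounds the second term by $O(J_n)+o(\mu_n)$. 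This requires the refined transverse score estimate $\sup_tU_t=O(H_n+(1+\mu_n)^{3/2}/n)=o(\mu_n)$, which is built on $\log n=O(\mu_n)$. The crude bound $U_t\le V_t=O(\log n)$ is of the same order as $\mu_n$ in the critical regime and does not suffice.
\item \eqref{eq:noise-difference} bounds the third term by $O(\mu_n^2/n)$, which the paper absorbs via \eqref{eq:rough-scale-info}.
\end{enumerate}

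Finally, $M_n\asymp\diag(\mu_n,nI_{d-1})$ needs $\norm{k_n}^2=O(\mu_n/n)$ together with $J_n\asymp\mu_n$. Your remark on $\lambda_{\min}(P_n^{-1})$ does not establish this full matrix comparison; the paper uses trace bounds for the normalized matrix and its inverse.
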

\begin{proof}
First set $a=0$ in \eqref{eq:uniform-info} and absorb $R_n=o(n)$. Together with $r_n=O(n)$, this gives $G_n\asymp nI$. Fix $2/\nu<\gamma<1$. For the pointwise bound, $\max_{0\le j<n}\norm{\ph_j^o}=O(n^{\gamma/2})$. Let $L_\infty$ be the sum of the norms of the impulse coefficients of $\mathcal T$, and choose a finite $n_0$ with
$L_\infty\sup_{j\ge n_0}\norm{\thh_j-\theta}\le1/2$.
In \eqref{eq:tracking-representation}, $\varepsilon_j=(\theta-\thh_j)^T\ph_j$. The finite initial segment and the transient are bounded after filtering. Thus
\[
 \max_{n_0\le j<n}\norm{\ph_j}
 =O(1+n^{\gamma/2})+\tfrac12\max_{n_0\le j<n}\norm{\ph_j}.
\]
Absorption and $G_n\asymp nI$ give
\begin{equation}\label{eq:pointwise-regressor}
 \max_{0\le j<n}\norm{\ph_j}^2=O(n^\gamma),\qquad
 \lambda_n=O(n^{\gamma-1})=o(1).
\end{equation}

The general LS bound \eqref{eq:Guo-V} also gives $V_n=O(\log n)$, and $\mu_n=O(n)$ follows from $r_n=O(n)$.
We next prove $\log n=O(\mu_n)$. It remains to consider indices $n$ with $\mu_n\le\log n$. Put $m\triangleq\lceil\sqrt n\rceil$. Then $\mu_n=o(m)$, and $G_t\asymp tI$ uniformly for $m\le t\le n$. Lemma~\ref{lem:prediction-comparison} gives
\[
 \sum_{t=m}^{n-1}|\widehat f_t-f_t|^2=o(1).
\]
The identity
\[
 y_{t+1}^*-f_t=\pi_t+(s_t-1)(\pi_t+k_t^Tx_t)+(\widehat f_t-f_t)
\]
and bounded $s_t$ give $\sum_{t=m}^{n-1}(y_{t+1}^*-f_t)^2=O(\mu_n)+o(1)$, since $\sum_{t=m}^{n-1}\pi_t^2\le\mu_n$ and \eqref{eq:schur-increment} with $\lambda_t\le1$ yields $\sum_{t=m}^{n-1}(\pi_t+k_t^Tx_t)^2\le2J_n\le2\mu_n$. Lemma~\ref{lem:aux} with $\alpha=1/2$ gives a positive multiple of $\log n$ as a lower bound. Thus
\begin{equation}\label{eq:mu-log-lower}
 \log n=O(\mu_n).
\end{equation}
This bound holds on the full sequence.

To compare $J_n$ with $\mu_n$, set $(a,b)=(1,k_n)$ in \eqref{eq:uniform-info}. Its quadratic form is $J_n$, and $R_n=o(n)$ gives
\[
 n\norm{k_n}^2+E_n
 =O(J_n+1+R_n)+o(n)\norm{k_n}^2.
\]
Absorbing the last term gives $n\norm{k_n}^2+E_n=O(J_n+1+R_n)$. Since $G_n\asymp nI$, we obtain
\begin{equation}\label{eq:rough-scale-info}
 \mu_n=J_n+k_n^TG_nk_n=O(J_n+1+R_n).
\end{equation}
Next write $S_t=S_t^w+\sum_{j=0}^{t-1}(x_j-z_j)w_{j+1}$. Equations~\eqref{eq:noise-difference}, \eqref{eq:power-martingale} with $\epsilon=1/4$, and \eqref{eq:noise-estimates} imply, uniformly for $\lfloor n/2\rfloor\le t\le n$,
\begin{equation*}
 U_t=S_t^TG_t^{-1}S_t
 =O\!\left(H_n+\frac{(1+\mu_n)^{3/2}}n\right)=o(\mu_n).
\end{equation*}
Indeed, \eqref{eq:mu-log-lower} controls the first term and $\mu_n=O(n)$ controls the second after division by $\mu_n$.

On $I_n\triangleq[\lfloor n/2\rfloor,n)$, Lemma~\ref{lem:reference-Gram} and $G_n\asymp nI$ bound the squared sum of the first term in \eqref{eq:chi-decomposition} below by a positive multiple of $\mu_n-J_n$. Since $G_t\asymp nI$ and $\sup_{t\in I_n}U_t=o(\mu_n)$, \eqref{eq:projection-variation} bounds the squared sum of the second term by $O(J_n)+o(\mu_n)$, while \eqref{eq:noise-difference} bounds that of the third by $O(\mu_n^2/n)$. Bounded $s_t$ and \eqref{eq:schur-increment} also give $\sum_{t\in I_n}\chi_t^2=O(J_n)$. Combining the lower and upper bounds yields
\[
 \mu_n=O(J_n+\mu_n^2/n)+o(\mu_n).
\]
By \eqref{eq:rough-scale-info}, $\mu_n=O(n)$ and $R_n=o(n)$,
\[
 \frac{\mu_n^2}{n}
 =O\!\left(\frac{\mu_n}{n}J_n\right)
  +O\!\left(\frac{1+R_n}{n}\mu_n\right)
 =O(J_n)+o(\mu_n).
\]
Absorbing $o(\mu_n)$ gives $\mu_n=O(J_n)$. Together with $J_n\le\mu_n$, this proves $J_n\asymp\mu_n$.

We now sharpen the residual estimate. Since $U_n\le V_n=O(\log n)=O(\mu_n)$,
\[
 \norm{k_n}^2+\norm{\rho_n}^2=O(\mu_n/n),\qquad
 \chi_n^2=O(\mu_n\norm{x_n}^2/n+1).
\]
Block inversion and \eqref{eq:schur-increment} give, eventually,
\[
 \ph_n^TP_n\ph_n
 =\lambda_n+\frac{\chi_n^2}{s_n^2J_n}
 =O\!\left(\frac{\norm{x_n}^2}{n}+\frac1{\mu_n}\right)
 =O(n^{\gamma-1})+O(1/\log n)=o(1).
\]
The denominators in \eqref{eq:Guo-weighted} are now bounded, including the finite initial segment, so $R_n=O(\log n)$. The inequalities
$\mu_n\le\mu_0+2E_n+2R_n$ and $E_n\le2(\mu_n-\mu_0)+2R_n$, together with \eqref{eq:mu-log-lower}, give $\mu_n\asymp E_n+\log n$.

Normalize $M_n$ on both sides by $\diag(\mu_n,nI_{d-1})^{-1/2}$. The resulting positive definite matrix has trace $1+n^{-1}\tr G_n=O(1)$. By block inversion, the trace of its inverse is
\[
 n\tr(G_n^{-1})+\frac{\mu_n+n\norm{k_n}^2}{J_n}=O(1),
\]
using $G_n\asymp nI$, $J_n\asymp\mu_n$ and $\norm{k_n}^2=O(\mu_n/n)$. These two trace bounds keep the eigenvalues of the normalized matrix bounded above and away from zero, which proves $M_n\asymp\diag(\mu_n,nI_{d-1})$. Since $\mu_n=O(n)$, this gives $\lambda_{\min}(M_n)\asymp\mu_n$. The relation $M_n=L_\theta^TP_n^{-1}L_\theta$, with $L_\theta$ fixed and nonsingular, gives the same comparison for $P_n^{-1}$.
\end{proof}

\section{Refined Closed-Loop Estimates}\label{app:refined}
The first three lemmas use the closed-loop consistency and information estimates established in Section~\ref{sec:identification} and Appendix~\ref{app:bootstrap}. Lemma~\ref{lem:score-limit} also uses the refined information law in Lemma~\ref{lem:information-asymptotics}.

\subsection{Prediction and energy estimates}
For $n\ge0$, define $d_n\triangleq Q^T(\ph_n^o-\ph_n^w)$ and $\delta_n\triangleq x_n-z_n-d_n$. The reference component $d_n$ is deterministic and bounded by stability of the reference filter.
\begin{lemma}\label{lem:bounded-predictor}
Almost surely,
\begin{equation}\label{eq:post-filter}
 \varepsilon_n=o(1),\qquad
 \sum_{j=0}^{n-1}\norm{\delta_j}^2=O(\log n),\qquad\delta_n=o(1).
\end{equation}
\end{lemma}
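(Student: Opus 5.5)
We aim to prove the three claims in the order: $\varepsilon_n=o(1)$, then $\sum_{j<n}\norm{\delta_j}^2=O(\log n)$, then $\delta_n=o(1)$. Throughout we take as given the conclusions of Section~\ref{sec:identification} and Lemma~\ref{lem:bootstrap}: $\thh_n\to\theta$, $R_n=O(\log n)$, $G_n\asymp nI$, $J_n\asymp\mu_n\asymp E_n+\log n$, and the pointwise bound \eqref{eq:pointwise-regressor}.

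\textbf{Step 1: $\varepsilon_n=o(1)$.} Start from $\varepsilon_n=(\theta-\thh_n)^T\ph_n$ and the representation \eqref{eq:tracking-representation}, $\ph_n=\ph_n^o+(\mathcal T\varepsilon)_n+\mathbf h_n$. This gives
\[
 |\varepsilon_n|\le\norm{\thh_n-\theta}\bigl(\norm{\ph_n^o}+\norm{\mathbf h_n}+L_\infty\max_{j\le n}|\varepsilon_j|\bigr),
\]
but this is not good enough, since $\norm{\ph_n^o}$ grows like $n^{\gamma/2}$ through the noise. The remedy is to use the coordinates instead. Write $\thh_n-\theta=(s_n-1)\theta+Q\ups_n$ and recall $\theta^T\ph_n=\pi_n$. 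Then
\[
 \varepsilon_n=-(s_n-1)\pi_n-\ups_n^Tx_n .
\]
\begin{itemize}
\item For the second term, \eqref{eq:B4} with $V_n=O(\log n)$ and $R_n=O(\log n)$ gives $\norm{\ups_n}^2=O(\log n/n)$. Combined with $\norm{x_n}^2=O(n^\gamma)$, this yields $\ups_n^Tx_n=O(\sqrt{n^{\gamma-1}\log n})=o(1)$.
\item For the first term, $\pi_n=y_{n+1}^*+\varepsilon_n$, so $(s_n-1)\pi_n=(s_n-1)y^*_{n+1}+(s_n-1)\varepsilon_n$.
\end{itemize}
Since $s_n\to1$, we get $(1+o(1))\varepsilon_n=o(1)$, which proves $\varepsilon_n=o(1)$. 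Boundedness of $\varepsilon_n$ also follows, as claimed in the proof of Theorem~\ref{thm:cost}.

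\textbf{Step 2: energy bound.} Decompose
\[
 \delta_n=x_n-z_n-d_n=Q^T(\ph_n-\ph_n^o)=Q^T\bigl((\mathcal T\varepsilon)_n+\mathbf h_n\bigr).
\]
Then \eqref{eq:tracking-difference} gives $\sum_{j<n}\norm{\delta_j}^2\le K_r(1+R_n)=O(\log n)$, using $R_n=O(\log n)$ from Lemma~\ref{lem:bootstrap}. Here we use that $\ph_n^o-\ph_n^w$ has output components built from $y^*$ and that $Q^T$ has norm one.

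\textbf{Step 3: $\delta_n=o(1)$.} This is the main obstacle, and we expect it to reduce to the stable-filter structure. Write $(\mathcal T\varepsilon)_n=\sum_{\ell\ge0}T_\ell\varepsilon_{n-\ell}$ with $\sum_\ell\norm{T_\ell}=L_\infty<\infty$; the coefficients decay geometrically by minimum phase. Then:
\begin{itemize}
\item $\mathbf h_n\to0$ exponentially.
\item For any fixed $N$, $\norm{(\mathcal T\varepsilon)_n}\le L_\infty\max_{n-N<j\le n}|\varepsilon_j|+\sum_{\ell\ge N}\norm{T_\ell}\sup_j|\varepsilon_j|$.
\end{itemize}
Because $\varepsilon_j\to0$ and $\sup_j|\varepsilon_j|<\infty$ (the finite initial segment is bounded), the first term vanishes as $n\to\infty$, and the second is then made small by taking $N$ large. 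Hence $\delta_n=o(1)$.

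The only point to verify carefully is Step 1: the identity $\varepsilon_n=-(s_n-1)\pi_n-\ups_n^Tx_n$ must be checked against the sign convention $\varepsilon_n=\pi_n-y^*_{n+1}$ together with the CE equation \eqref{eq:scale-ce}. Indeed, \eqref{eq:scale-ce} gives $\pi_n-y^*_{n+1}=-(s_n-1)\pi_n-\ups_n^Tx_n$ directly, which confirms it.
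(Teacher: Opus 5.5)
Your proposal is correct and follows the paper's proof essentially step for step. You obtain $\norm{\ups_n}^2=O(\log n/n)$ from \eqref{eq:B4} with $V_n,R_n=O(\log n)$ and consistency, and combine it with \eqref{eq:pointwise-regressor} and the control identity \eqref{eq:scale-ce} to get $\varepsilon_n=o(1)$; then $\delta_n=Q^T(\ph_n-\ph_n^o)$, \eqref{eq:tracking-difference} and stability of $\mathcal T$ give the remaining two claims.
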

\begin{proof}
Lemma~\ref{lem:uniform-regression}, consistency and \eqref{eq:rate-start} give $\norm{\ups_n}^2=O(\log n/n)$. Equation~\eqref{eq:pointwise-regressor} also gives $\norm{x_n}^2=O(n^\gamma)$ for fixed $2/\nu<\gamma<1$. Hence $\ups_n^Tx_n=o(1)$. The control identity~\eqref{eq:scale-ce}, $s_n\to1$ and boundedness of the reference imply $\varepsilon_n=o(1)$.

Equation~\eqref{eq:tracking-difference} and $R_n=O(\log n)$ give $\sum_{j=0}^{n-1}\norm{\delta_j}^2=O(\log n)$. Stability of the filter and $\varepsilon_n=o(1)$ give $\delta_n=o(1)$.
\end{proof}

\begin{lemma}\label{lem:refined-estimates}
Almost surely, the scores satisfy
\begin{equation}\label{eq:T-LIL-bound}
 S_n=O(\sqrt{nH_n}),\qquad T_n^2=O(\mu_n\ellfun(\mu_n)),
\end{equation}
and the estimation errors satisfy
\[
 (s_n-1)^2=O(H_n/\mu_n),\qquad
 \norm{\ups_n}^2=O(H_n/n),\qquad V_n=O(H_n).
\]
\end{lemma}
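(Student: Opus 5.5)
The plan is to treat the two scores separately and then read off the error bounds from the normal equations \eqref{eq:S}--\eqref{eq:T} together with the information comparisons in \eqref{eq:rate-start}.

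\emph{Transverse score.} For $S_n$ we write
\[
S_n=S_0+S_n^w+\sum_{j=0}^{n-1}(x_j-z_j)w_{j+1}.
\]
Lemma~\ref{lem:noise-estimates} gives $S_n^w=O(\sqrt{nH_n})$. The difference $x_j-z_j=d_j+\delta_j$ consists of two parts.
\begin{enumerate}
\item The reference part $d_j$ is deterministic and bounded, so Lemma~\ref{lem:bounded-LIL}, applied componentwise with $B_n=O(n)$, gives $\sum_{j<n} d_jw_{j+1}=O(\sqrt{nH_n})$.
\item The tracking part $\delta_j$ is predictable and bounded by Lemma~\ref{lem:bounded-predictor}, with $\sum_{j<n}\norm{\delta_j}^2=O(\log n)$. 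Lemma~\ref{lem:bounded-LIL} then gives $O(\sqrt{\log n\,H_n})$, which is negligible.
\end{enumerate}
Hence $S_n=O(\sqrt{nH_n})$.

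\emph{Scale score.} For $T_n$, note that $\pi_j=\varepsilon_j+y_{j+1}^*$ is bounded by Lemma~\ref{lem:bounded-predictor} and the boundedness of the reference. Since $\mu_n\to\infty$ by \eqref{eq:rate-start}, \eqref{eq:bounded-martingale} with $\psi_j=\pi_j$ gives $T_n-T_0=O(\sqrt{\mu_n\ellfun(\mu_n)})$, and this yields \eqref{eq:T-LIL-bound}.

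\emph{Error bounds.} Using $G_n\asymp nI$ from \eqref{eq:rate-start}, \eqref{eq:S} gives $U_n=S_n^TG_n^{-1}S_n=O(H_n)$ and $\norm{\rho_n}^2=O(H_n/n)$. For the scale error, \eqref{eq:T} gives
\[
J_n(s_n-1)=T_n+k_n^TS_n .
\]
Cauchy--Schwarz in the $G_n$ norm gives
\[
|k_n^TS_n|^2\le (k_n^TG_nk_n)\,U_n\le\mu_nU_n=O(\mu_nH_n).
\]
Combining this with $J_n\asymp\mu_n$ and $\ellfun(\mu_n)=O(H_n)$ (because $\mu_n=O(n)$), we get
\[
(s_n-1)^2=O\bigl((\mu_n\ellfun(\mu_n)+\mu_nH_n)/\mu_n^2\bigr)=O(H_n/\mu_n).
\]
Then \eqref{eq:V} gives
\[
V_n=J_n(s_n-1)^2+U_n=O(H_n).
\]
Finally $\ups_n=\rho_n+k_n(s_n-1)$. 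We have $\norm{k_n}^2\le k_n^TG_nk_n/\lambda_{\min}(G_n)=O(\mu_n/n)$, so
\[
\norm{k_n(s_n-1)}^2=O\bigl((\mu_n/n)(H_n/\mu_n)\bigr)=O(H_n/n).
\]
Together with $\norm{\rho_n}^2=O(H_n/n)$, this gives $\norm{\ups_n}^2=O(H_n/n)$.

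\emph{Main obstacle.} I expect the main difficulty to be the transverse score estimate, specifically obtaining $H_n=\ellfun(n)$ rather than $\log n$. The cross terms must be bounded by LIL-type rather than crude power bounds. This requires that both $d_j$ and $\delta_j$ be bounded, so that Lemma~\ref{lem:bounded-LIL} applies. For $\delta_j$ this boundedness comes from $\varepsilon_n=o(1)$ in Lemma~\ref{lem:bounded-predictor}. If that fails, the fallback is to use \eqref{eq:power-martingale} with $B_n=O(\log n)$, which still gives $O((\log n)^{1/2+\epsilon})=o(\sqrt{nH_n})$.
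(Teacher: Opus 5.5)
Your proposal is correct and follows the paper's proof essentially step for step. It uses the same score decomposition, Lemma~\ref{lem:bounded-LIL} for both $S_n$ and $T_n$, $G_n\asymp nI$ and $J_n\asymp\mu_n$ from \eqref{eq:rate-start}, the bound $|k_n^TS_n|^2\le\mu_nU_n$, and $\ups_n=\rho_n+k_n(s_n-1)$.

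The differences are cosmetic. The paper applies Lemma~\ref{lem:bounded-LIL} once to the combined bounded coefficient $d_j+\delta_j$ rather than splitting it. Also, your decomposition should read $S_n=S_n^w+\sum_{j<n}(x_j-z_j)w_{j+1}$, since $S_n^w$ already contains $S_0$; the extra $S_0$ is harmless for the $O$-bound.
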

\begin{proof}
By Lemma~\ref{lem:bounded-predictor}, the coefficients $d_j+\delta_j$ are bounded and $\F_j$-measurable. Lemmas~\ref{lem:noise-estimates} and~\ref{lem:bounded-LIL}, applied componentwise, give
$S_n=S_n^w+\sum_{j=0}^{n-1}(d_j+\delta_j)w_{j+1}=O(\sqrt{nH_n})$.
Also $\pi_n=\varepsilon_n+y_{n+1}^*$ is bounded, so Lemma~\ref{lem:bounded-LIL} gives the bound for $T_n$ in \eqref{eq:T-LIL-bound}.

Since $G_n\asymp nI$, the bound for $S_n$ gives $U_n=O(H_n)$.
Using $|k_n^TS_n|^2\le(k_n^TG_nk_n)U_n\le\mu_nU_n$ and $\mu_n=O(n)$ in \eqref{eq:T} yields $(s_n-1)^2=O(H_n/\mu_n)$. Finally,
$\norm{\rho_n}^2=O(H_n/n)$ and $\norm{k_n}^2=O(\mu_n/n)$ imply
$\norm{\ups_n}^2\le2\norm{\rho_n}^2+2\norm{k_n}^2(s_n-1)^2=O(H_n/n)$.
Equation~\eqref{eq:V} gives $V_n=O(H_n)$.
\end{proof}

\begin{lemma}\label{lem:energy}
The expression
\begin{equation}\label{eq:energy-limit}
 R_n+2\sum_{t=0}^{n-1}\varepsilon_tw_{t+1}+V_n-\sigma^2\ell_n
\end{equation}
converges almost surely to a finite random variable.
\end{lemma}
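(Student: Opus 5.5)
The plan is to apply Lemma~\ref{lem:general-energy} directly to the actual closed-loop recursion. Its algebraic identity \eqref{eq:energy-step} needs no CE structure, and its conclusion is exactly the almost sure convergence of \eqref{eq:energy-limit}. So the proof reduces to checking the two summability conditions \eqref{eq:general-energy-conditions} for $h_n=\ph_n^TP_n\ph_n$ and $V_n$, for some $1<\nu_0<\min(\nu/2,2)$.

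First we bound $h_n$ pointwise. Block inversion of $M_n$ and \eqref{eq:schur-increment} give, eventually,
\[
 h_n=\lambda_n+\frac{\chi_n^2}{s_n^2J_n}.
\]
This is the same formula used at the end of the proof of Lemma~\ref{lem:bootstrap}.

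We then sharpen each piece using the refined estimates.
\begin{enumerate}
\item Since $G_n\asymp nI$, we have $\lambda_n=O(\norm{x_n}^2/n)$.
\item By Lemma~\ref{lem:refined-estimates}, $\norm{\rho_n}^2+\norm{k_n}^2(s_n-1)^2$ is small. In particular $\norm{k_n}^2=O(\mu_n/n)$ and $\norm{\rho_n}^2=O(H_n/n)$. Boundedness of the reference then gives $\chi_n^2=O(1+\mu_n\norm{x_n}^2/n)$.
\item With $s_n\to1$ and $J_n\asymp\mu_n\gtrsim\log n$ from \eqref{eq:rate-start}, this yields
\[
 h_n=O\!\left(\frac{\norm{x_n}^2}{n}+\frac1{\mu_n}\right).
\]
\end{enumerate}

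The first term is harmless. Since $x_n=z_n+d_n+\delta_n$ with $d_n,\delta_n$ bounded (Lemma~\ref{lem:bounded-predictor}), we have $\norm{x_n}^{2\nu_0}\le C(1+\norm{z_n}^{2\nu_0})$. The Tonelli argument from the proof of Lemma~\ref{lem:aux} gives $\sum_n\norm{z_n}^{2\nu_0}/n^{\nu_0}<\infty$, which makes $\sum(\norm{x_n}^2/n)^{\nu_0}$ finite.

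The main obstacle is the second term $1/\mu_n$, which need not be summable: under zero reference $\mu_n\asymp\log n$. The crude bound $\chi_n^2=O(1)$ is therefore too weak, and $\chi_n$ must be shown small on average rather than pointwise. The tool I would try first is $\chi_n^2=s_n^2(1+\lambda_n)\Delta J_n$ from \eqref{eq:schur-increment}. It gives
\[
 \frac{\chi_n^2}{s_n^2J_n}\le\frac{(1+\lambda_n)\Delta J_n}{J_n}.
\]
Writing $a_n=\Delta J_n/J_n$, we have $a_n=O(1/\log n)\to0$ eventually. Then
\[
 \sum a_n^{\nu_0}\le\sup_{t\ge n}a_t^{\nu_0-1}\sum_t a_t\approx\int\frac{dJ}{J^{\nu_0}}<\infty,
\]
because $\sum_t \Delta J_t/J_{t+1}^{\nu_0}$ converges by the usual integral comparison with $\nu_0>1$ (and $J_{t+1}\asymp J_t$ since $a_t\to0$). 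Combining with $(a+b)^{\nu_0}\le 2^{\nu_0}(a^{\nu_0}+b^{\nu_0})$ gives $\sum h_n^{\nu_0}<\infty$. The finite initial segment before "eventually" adds finitely many terms.

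For the second condition, $V_n=O(H_n)$ by Lemma~\ref{lem:refined-estimates}. Hence $h_n^2V_n\le h_n^{\nu_0}\cdot h_n^{2-\nu_0}V_n$, and it suffices that $h_n^{2-\nu_0}H_n$ stay bounded.

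Here the pointwise bound is too weak. The $\lambda_n$-part is $O(n^{(\gamma-1)(2-\nu_0)}H_n)\to0$ by \eqref{eq:pointwise-regressor}, which is fine. The $\Delta J_n/J_n$-part, however, is only $O(1/\log n)$ pointwise, so its power times $H_n$ does tend to zero, since $(\log n)^{-(2-\nu_0)}\log\log n\to0$. This makes $h_n^{2-\nu_0}V_n=o(1)$, and therefore $\sum h_n^2V_n\le C\sum h_n^{\nu_0}<\infty$.

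Both hypotheses of Lemma~\ref{lem:general-energy} then hold almost surely, and its conclusion is exactly the convergence of \eqref{eq:energy-limit}. The localization needed to give the martingale pieces deterministic conditional moments is handled by Appendix~\ref{app:noise}, and random references by conditioning.
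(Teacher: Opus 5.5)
Your framework is the paper's. You apply Lemma~\ref{lem:general-energy}, split $h_n=\lambda_n+(\pi_n+k_n^Tx_n)^2/J_n$ by block inversion, control $\lambda_n$ through the Tonelli moment bound for $z_n$, and verify the second condition from $h_n=O(1/\log n)$ and $V_n=O(H_n)$. That last verification is correct and coincides with the paper's.

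The gap is in your proof that $\sum_n(\Delta J_n/J_n)^{\nu_0}<\infty$. The inequality $\sum a_n^{\nu_0}\le\sup_t a_t^{\nu_0-1}\sum_t a_t$ gives nothing. The sum $\sum_{t<n}a_t=\sum_{t<n}\Delta J_t/J_t$ grows like $\log J_n\to\infty$; it is comparable to $\int dJ/J$, not to $\int dJ/J^{\nu_0}$. The integral comparison controls only $\sum_t\Delta J_t/J_{t+1}^{\nu_0}$. Since $a_t^{\nu_0}=(\Delta J_t)^{\nu_0-1}\cdot\Delta J_t/J_t^{\nu_0}$, it is dominated by a multiple of that sum only when $\sup_t\Delta J_t<\infty$. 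That fails in general. We have $\Delta J_t=(\pi_t+k_t^Tx_t)^2/(1+\lambda_t)$ with $\lambda_t\to0$. For a nonzero constant reference, $k_t$ generally converges to a nonzero limit, so $\Delta J_t$ is unbounded whenever $k_t^Tz_t$ is, e.g.\ for Gaussian noise. The two facts you do have, $a_t=O(1/\log t)$ and $\sum_{t<n}a_t=O(\log n)$, cannot close the gap on their own. Sparse spikes of size $1/\log t$ are consistent with both and still give $\sum_t a_t^{\nu_0}=\infty$ for $\nu_0<2$. Some use of the moment structure of $x_t$ is unavoidable here.

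The repair is the paper's decomposition of the scale term. Using $\norm{k_n}^2=O(\mu_n/n)$ and $J_n\asymp\mu_n$,
$(\pi_n+k_n^Tx_n)^2/J_n\le2\pi_n^2/J_n+2\norm{k_n}^2\norm{x_n}^2/J_n=O(\pi_n^2/\mu_n+\norm{x_n}^2/n)$.
The $\norm{x_n}^2/n$ part is absorbed by the Tonelli argument you already give for $\lambda_n$. For the other part, $\pi_n$ is bounded and $\mu_{n+1}-\mu_n=\pi_n^2$. So $\mu$ has bounded increments, and integration against $d\mu$ gives $\sum_n(\pi_n^2/\mu_n)^{\nu_0}<\infty$.

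Your step (2) lost exactly this structure by replacing $(y_{n+1}^*)^2$ in $\chi_n^2$ with a constant, which is what produced the non-summable $1/\mu_n$. If you keep it, you can use $(y_{n+1}^*)^2=E_{n+1}-E_n$, which is bounded, together with $\mu_n\gtrsim1+E_n$. Then $(y_{n+1}^*)^2/\mu_n$ is summable in the $\nu_0$th power by the same integral comparison, and the detour through $\Delta J_n/J_n$ becomes unnecessary.
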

\begin{proof}
We verify \eqref{eq:general-energy-conditions} for the actual recursion. Block inversion and \eqref{eq:rate-start} give, with $h_n\triangleq\ph_n^TP_n\ph_n$,
\begin{equation*}
 h_n=\lambda_n+\frac{(\pi_n+k_n^Tx_n)^2}{J_n}
 =O\!\left(\frac{\norm{x_n}^2}{n}+\frac{\pi_n^2}{\mu_n}\right).
\end{equation*}
Choose $1<{\nu_0}<\min\{\nu/2,2\}$. After localization, $z_n$ has uniformly bounded moments of order $2\nu_0$. Tonelli gives
$\sum_{n=1}^{\infty}(\norm{z_n}^2/n)^{\nu_0}<\infty$ almost surely. Since $d_n$ is bounded and $\delta_n=o(1)$ by Lemma~\ref{lem:bounded-predictor}, the same holds for $x_n$. Boundedness of $\pi_n$ and $\mu_{n+1}-\mu_n=\pi_n^2$ give
$\sum_{n=1}^{\infty}(\pi_n^2/\mu_n)^{\nu_0}<\infty$ by integration against $d\mu$. Thus $\sum_{n=1}^{\infty} h_n^{\nu_0}<\infty$.

The pointwise regressor bound and \eqref{eq:mu-log-lower} imply $h_n=O(1/\log n)$. Since ${\nu_0}<2$ and Lemma~\ref{lem:refined-estimates} gives $V_n=O(H_n)$,
\begin{equation*}
 h_n^2V_n=O(H_nh_n^2)=O(h_n^{\nu_0}).
\end{equation*}
The summability of $h_n^{\nu_0}$ therefore gives $\sum_{n=1}^{\infty}h_n^2V_n<\infty$. Lemma~\ref{lem:general-energy} now gives \eqref{eq:energy-limit}. 
\end{proof}

\subsection{Information comparison and score limit}
For $n\ge3$, use the reference component $d_t$ to define a deterministic comparison matrix for $M_n$ by
\[
 \begin{aligned}
 \bar\mu_n&\triangleq E_n+(d-1)\sigma^2\log n,
 &\bar g_n&\triangleq\sum_{t=0}^{n-1}y_{t+1}^*d_t,\\
 \bar G_n&\triangleq n\Omega+\sum_{t=0}^{n-1}d_td_t^T,
 &\bar M_n&\triangleq\begin{pmatrix}\bar\mu_n&\bar g_n^T\\\bar g_n&\bar G_n\end{pmatrix}.
 \end{aligned}
\]
Stack the two scores in \eqref{eq:S} and \eqref{eq:T} as $\mathcal S_n\triangleq(T_n,S_n^T)^T$.

\begin{lemma}\label{lem:score-limit}
Under the conditions of Theorem~\ref{thm:cost}, with a fixed bounded deterministic reference,
\begin{equation}\label{eq:information-comparison}
 \bar M_n\asymp\diag(\bar\mu_n,nI_{d-1}),\qquad
 \bar M_n^{-1/2}M_n\bar M_n^{-1/2}\longrightarrow I_d\as
\end{equation}
and
\begin{equation}\label{eq:score-clt}
 \sigma^{-1}\bar M_n^{-1/2}\mathcal S_n
 \xrightarrow{d}\mathcal N_d(0,I_d).
\end{equation}
\end{lemma}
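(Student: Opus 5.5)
The plan is to compare the blocks of $M_n$ with those of $\bar M_n$ one at a time, and then obtain the score limit from a martingale central limit theorem with a deterministic normalization.

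\textbf{Structure of $\bar M_n$.} I would show $\bar M_n\asymp\diag(\bar\mu_n,nI_{d-1})$ by the trace argument used at the end of Lemma~\ref{lem:bootstrap}.
\begin{enumerate}
\item The $d_t$ are bounded, so $\bar G_n\asymp nI$.
\item The Schur complement satisfies $\bar\mu_n-\bar g_n^T\bar G_n^{-1}\bar g_n\asymp\bar\mu_n$. To see this, note that $\bar g_n^T\bar G_n^{-1}\bar g_n\le \bar g_n^T(\sum d_td_t^T+n\Omega)^{-1}\bar g_n$.
\item Apply Cauchy--Schwarz in the form $v^T\bar g_n=\sum y^*_{t+1}d_t^Tv$. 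This gives $(v^T\bar g_n)^2\le E_n\,v^T\sum d_td_t^T v$. Hence $\bar g_n^T\bar G_n^{-1}\bar g_n\le E_n\cdot\sup_v \frac{v^TDv}{v^T(D+n\Omega)v}$.
\item Reaching a constant strictly less than one is the delicate part. I would instead reuse the deterministic version of the Gram argument in Lemma~\ref{lem:uniform-regression}, namely \eqref{eq:ideal-Gram-lower} with $D_n\theta=(y^*)$. This gives a lower bound $\bar M_n\succeq c\,\diag(E_n,n)$ up to lower order terms. Adding $(d-1)\sigma^2\log n$ to the corner then handles the $\log n$ part.
\end{enumerate}

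\textbf{Convergence $\bar M_n^{-1/2}M_n\bar M_n^{-1/2}\to I_d$.} Given the block structure, it suffices to prove three estimates:
\[
\mu_n-\bar\mu_n=o(\bar\mu_n),\qquad \norm{g_n-\bar g_n}=o(\sqrt{n\bar\mu_n}),\qquad \norm{G_n-\bar G_n}=o(n).
\]
\begin{enumerate}
\item The first estimate is Lemma~\ref{lem:information-asymptotics}.
\item The second follows from Lemma~\ref{lem:cross-block}, which gives an error of order $\sqrt{nH_n\log(e+E_n)}+\sqrt{E_n\log n}$. This is $o(\sqrt{n\bar\mu_n})$ because $\bar\mu_n\ge\log n$ and $H_n\log(e+E_n)=o(\bar\mu_n)$.
\item For the third, write $x_t=z_t+d_t+\delta_t$ and expand $G_n$. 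The term $\sum z_tz_t^T$ equals $n\Omega+o(n)$ by \eqref{eq:noise-estimates}. The cross term $\sum d_tz_t^T$ is $O(\sqrt{nH_n})$ by \eqref{eq:weighted-filter}. The terms involving $\delta_t$ are $O(\sqrt{n\log n})$ by Lemma~\ref{lem:bounded-predictor} and Cauchy--Schwarz.
\end{enumerate}

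\textbf{Score CLT.} The score $\mathcal S_n-\mathcal S_0=\sum_j\psi_jw_{j+1}$ is a martingale with $\psi_j=(\pi_j,x_j^T)^T$ predictable. Its conditional covariance is $\sigma^2(M_n-M_0)$, and by the previous step this is asymptotic to $\sigma^2\bar M_n$ after normalization.
\begin{enumerate}
\item With the deterministic normalization $\bar M_n^{-1/2}$, I would apply the martingale CLT for triangular arrays \cite[Theorem 3.2]{HallHeyde80} to $\sum_j\bar M_n^{-1/2}\psi_jw_{j+1}$, using the Cramér--Wold device.
\item The conditional variance condition is exactly \eqref{eq:information-comparison}.
\item For the conditional Lindeberg condition, use the $\nu$th moment bound after localization. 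The normalized coefficients satisfy $\max_j\norm{\bar M_n^{-1/2}\psi_j}^2\le \max_j(\pi_j^2/\bar\mu_n+\norm{x_j}^2/n)\to0$, because $\pi_j$ is bounded, $\bar\mu_n\to\infty$, and $\max\norm{x_j}^2=O(n^\gamma)$. The Lyapunov sum is then at most this maximum raised to the power $\nu/2-1$, times the trace of the variance, so it vanishes.
\item The initial term $\bar M_n^{-1/2}\mathcal S_0$ tends to $0$.
\end{enumerate}

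\textbf{Main obstacle.} I expect the uniform two-sided comparison for $\bar M_n$ to be the hardest step. It must hold for non-convergent reference correlations, and the Schur complement has to stay comparable to $\bar\mu_n$ when $E_n$ dominates. The planned route is the orthogonality of $\theta$ to $\ph^w$ together with the deterministic Gram lower bound.
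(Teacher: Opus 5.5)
Your proposal is correct in outline. It agrees with the paper on the three block estimates: $\mu_n\sim\bar\mu_n$ from Lemma~\ref{lem:information-asymptotics}, $\norm{g_n-\bar g_n}=o(\sqrt{n\bar\mu_n})$ from Lemma~\ref{lem:cross-block}, and $G_n-\bar G_n=o(n)$ via $x_t=z_t+d_t+\delta_t$. It also agrees on the Cram\'er--Wold martingale CLT, with Lindeberg verified through $\max_t\norm{a_{n,t}}\to0$.

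\textbf{Where your route differs.} The difference is how you obtain $\bar M_n\asymp\diag(\bar\mu_n,nI_{d-1})$, which you call the main obstacle. The paper never proves this directly. Lemma~\ref{lem:bootstrap} together with $\mu_n\sim\bar\mu_n$ already gives $M_n\asymp\diag(\bar\mu_n,nI_{d-1})$. The three block estimates then make $\diag(\bar\mu_n,nI_{d-1})^{-1/2}(M_n-\bar M_n)\diag(\bar\mu_n,nI_{d-1})^{-1/2}\to0$, so the deterministic $\bar M_n$ inherits the two-sided comparison from the random $M_n$.

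Your direct deterministic route also works, and it is easier than you expect. In your step 3, $\sum_t d_td_t^T\preceq CnI$ and $n\Omega\succeq n\lambda_{\min}(\Omega)I$ already bound the supremum by $C/(C+\lambda_{\min}(\Omega))<1$. Alternatively, one has exactly $\bar M_n-\diag((d-1)\sigma^2\log n,0)=L_\theta^T(nQ\Omega Q^T+D_n^TD_n)L_\theta$. The inequality $E_na^2\le2\norm{D_nv}^2+2\norm{D_nQ}^2\norm b^2$ from Lemma~\ref{lem:uniform-regression} then gives the lower bound with no lower-order terms. Your route gives a comparison for $\bar M_n$ that uses no random inputs. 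The paper's route is shorter, because the work was already done for $M_n$.

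\textbf{The gap: square integrability.} Hall and Heyde's martingale CLTs are stated for square-integrable arrays. Here $a_{n,t}=\bar M_n^{-1/2}(\pi_t,x_t^T)^T$ need not have finite second moments. The example in Appendix~\ref{app:transient} has $\E u_2^2=\infty$ (hence infinite second moments for $\pi_2$), and localizing the noise moments does not change this. The paper repairs this by replacing $a_{n,t}$ with $a_{n,t}\ind_{\{\sum_{j=0}^t\norm{a_{n,j}}^2\le2d\}}$. The new coefficient is predictable and its squared norms sum to at most $2d$. The covariance and Lindeberg limits are unchanged, and the truncated row agrees with the original with probability tending to one.

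\textbf{Citation.} The hypotheses you actually verify, conditional variance and conditional Lindeberg, are those of \cite[Corollary 3.1]{HallHeyde80}. Theorem 3.2 there instead requires $\E\max_iX_{ni}^2$ to be bounded in $n$, which you do not verify and which fails in general here. Using the same filtration $\F_t$ in every row supplies the nesting condition.
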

\begin{proof}
All estimates in this proof hold almost surely unless stated otherwise. Lemma~\ref{lem:information-asymptotics} gives $\mu_n\sim\bar\mu_n$. The matrix comparison in Lemma~\ref{lem:bootstrap} therefore gives
\[
 M_n\asymp\diag(\bar\mu_n,nI_{d-1}).
\]

Stability of the reference filter gives $\sum_{t=0}^{n-1}\norm{d_t}^2=O(E_n)$, and boundedness of the reference gives $E_n=O(n)$. Using $x_t=z_t+d_t+\delta_t$ and Lemma~\ref{lem:bounded-predictor}, we have
\[
 \norm{\sum_{t=0}^{n-1}z_td_t^T}=O(\sqrt{nH_n}),\qquad
 \sum_{t=0}^{n-1}\norm{\delta_t}\norm{z_t+d_t}
 =O(\sqrt{n\log n}).
\]
The first bound follows componentwise from \eqref{eq:weighted-filter}, and the second from Cauchy--Schwarz. Together with \eqref{eq:noise-estimates} and $\sum_{t=0}^{n-1}\norm{\delta_t}^2=O(\log n)$, these estimates give $G_n-\bar G_n=o(n)$.

Lemma~\ref{lem:cross-block} gives
\[
 \norm{g_n-\bar g_n}
 =O\!\left(\sqrt{nH_n\log(e+E_n)}+\sqrt{E_n\log n}\right)
 =o(\sqrt{n\bar\mu_n}).
\]
Indeed, the squared terms divided by $n\bar\mu_n$ are respectively $O(H_n^2/\log n)$ and $O(\log n/n)$. Thus all blocks of $M_n-\bar M_n$, normalized by $\diag(\bar\mu_n,nI_{d-1})^{-1/2}$ on both sides, tend to zero. The comparison for $M_n$ above therefore gives $\bar M_n\asymp\diag(\bar\mu_n,nI_{d-1})$. Normalizing the same difference by $\bar M_n^{-1/2}$ on both sides proves the second assertion in \eqref{eq:information-comparison}.

Set $a_{n,t}\triangleq\bar M_n^{-1/2}(\pi_t,x_t^T)^T$ for $0\le t<n$. The coefficients $a_{n,t}$ are $\F_t$-measurable. The conditional variance identity for $w_{t+1}/\sigma$ gives the covariance sum
\[
 \sum_{t=0}^{n-1}a_{n,t}a_{n,t}^T
 =\bar M_n^{-1/2}(M_n-M_0)\bar M_n^{-1/2}
 \longrightarrow I_d.
\]
Moreover, Lemma~\ref{lem:bounded-predictor} and \eqref{eq:pointwise-regressor} imply
\[
 \max_{0\le t<n}\norm{a_{n,t}}^2
 =O\!\left(\frac{\sup_{t\ge0}\pi_t^2}{\bar\mu_n}
          +\frac{\max_{0\le t<n}\norm{x_t}^2}{n}\right)=o(1).
\]
Put $K\triangleq\sup_{t\ge0}\E[|w_{t+1}|^\nu\mid\F_t]$, which is finite almost surely by Assumption~\ref{ass:noise}. For any $\eta>0$, the conditional Lindeberg sum is bounded by
\[
 \begin{aligned}
 &\sum_{t=0}^{n-1}\E\!\left[
 \norm{a_{n,t}w_{t+1}/\sigma}^2
 \ind_{\{\norm{a_{n,t}w_{t+1}/\sigma}>\eta\}}
 \mid\F_t\right]\\
 &\qquad\le\eta^{2-\nu}\sigma^{-\nu}K
 \left(\max_{0\le t<n}\norm{a_{n,t}}\right)^{\nu-2}
 \sum_{t=0}^{n-1}\norm{a_{n,t}}^2=o(1),
 \end{aligned}
\]
because the last sum tends to $d$.

To obtain square integrability, truncate each row before its coefficient energy first exceeds $2d$. Explicitly, replace $a_{n,t}$ by
\[
 a_{n,t}\ind_{\{\sum_{j=0}^t\norm{a_{n,j}}^2\le2d\}}.
\]
This coefficient is predictable, and the sum of its squared norms is at most $2d$. The resulting martingale row is square integrable and differs from the original row with probability tending to zero. Its conditional covariance limit and Lindeberg condition are unchanged. For each fixed scalar projection, apply Hall and Heyde~\cite[Corollary 3.1, pp.~58--59]{HallHeyde80}, using the same filtration $\F_t$ in every row. The Cram\'er--Wold theorem gives the vector normal limit. Finally, $\bar M_n^{-1/2}\mathcal S_0=o(1)$, so the deterministic initial score does not affect \eqref{eq:score-clt}.
\end{proof}

\Needspace{8\baselineskip}

\section{Zero Divisors and Transient Moments}\label{app:boundary}
\subsection{The zero divisor condition}\label{sec:zero_divisor}
A sufficient condition for \eqref{eq:well-defined} is $\widehat b_{1,0}\ne0$ together with the property that, for each $n$, the conditional distribution of $w_{n+1}$ given $\F_n$ is almost surely nonatomic, meaning that it assigns zero mass to every singleton. To verify sufficiency, consider the recursion up to its first zero gain estimate. Put

\[
 c_n\triangleq\frac{e_{p+1}^T P_n\ph_n}{1+\ph_n^T P_n\ph_n}.
\]
On histories with no zero gain through time $n$, the control equation gives $(\theta-\widehat\theta_n)^T\ph_n=\varepsilon_n$. Both $c_n$ and $\varepsilon_n$ are $\F_n$-measurable, and \eqref{eq:rls} gives
\[
 \widehat b_{1,n+1}=\widehat b_{1,n}+c_n(\varepsilon_n+w_{n+1}).
\]
If $c_n=0$, the gain estimate is unchanged. Otherwise, its next value is zero only when $w_{n+1}=-\varepsilon_n-\widehat b_{1,n}/c_n$, a singleton determined by $\F_n$. The conditional nonatomic property gives zero probability to this event. Induction and a countable union establish \eqref{eq:well-defined}. Noise that is independent across time and independent of the reference, with nonatomic marginal distributions, satisfies this property.

The following example gives a zero gain estimate with positive probability under bounded independent noise. Consider
\[
 y_{n+1}=4y_n+u_n+w_{n+1},\qquad y_n^*=0.
\]
Here $p=q=1$, $a_1=-4$ and $b_1=1$. Take zero histories, $P_0=I_2$, and initial estimates $(\widehat a_{1,0},\widehat b_{1,0})=(-1,1)$. The regressor is $(y_n,u_n)^T$ and the control input is $u_n=\widehat a_{1,n}y_n/\widehat b_{1,n}$.
The zero initial histories give $u_0=0$, $y_1=w_1$, and no parameter update at time zero. Hence $u_1=-w_1$, $y_2=3w_1+w_2$, and the regressor at time $1$ is $\ph_1=(w_1,-w_1)^T$. The LS update is
\[
 \binom{-\widehat a_{1,2}}{\widehat b_{1,2}}
 =\binom{1}{1}+\frac{\binom{w_1}{-w_1}}{1+2w_1^2}(3w_1+w_2),
\]
which gives
\[
 \widehat a_{1,2}=-\frac{1+5w_1^2+w_1w_2}{1+2w_1^2},\qquad
 \widehat b_{1,2}=\frac{1-w_1^2-w_1w_2}{1+2w_1^2}.
\]
If the noise is uniform on the three values $-1,0,1$, then the event
$|w_1|=1,w_2=0$ has probability $2/9$. On it,
$\widehat a_{1,2}=-2$, $\widehat b_{1,2}=0$, and $y_2=3w_1\ne0$.
The CE control equation has no solution.

\subsection{Infinite transient moments}\label{app:transient}
For the same plant and initialization, instead take independent
$\operatorname{Unif}[-1,1]$ noise. The conditional noise distributions are nonatomic, so Appendix~\ref{sec:zero_divisor} gives \eqref{eq:well-defined}.

Nevertheless, the input $u_2$ has an infinite second moment. For $w_1\in[3/4,9/10]$, define $\delta\triangleq w_2-w_1^{-1}+w_1$. The expression above for the gain estimate becomes $\widehat b_{1,2}=-w_1\delta/(1+2w_1^2)$, so $\delta=0$ corresponds to a zero gain estimate. On the region $0<\delta<1/10$, substitution into the controller gives
\[
 u_2=\frac{2(1+2w_1^2)^2}{w_1^2\delta}+6w_1+\frac{3}{w_1}+\delta.
\]
The coefficient of $1/\delta$ is at least $16$, and the remaining terms are positive. Throughout this region, the pair $(w_1,w_2)$ lies inside the noise support and $u_2\ge16/\delta$. The joint density of $(w_1,w_2)$ is $1/4$ on $[-1,1]^2$, and the transformation $(w_1,\delta)\mapsto(w_1,w_1^{-1}-w_1+\delta)$ has Jacobian determinant $1$. Hence
\[
 \E|u_2|\ge\frac14\int_{3/4}^{9/10}\int_0^{1/10}
 \frac{16}{\delta}\,d\delta\,dw_1=\infty,
\]
and therefore $\E[u_2^2]=\infty$. On the same region, $y_2=2w_1+w_1^{-1}+\delta>0$, so $\varepsilon_2=4y_2+u_2\ge u_2\ge16/\delta$. The same integral with integrand $256/\delta^2$ gives $\E[\varepsilon_2^2]=\infty$.
Since $R_n\ge\varepsilon_2^2$ for $n\ge3$, we obtain $\E R_n=\infty$ for every such $n$.
The plant satisfies the structural conditions of Theorem~\ref{thm:basic}. Thus
$R_n/n=o(1)$ almost surely, although $\E R_n=\infty$ for every $n\ge3$.

\bibliographystyle{plain}
\bibliography{references}
\end{document}